\newtheorem{lemma}{Lemma}
\newtheorem{theorem}{Theorem}
\newtheorem{remark}{Remark}
\newtheorem{example}{Example}
\newtheorem{procedure}{Procedure}
\newenvironment{proof}{~~\textit{Proof:}}{\hfill$\blacksquare$}
\begin{document}

\begin{center}
\LARGE ``Passive Mechanical Realizations of Bicubic Impedances with No More Than Five Elements for Inerter-Based Control Design'' with the Supplementary Material
\end{center}

\vspace{1.5cm}

\noindent This report includes the original manuscript (pp.~2--40) and the supplementary material (pp.~41--48) of ``Passive Mechanical Realizations of Bicubic Impedances with No More Than Five Elements for Inerter-Based Control Design''.

\vspace{1cm}

\noindent Authors: Kai Wang and Michael Z. Q. Chen

\newpage



\title{Passive Mechanical Realizations of Bicubic Impedances with No More Than Five Elements for Inerter-Based Control Design}
\author{Kai~Wang$^{1}$  ~ and ~Michael~Z.~Q.~Chen$^{2,}$\footnote{Corresponding author:  Michael~Z.~Q.~Chen, mzqchen@outlook.com. \newline
$^{1}$ Key Laboratory of Advanced Process Control for Light Industry (Ministry of Education), School of Internet of Things Engineering, Jiangnan University, Wuxi 214122, P. R. China (e-mail: kaiwang@jiangnan.edu.cn). \newline
$^{2}$ School of Automation, Nanjing University of Science and Technology, Nanjing 210094, P. R. China (e-mail: mzqchen@outlook.com).
\newline
This work was supported by the National Natural Science Foundation of China under grants 61873129 and 61703184.
} }
\date{}
\maketitle

\begin{abstract}
This paper mainly investigates the passive realization problems of bicubic (third-order) impedances as  damper-spring-inerter networks consisting of no more than five elements.
First, the special case where a bicubic impedance contains a pole or a zero on the imaginary axis or at infinity is discussed.
Then, assuming that there is no pole or zero on the imaginary axis or at infinity, the realizations of bicubic impedances as five-element networks are investigated.
Necessary and sufficient conditions for the realizability
as five-element series-parallel networks and as five-element non-series-parallel networks are derived, respectively, where 22 series-parallel configurations and 11 non-series-parallel configurations are presented to cover the conditions. Finally, two numerical examples together with positive-real controller designs for a quarter-car suspension system
are presented for illustrations. The results of this paper can contribute to the synthesis of low-complexity passive mechanical (or electrical) networks, which are motivated by the synthesis and design of inerter-based vibration control systems.

\medskip

\noindent{\em Keywords:} Passive network synthesis, bicubic positive-real impedances, five-element mechanical  networks, inerter-based control
\end{abstract}








\section{Introduction}

As an important branch of system theory,
\emph{passive network synthesis} \cite{AV73,CWC19,You15} is  to
realize   passive systems, described by impedances, admittances, etc., as   electrical (or mechanical) networks consisting of passive elements.
Any network consisting of passive elements must be passive, and the \emph{impedance} of any two-terminal linear  time-invariant passive  network is \emph{positive-real} \cite{AV73}. The impedance is defined as
$Z(s) := V(s)/I(s)$, where $V(s)$ and $I(s)$ are Laplace transforms of port voltage and current, respectively, and a real-rational function $Z(s)$ is defined to be positive-real if $\Re(Z(s)) \geq 0$ for $\Re(s) > 0$
(see \cite{AV73,CWC19}). By the \emph{Bott-Duffin synthesis procedure} \cite{BD49}, any positive-real impedance is realizable as a two-terminal passive network consisting of
resistors, inductors, and capacitors (RLC network).
However, a large number of redundant elements are generated by the synthesis procedure in many cases. So far, the passive realizations of positive-real impedances using the least number of elements have been essential  problems in the field of passive network synthesis, which remain unsolved even for low-order positive-real impedances.

The analogy between passive electrical and mechanical networks has been completed since the invention of \emph{inerters}, where   the resistors, inductors, capacitors, and transformers are analogous to the dampers, springs, inerters, and levers, respectively, through the force-current analogy \cite{Smi02}.  For a two-terminal mechanical network, the   impedance $Z(s)$ (resp. admittance $Y(s)$) is defined to be the ratio of the Laplace transform  of the relative velocity (resp. force) to the Laplace transform  of the force (resp. relative velocity).
Therefore, passive network synthesis can be completely applied to designing passive mechanical circuits, and damper-spring-inerter networks realizing positive-real impedance (or admittance) controllers
have been widely applied to a series of passive or semi-active vibration control systems, such as vibration isolation systems \cite{ACWJ18,JW19,SZ19}, vehicle suspension systems \cite{CLLNSC17,HC18,NSYZDZL19,PS06,SW04,ZJN17}, train suspension systems \cite{WLLSC09}, building vibration   systems \cite{CL19,PRRK19,YS16}, wind turbine towers \cite{WNZ19}, etc. The inerter-based control approach has low cost and high reliability, and introducing inerters can provide better system performances.
Therefore, after designing a suitable positive-real impedance (or admittance) controller by optimizing the system performances, the positive-real function can be further realized as a passive mechanical network consisting of dampers, springs, and inerters, by utilizing the approach of passive network synthesis. Moreover, the realizability conditions in network synthesis can be applied to the optimization of control systems, in order to satisfy the  network complexity constraint (see \cite{CLLNSC17,HC18,ZJN17}). Therefore, it is practically essential to investigate the area of passive network synthesis, especially the minimal realizations of low-order impedances.

Moreover, passive network synthesis can be applied to many other fields, such as
microwave antenna circuit design \cite{LBHD11},    self-assembling circuit design \cite{DGYM20},   supercapacitor model synthesis~\cite{DZHD17},
 acoustics simulation \cite{BHBS16}, biometric image processing \cite{Sae14},
frequency control  \cite{PM19},   positive-real and negative imaginary   systems \cite{Hak20,LX18}, etc.
In recent years, there have been a series of new results on passive network synthesis (see \cite{CWC19,CS09(2),CS09,CWSL13,Hug17,Hug20,JS11,LQ19,ST17,WCH14,WJ19,WC18,WC20,YKKP14,ZJWN17}).  Specifically, Kalman has made an independent call for a renewed attempt in passive network synthesis as an important branch of system theory \cite{Smi17}.

It is essential to investigate the passive  damper-spring-inerter realization problems of low-order impedances using the least number of elements, due to the practical constraints on space, cost, weight, etc., for mechanical  systems.
Many existing works have focused on the realization problems of biquadratic (second-order) impedances \cite{Hug17,JS11,WCH14}.
Recently, some investigations on bicubic (third-order) impedances have been made \cite{CWSL13,Hug20,WJ19,ZJWN17}, where the bicubic impedance is more general and can provide better system performances in vibration control systems with respect to the biquadratic case (see \cite{HC18,ZJN17,WLLSC09}).
The minimal realizations of some specific classes of bicubic positive-real impedances have been investigated in \cite{CWSL13,WJ19,ZJWN17}. The realization results of a bicubic positive-real impedance as a series-parallel network consisting of three energy storage
elements and a finite number of resistors (dampers) have been derived in \cite{Hug20}.
Since a  bicubic positive-real impedance is realizable with no more than 13 elements (resp. 12 elements) by the Bott-Duffin synthesis procedure (resp. \emph{Pantell's modified Bott-Duffin procedure} \cite[Section~2.4]{CWC19}), it is necessary to obtain the realization results of bicubic impedances with no more than $k$ elements for $k = 1, 2, ..., 11$, in order to completely solve the minimal realizations of bicubic impedances. Moreover, the realization results including the realizability conditions and covering configurations can be
applied to the optimization designs of third-order positive-real impedance controllers in  inerter-based vibration systems,
such that the third-order positive-real controller to be obtained can always be realized as a passive damper-spring-inerter network satisfying the required complexity.

This paper is concerned with the realization problem of bicubic impedances as damper-spring-inerter networks containing no more than five elements, which is a critical starting point of solving minimal realizations of bicubic impedances.
First, the realization problems of the bicubic impedances containing a pole or zero on $j \mathbb{R} \cup \infty$ with no more than five elements  are investigated in Section~\ref{sec: Poles or Zeros on Imaginary Axis}.
It is shown that the realization results of the bicubic impedance containing a pole or zero    at the origin $s = 0$   or infinity
$s = \infty$
can be referred to the existing results in \cite{WJ19}, and any bicubic positive-real impedance containing a finite pole or zero on
$j \mathbb{R} \setminus \{0\}$ is realizable as a series-parallel  network containing three energy storage elements and no more than two dampers (Theorem~\ref{theorem: third-degree five-elements special case}). Then, under the assumption that there is no pole or zero on $j \mathbb{R} \cup \infty$, the main realization results of this paper are derived in Section~\ref{sec: main results}, where it can be proved that the least number of elements for realizations is five.
A necessary and sufficient condition is derived  for the realizability of such a bicubic impedance as a five-element series-parallel network (Theorem~\ref{theorem: third-degree five-element series-parallel network}), by obtaining 22 covering configurations classified into six quartets (Figs.~\ref{fig: N1}--\ref{fig: N6})
and investigating their realizability conditions. Furthermore, a necessary and sufficient condition is derived for such a bicubic impedance to be realizable as a five-element non-series-parallel network (Theorem~\ref{theorem: third-degree five-element non-series-parallel network}), by obtaining  11 covering configurations classified into five quartets (Figs.~\ref{fig: N7}--\ref{fig: N11}) and investigating their realizability conditions. Finally, two numerical examples together with positive-real controller designs for a quarter-car suspension system  are presented for illustrations in  Section~\ref{sec: examples}, where it is shown  that the third-order positive-real controller    realizable as a five-element network using the results of this paper can provide better ride comfort performances than the second-order positive-real controller $K(s)$  realizable as a series-parallel (resp. non-series-parallel) network containing no more than nine (resp. eight) elements.

The contributions of this paper are summarized as follows. Since the least number of elements to realize the bicubic impedance with positive coefficients is five,
the methodology and realization results of this paper can provide the guidance on investigating the realizations   of bicubic impedances as passive networks with higher complexity, in order to finally solve the minimal realization problems of positive-real bicubic impedances.
The realizability conditions and the network element values in explicit forms are derived in this paper, which makes it more convenient
to obtain the network realizations compared with the classical Bott-Duffin synthesis procedure.
Moreover, the realization results in this paper can be utilized in the optimization design of positive-real controllers realizable as five-element damper-spring-inerter networks in many vibration systems.
As illustrated in this paper, for the quarter-car suspension systems, the optimal positive-real controller in the bicubic  form realizable as five-element networks can provide  both lower
physical complexity and better ride comfort performances than the optimal
positive-real controller in the biquadratic form.
The numerical examples also show that using the five-element realization results in this paper,
the positive-real bicubic impedance satisfying the corresponding realizability conditions  can be realized with much fewer elements than the Bott-Duffin realizations.

The  networks in this paper are assumed to be two-terminal linear  time-invariant passive damper-spring-inerter networks, whose element values are positive and finite. The realization results can be directly applied to those of electrical RLC networks based on the analogy between passive electrical and mechanical systems (see \cite{Smi02}). For the brevity of this paper, the detailed proofs of some results can be referred to the supplementary material \cite{WC_sup}.

\section{Problem Formulation} \label{sec: problem formulation}

A \emph{bicubic impedance} $Z(s)$ is a real-rational impedance function whose \emph{McMillan degree}\footnote{For any real-rational function $Z(s) = a(s)/d(s)$ with polynomials $a(s)$ and $d(s)$ being coprime, the McMillan degree (or called degree) of $Z(s)$ is equal to the maximum degree of $a(s)$ and $d(s)$, that is, $\delta(Z(s)) = \max\{\deg(a(s)), \deg(d(s)) \}$ \cite[Section~3.6]{AV73}.} is three, which is denoted as $\delta(Z(s)) = 3$.
The general form of a bicubic impedance can be expressed as
\begin{equation}  \label{eq: three-degree impedance}
Z(s) = \frac{a_3 s^3 + a_2 s^2 + a_1 s + a_0}{d_3 s^3 + d_2 s^2 + d_1 s + d_0} =: \frac{a(s)}{d(s)},
\end{equation}
where $a_i, d_j \geq 0$ for $i, j = 0, 1, 2, 3$, and there is no common factor between $a(s)$ and $d(s)$.
A necessary and sufficient condition for a  bicubic impedance  to be positive-real has been presented in  \cite[Theorem~13]{CS09(2)}, which is shown as follows.




\begin{lemma} \cite{CS09(2)} \label{lemma: bicubic positive-real}
{Consider a third-degree impedance $Z(s)$ in the form of \eqref{eq: three-degree impedance}, where $\delta(Z(s)) = 3$ and $a_i, d_j \geq 0$ for $i, j = 0, 1, 2, 3$. Then, $Z(s)$ is positive real, if and only if $(a_1+d_1)(a_2+d_2)\geq (a_0+d_0)(a_3+d_3)$, and one of the following conditions holds with $f_0 := a_0 d_0$, $f_1 := a_1d_1 - a_0d_2 - a_2d_0$, $f_2 := a_2d_2 - a_1d_3 - a_3d_1$, and $f_3 := a_3d_3$:

(a) $f_3 = 0$, $f_2 \geq 0$, $f_0 \geq 0$, and $-f_1 \leq 2\sqrt{f_0f_2}$;

(b) $f_3 > 0$, $f_0 \geq 0$, and (b1) or (b2) holds: (b1) $f_1 \geq 0$ and $-f_2 \leq \sqrt{3f_1f_3}$; (b2) $f_2^2 > 3f_1f_3$ and $2f_2^3 - 9f_1 f_2 f_3 + 27f_0f_3^2 \geq 2(f_2^2 - 3f_1f_3)^{3/2}$.  }
\end{lemma}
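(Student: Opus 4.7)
My plan is to invoke the standard frequency-domain characterization of positive-realness: with $a(s)+d(s)$ a cubic having non-negative coefficients, $Z(s) = a(s)/d(s)$ in lowest terms is positive-real if and only if (i) $a(s)+d(s)$ is Hurwitz in the weak sense (no zeros in the open right-half plane, any zeros on $j\mathbb{R}$ being simple), and (ii) $\Re Z(j\omega) \geq 0$ at every real $\omega$ where $Z(j\omega)$ is finite. Each of (i), (ii) will then be translated into explicit inequalities on the coefficients $a_i, d_j$.

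For (i), since $a(s)+d(s) = (a_3+d_3)s^3 + (a_2+d_2)s^2 + (a_1+d_1)s + (a_0+d_0)$ already has non-negative coefficients, the Routh--Hurwitz criterion for a cubic reduces to the single inequality $(a_1+d_1)(a_2+d_2) \geq (a_0+d_0)(a_3+d_3)$, which is exactly the first stated inequality. For (ii), I would compute $a(j\omega)\overline{d(j\omega)} + \overline{a(j\omega)}d(j\omega)$ by splitting $a(j\omega) = (a_0 - a_2\omega^2) + j\omega(a_1 - a_3\omega^2)$ (and similarly for $d$), grouping even powers of $\omega$, and setting $x = \omega^2$; this yields
\[
\Re Z(j\omega) \;=\; \frac{R(\omega^2)}{|d(j\omega)|^2}, \qquad R(x) \;:=\; f_0 + f_1 x + f_2 x^2 + f_3 x^3,
\]
so that condition (ii) is equivalent to $R(x) \geq 0$ on $[0,\infty)$.

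It then remains to characterize non-negativity of $R$ on $[0,\infty)$, which I would split by the degree. If $f_3 = 0$, then $R$ is at most quadratic; $R(0) = f_0 \geq 0$ is necessary, and unless $f_2 \geq 0$ we would have $R(x) \to -\infty$. For the resulting upward parabola, non-negativity on $[0,\infty)$ amounts to either a non-positive discriminant or a non-positive vertex location, uniformly encoded as $-f_1 \leq 2\sqrt{f_0 f_2}$, which is case (a). If $f_3 > 0$, then $R(+\infty) = +\infty$ and non-negativity is controlled by the values of $R$ at its critical points. Since $R'(x) = 3 f_3 x^2 + 2 f_2 x + f_1$ has discriminant $4(f_2^2 - 3 f_1 f_3)$, case (b1) captures the regime in which $R$ is monotone non-decreasing on $[0,\infty)$ (either $R'$ has no real roots, or both roots are non-positive), so that $R \geq R(0) = f_0 \geq 0$ suffices. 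Case (b2) captures the complementary regime in which the local minimum $x_\ast = (-f_2 + \sqrt{f_2^2 - 3 f_1 f_3})/(3 f_3)$ lies in $(0,\infty)$; direct substitution into $R$ then reduces $R(x_\ast) \geq 0$ to the stated inequality $2 f_2^3 - 9 f_1 f_2 f_3 + 27 f_0 f_3^2 \geq 2(f_2^2 - 3 f_1 f_3)^{3/2}$.

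The main obstacle I anticipate is the algebraic reduction of $R(x_\ast) \geq 0$ to this cubic-discriminant form in case (b2), together with the careful bookkeeping needed to verify that (b1) and (b2) together cover exactly $\{R \geq 0 \text{ on } [0,\infty)\}$ with no gaps and no spurious overlap. The boundary behavior also needs attention: equality in the Hurwitz inequality of (i) produces $j\mathbb{R}$-axis poles of $Z$, whose residue positivity has to be reconciled with (ii) and with the corresponding boundary case of $R(x) \geq 0$ (typically a double root of $R$ on $[0,\infty)$).
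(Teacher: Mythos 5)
The paper does not prove this lemma at all: it is imported verbatim from the cited reference (Chen and Smith, \emph{A note on tests for positive-real functions}, 2009), so there is no in-paper proof to compare against. On its own merits your outline is correct and is, as far as I can tell, the same route the cited reference takes. The computation of the even part checks out: writing $a(j\omega)=(a_0-a_2\omega^2)+j\omega(a_1-a_3\omega^2)$ and similarly for $d$ gives exactly $\Re\bigl(a(j\omega)\overline{d(j\omega)}\bigr)=f_0+f_1x+f_2x^2+f_3x^3$ with $x=\omega^2$ and the stated $f_i$; the quadratic case reproduces (a); and the identity $27f_3^2R(x_\ast)=2f_2^3-9f_1f_2f_3+27f_0f_3^2-2(f_2^2-3f_1f_3)^{3/2}$ at the larger critical point of $R'$ reproduces (b2), with (b1) exactly characterizing $R'\ge 0$ on $[0,\infty)$, so the disjunction covers $\{R\ge 0 \text{ on } [0,\infty)\}$ without gaps even though (b1) and (b2) overlap. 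The one step you should not leave as a bare assertion is the opening equivalence ``$Z$ PR iff $a+d$ Hurwitz and $\Re Z(j\omega)\ge 0$'': it is a genuine theorem (most cleanly obtained via the Cayley transform $(a-d)/(a+d)$ and bounded-realness) because it silently disposes of the residue conditions at imaginary-axis poles of $Z$; cite it or prove it. Your remark about the boundary case is also resolvable: under coprimality, equality in $(a_1+d_1)(a_2+d_2)\ge(a_0+d_0)(a_3+d_3)$ forces a root of $a+d$ at some $j\omega_0\ne 0$, which together with $R(\omega_0^2)\ge 0$ contradicts coprimality, so that corner of the condition is vacuous and causes no inconsistency.
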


By applying the \emph{Bott-Duffin synthesis procedure} \cite{BD49} (resp. \emph{Pantell's modified Bott-Duffin procedure} \cite[Section~2.4]{CWC19}), any positive-real bicubic impedance $Z(s)$ in the form of \eqref{eq: three-degree impedance} is realizable as a series-parallel (resp. non-series-parallel) damper-spring-inerter network containing no more than
13 elements (resp. 12 elements).  To
simplify the complexity of mechanical network realizations, it is essential to investigate the restricted-complexity
realization problems of bicubic impedances  as  damper-spring-inerter networks.

This paper aims to derive necessary and sufficient conditions for a bicubic impedance $Z(s)$ in the form of \eqref{eq: three-degree impedance} to be realizable as
a  damper-spring-inerter network containing no more than five elements, and to present the realization configurations  to cover  the conditions.

\section{Notations and Preliminaries} \label{sec: notations}

This section will introduce the notations utilized in the remaining part of this paper.

Following the definition in \cite[Definition~8.24]{Fuh12}, the \emph{Bezoutian matrix} $\mathcal{B}(a,d)$ of two third-degree polynomials $a(s)$ and $d(s)$ in \eqref{eq: three-degree impedance} is a real symmetric $3\times 3$ matrix whose entries $\mathcal{B}_{ij}$ for $i, j = 1, 2, 3$ satisfy
\[
\frac{a(z)d(w)-d(z)a(w)}{z-w} =  \sum_{i=1}^{3}\sum_{j=1}^{3} \mathcal{B}_{ij}z^{i-1}w^{j-1}.
\]
Then, the following notations are introduced as follows:
\begin{equation*}
\begin{split}
\mathcal{B}_{11} := a_1d_0 - a_0d_1, ~~ \mathcal{B}_{12} := a_2d_0-a_0d_2, ~~ \mathcal{B}_{13} := a_3d_0 - a_0d_3,  \\
\mathcal{B}_{22} := \mathcal{B}_{13} + a_2 d_1 - a_1d_2, ~~ \mathcal{B}_{23} := a_3d_1 - a_1d_3, ~~ \mathcal{B}_{33} := a_3d_2 - a_2d_3.
\end{split}
\end{equation*}
Moreover, one denotes
\begin{equation*}
\begin{split}
\Delta_1 := a_1a_2 - a_0a_3, ~~ \Delta_2 := d_1 d_2 - d_0 d_3, ~~ \mathcal{M}_{11} := a_1d_0 + a_0d_1, \\ \mathcal{M}_{33} := a_3d_2 + a_2d_3,  ~~
\mathcal{M}_{12} := a_2d_0 + a_0d_2, ~~ \mathcal{M}_{23} := a_3d_1 + a_1d_3, ~~ \mathcal{M}_{13} := a_3d_0 + a_0d_3.
\end{split}
\end{equation*}

Consider any two-terminal damper-spring-inerter network $N$ whose two terminals are labeled as $a$ and $a'$.  A linear graph whose edges correspond to all the elements of $N$ is called the \emph{network graph} \cite{CWSL13}, \cite[pg.~28]{CWC19}.  Then, let $\mathcal{P}(a,a')$ denote the \emph{path} \cite[pg.~14]{SR61} whose  \emph{terminal vertices} \cite[pg.~14]{SR61} are $a$ and $a'$, and let $\mathcal{C}(a,a')$ denote the \emph{cut-set} \cite[pg.~28]{SR61} that separates the network graph into two connected subgraphs containing terminals $a$ and $a'$, respectively.
Furthermore, a path $\mathcal{P}(a,a')$   whose all edges correspond to springs (resp. inerters) is denoted as $k$-$\mathcal{P}(a,a')$ (resp. $b$-$\mathcal{P}(a,a')$);  a cut-set $\mathcal{C}(a,a')$  whose all edges correspond to springs (resp. inerters) is denoted as $k$-$\mathcal{C}(a,a')$ (resp. $b$-$\mathcal{C}(a,a')$).

In addition to network graphs, any two-terminal damper-spring-inerter network $N$ can be also described by a \emph{one-terminal-pair labeled graph} $\mathcal{N}$
(see \cite{WC20}, \cite[pg.~14]{SR61}), where each label designate a passive element regardless of the element value. The dampers, springs, and inerters are labeled as $c_i$, $k_i$, and $b_i$, respectively.

The  notations of the maps acting on the labeled graph are  as follows:\footnote{Such an approach of defining the notations $\text{GDu}$, $\text{Inv}$, and $\text{Dual}$ was suggested by Professor Rudolf E. Kalman (see \cite[Section~2.7]{CWC19}).}
\begin{itemize}
  \item[1.] $\text{GDu} :=$ Graph duality, which takes the graph into its dual (see \cite[Definition~3-12]{SR61}) without changing the labels.
  \item[2.] $\text{Inv} :=$ Inversion, which preserves the graph but interchanges the labels of springs $k_i$ and inerters $b_i$, that is, springs to inerters and inerters to springs, with their labels $k_i$ to $b_i$ and $b_i$ to $k_i$.
  \item[3.] $\text{Dual} :=$ Network duality of one-terminal-pair labeled graph $:= \text{GDu} \circ \text{Inv} = \text{Inv} \circ  \text{GDu}$.
\end{itemize}

An example to illustrate $\text{GDu}$, $\text{Inv}$, and $\text{Dual}$ can be referred to the four configurations in Fig.~\ref{fig: N2}. Their one-terminal-pair labeled graphs can be  denoted  as $\mathcal{N}_{2a}$,  $\mathcal{N}_{2b}$,  $\mathcal{N}_{2c}$, and  $\mathcal{N}_{2d}$, respectively, satisfying  $\mathcal{N}_{2b} = \text{Dual}(\mathcal{N}_{2a})$, $\mathcal{N}_{2c} = \text{Inv}(\mathcal{N}_{2a})$, and $\mathcal{N}_{2d} = \text{GDu}(\mathcal{N}_{2a})$.

As shown in \cite[Section~2.7]{CWC19}, \cite{WC20},    $Z(s)$ is realizable as the impedance of a network whose one-terminal-pair labeled graph is $\mathcal{N}$, if and only if $Z(s^{-1})$ is realizable as the impedance of a network whose one-terminal-pair labeled graph is $\text{Inv}(\mathcal{N})$ (\emph{principle of frequency inversion}), if and only if
$Z(s^{-1})$ is realizable as the admittance of a network whose one-terminal-pair labeled graph is $\text{GDu}(\mathcal{N})$ (\emph{principle of frequency-inverse duality}), if and only if
$Z(s)$ is realizable as the admittance of a network whose one-terminal-pair labeled graph is $\text{Dual}(\mathcal{N})$ (\emph{principle of duality}).

\section{Impedances With Poles or Zeros on Imaginary Axis or at Infinity} \label{sec: Poles or Zeros on Imaginary Axis}

\subsection{Pole or Zero at Origin or Infinity}

For the case when a bicubic impedance  $Z(s)$ in the form  \eqref{eq: three-degree impedance}, where  $a_i, d_j \geq 0$ for $i, j = 0, 1, 2, 3$, contains a zero at $s = 0$ (origin),  it is clear that $a_0 = 0$.
Then, the realization results of $Z(s)$ with no more than five elements have been presented in \cite{WJ19}.

Moreover,   the realization results for the case when $Z(s)$ contains a zero at $s = \infty$ can be directly obtained through $a_k \leftrightarrow a_{3-k}$  and $d_k \leftrightarrow d_{3-k}$ for $k = 0, 1$ (the \emph{principle of frequency inversion} \cite[Section~2.7]{CWC19}, \cite{WC20});   the realization results for the case when $Z(s)$ contains a pole at $s = 0$ can be directly obtained through $a_k \leftrightarrow d_k$ for $k = 0, 1, 2, 3$ (the \emph{principle of duality}~\cite[Section~2.7]{CWC19}, \cite{WC20});  the  realization results for the case when $Z(s)$ contains a pole at $s = \infty$ can be directly obtained through $a_k \leftrightarrow d_{3-k}$ for $k = 0, 1, 2, 3$ (the \emph{principle of frequency-inverse duality}~\cite[Section~2.7]{CWC19}, \cite{WC20}).

\subsection{Non-Zero Finite Pole or Zero on Imaginary Axis}

Consider a  bicubic impedance $Z(s)$ in the form of \eqref{eq: three-degree impedance}, where  $a_i, d_j \geq 0$ for $i, j = 0, 1, 2, 3$, and $\delta(Z(s)) = 3$.
The following theorem (Theorem~\ref{theorem: third-degree five-elements special case}) presents the realization results for the case when $Z(s)$ contains a finite pole or zero on $j \mathbb{R} \setminus \{0\}$.

\begin{theorem} \label{theorem: third-degree five-elements special case}
Consider a bicubic positive-real impedance $Z(s)$ in the form of \eqref{eq: three-degree impedance}, where  $a_i, d_j \geq 0$ for $i, j = 0, 1, 2, 3$, and  $\delta(Z(s)) = 3$. If $Z(s)$ contains a finite pole or zero on $j \mathbb{R} \setminus \{0\}$, then $Z(s)$ is realizable as a
series-parallel network containing three energy storage elements and no more than two dampers.
\end{theorem}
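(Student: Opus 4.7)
The plan is a Foster-type extraction of the imaginary-axis singularity, followed by a direct realization of a degree-one positive-real remainder. Suppose first that $Z(s)$ has a pole at $s = j\omega_0$ with $\omega_0 > 0$. Reality of the coefficients forces a conjugate pole at $-j\omega_0$, and positive-realness forces the residue at $j\omega_0$ to be a nonnegative real number, so partial-fraction expansion yields
\[
Z(s) = \frac{k s}{s^2 + \omega_0^2} + Z_1(s),
\]
where $k > 0$ and $Z_1(s)$ is positive-real of degree exactly one (the extracted term carries the whole pole pair, and $\delta(Z) = 3$).

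I would then realize each summand separately. The first term is the impedance of a spring of stiffness $K = \omega_0^2/k$ in parallel with an inerter of inertance $B = 1/k$, contributing two energy-storage elements and no damper. For the degree-one remainder $Z_1(s) = (\alpha s + \beta)/(\gamma s + \delta)$ with all coefficients nonnegative, I split into cases: if any of $\alpha, \beta, \gamma, \delta$ vanishes then $Z_1$ is already a two-element series or parallel combination of a damper with a single spring or inerter; otherwise the standard Foster first form (applied either to $Z_1$ or to $1/Z_1$, depending on the sign of $\alpha\delta - \beta\gamma$) realizes $Z_1$ as a three-element series-parallel network consisting of a single spring or inerter together with two dampers. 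Appending this subnetwork in series with the parallel spring-inerter tank preserves series-parallelness and yields at most three energy-storage elements and at most two dampers, matching the bound in the theorem.

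The zero case reduces to the pole case via duality: if $Z(s)$ has a zero on $j\mathbb{R}\setminus\{0\}$, then $Y(s) := 1/Z(s)$ has a pole there and is positive-real of degree three. Applying the above construction to $Y(s)$ produces a series-parallel network $\mathcal{N}$ whose impedance equals $Y(s)$. By the principle of duality from Section~\ref{sec: notations}, the dual network $\text{Dual}(\mathcal{N})$ then realizes $Z(s) = 1/Y(s)$ as an impedance; the element counts are preserved because duality fixes the number of dampers, swaps springs with inerters, and preserves series-parallelness.

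The step that requires the most care is the degree-one subcase in which $Z_1$ has no singularity at $s = 0$, at $s = \infty$, or on $j\mathbb{R}$: this is precisely when the three-element Foster realization is forced and the second damper appears. It is also where positive-realness of $Z_1$ (inherited from $Z$) must be used to check that the Foster element values come out positive, and it is what makes the bound of two dampers in the statement tight rather than one.
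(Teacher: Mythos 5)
Your proposal is correct and follows essentially the same route as the paper: extract the imaginary-axis pole pair (with positive residue, guaranteed by positive-realness) as a parallel spring--inerter tank, realize the remaining degree-one positive-real function with one storage element and at most two dampers, connect in series, and handle the zero case by duality. The only cosmetic difference is that you realize the first-order remainder directly via Foster forms with a case split, whereas the paper simply invokes the known biquadratic/low-order results of Wang--Chen--Hu for that step.
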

\begin{proof}
Assuming that $Z(s)$ contains a finite pole on $j \mathbb{R} \setminus \{0\}$,  the impedance can be expressed as
\begin{equation} \label{eq: finite pole on imaginary axis}
Z(s) = \frac{a_3 s^3 + a_2 s^2 + a_1 s + a_0}{(s^2 + \omega_1^2)(d_3 s+d_0/\omega_1^2)},
\end{equation}
where $\omega_1 > 0$. If $Z(s)$ is positive-real, then based on \cite[pg.~13]{CWC19}, it follows from \eqref{eq: finite pole on imaginary axis} that
\begin{equation} \label{eq: finite pole on imaginary axis 02}
Z(s) = \frac{2K_1 s}{s^2 + \omega_1^2} + \frac{c_1 s + c_0}{d_3 s + d_0/\omega_1^2} =: Z_1(s) + Z_2(s),
\end{equation}
where $K_1 > 0$, $c_0 \geq 0$, and $c_1 \geq 0$. It is clear that $Z_1(s)$ in \eqref{eq: finite pole on imaginary axis 02}
is realizable as the parallel connection of a spring and an inerter.
Based on the results in \cite{WCH14}, $Z_2(s)$ in \eqref{eq: finite pole on imaginary axis 02} is realizable as a series-parallel subnetwork containing one energy storage element and no more than two dampers. Therefore, $Z(s)$  is realizable as a  series-parallel network containing three energy storage elements and no more than two dampers. Together with the principle of duality ($a_k \leftrightarrow d_k$ for $k = 0, 1, 2, 3$), a similar discussion can be applied to the case when $Z(s)$ contains a finite zero on $j \mathbb{R} \setminus \{0\}$.
\end{proof}

\begin{remark}
It can be derived that a bicubic impedance $Z(s)$ in the form of \eqref{eq: three-degree impedance}, where $a_i, d_j > 0$ for $i, j = 0, 1, 2, 3$,
contains a finite pole (resp. zero) on $j \mathbb{R} \setminus \{0\}$ if and only if $\Delta_2 = 0$ (resp. $\Delta_1  = 0$).
\end{remark}

\section{Main Results} \label{sec: main results}

This section will investigate the realizations of a bicubic impedance $Z(s)$
in the form of \eqref{eq: three-degree impedance}  without any pole or zero on $j \mathbb{R} \cup  \infty$. Then, it is implied that $a_i, d_j > 0$ for $i, j = 0, 1, 2, 3$.

\subsection{Basic Lemmas}

The following two lemmas (Lemmas~\ref{lemma: graph constraint} and \ref{lemma: lossless passive networks}) present the topological restrictions of the network realizations of bicubic impedances.

\begin{lemma} \cite[Theorem 2]{Ses59} \label{lemma: graph constraint}
Consider a bicubic impedance $Z(s)$ in the form of \eqref{eq: three-degree impedance}, where $a_i, d_j > 0$ for $i, j = 0, 1, 2, 3$, $\delta(Z(s)) = 3$, and there is not any pole or zero on $j \mathbb{R} \cup  \infty$.
Then, the network graph of any network realizing $Z(s)$ cannot contain any of $k$-$\mathcal{P}(a,a')$, $b$-$\mathcal{P}(a,a')$, $k$-$\mathcal{C}(a,a')$, or $b$-$\mathcal{C}(a,a')$.
\end{lemma}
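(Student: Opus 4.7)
The plan is to argue by contradiction, handling the four topological features separately and in each case extracting a forbidden pole or zero of $Z(s)$ by reducing the network in a suitable frequency limit. The relevant element impedances are $Z_k(s)=s/k$, $Z_b(s)=1/(bs)$, and $Z_c(s)=1/c$, so as $s \to 0$ springs degenerate to short circuits and inerters to open circuits, while the roles reverse as $s \to \infty$, and dampers stay bounded in both limits. Under the hypothesis that $Z$ has no pole or zero on $j\mathbb{R} \cup \infty$, the values $Z(0)$ and $Z(\infty)$ are both finite and nonzero, which will provide the contradictions.

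First I would handle the path obstructions. Suppose the network graph contains a $k$-$\mathcal{P}(a,a')$. In the limit $s \to 0$ every spring on that path becomes a short circuit, so terminals $a$ and $a'$ are joined by a chain of shorts in the limit network and the limiting port impedance is zero. By continuity of the port impedance as a rational function of $s$, this forces $Z(0) = 0$, a zero of $Z$ on $j\mathbb{R}$, contradicting the hypothesis. The $b$-$\mathcal{P}(a,a')$ case is identical but at $s \to \infty$, producing $Z(\infty) = 0$, i.e., a zero at infinity.

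The cut-set obstructions follow from the dual observation. A $k$-$\mathcal{C}(a,a')$ becomes a cut-set of open circuits in the $s \to \infty$ limit network, separating $a$ from $a'$, so the port impedance diverges and $Z(\infty) = \infty$, a pole at infinity. Symmetrically, a $b$-$\mathcal{C}(a,a')$ becomes an all-open cut-set in the $s \to 0$ limit, forcing $Z(0) = \infty$, a pole at the origin. Each of the four conclusions contradicts the assumption that $Z$ has no pole or zero on $j\mathbb{R} \cup \infty$, so none of the four features can appear in the network graph.

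The one step deserving care — and the main conceptual obstacle — is justifying the \emph{limit-network reduction}: that the port impedance of the network obtained by substituting each element's impedance at $s=s_0$ really coincides with $\lim_{s\to s_0} Z(s)$. This is standard, reflecting that the port impedance is a rational function of the element impedances, but one must verify that the substitution does not produce a degenerate configuration (for example, a short placed directly across an open). Under the hypothesis that $Z$ has no pole or zero at $s_0 \in \{0,\infty\}$, a short-across-open configuration in the substituted graph would itself force a zero or pole at $s_0$, so no such degeneracy can occur, and the limit-network argument is valid in each of the four cases.
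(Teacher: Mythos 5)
The paper does not prove this lemma at all: it is imported verbatim as \cite[Theorem~2]{Ses59}, so there is no in-paper argument to compare against. Your proof is the standard (and essentially Seshu's) argument, and the four case analyses are correct: with $Z_k=s/k$, $Z_b=1/(bs)$, $Z_c=1/c$, a spring path forces $Z(0)=0$, an inerter path forces $Z(\infty)=0$, a spring cut-set forces $Z(\infty)=\infty$, and an inerter cut-set forces $Z(0)=\infty$, each contradicting the hypothesis.

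The only soft spot is your final paragraph. The genuine issue with the limit-network step is not a ``short placed directly across an open'' (note, incidentally, that a $k$-$\mathcal{P}(a,a')$ and a $b$-$\mathcal{C}(a,a')$ cannot coexist, since a cut-set separating $a$ from $a'$ must meet every $a$--$a'$ path, and their edge sets are disjoint by type); it is whether $\lim_{s\to s_0}Z(s)$ is an indeterminate form $0/0$ or $\infty/\infty$ when you substitute the limiting element impedances. The clean way to close this is the topological (tree/two-tree) formula $Z=\bigl(\sum_{T_2}\prod_{e\in T_2}y_e\bigr)/\bigl(\sum_{T}\prod_{e\in T}y_e\bigr)$, where $T$ ranges over spanning trees and $T_2$ over two-trees separating $a$ and $a'$: since all element values are positive, the dominant powers of $s$ in numerator and denominator cannot cancel, and the order of $Z$ at $s=0$ (resp.\ $\infty$) is determined by the extremal value of $\#\{\text{springs}\}-\#\{\text{inerters}\}$ over trees versus two-trees. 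For instance, if a spring path $P$ joins $a$ to $a'$, every separating two-tree $T_2$ misses some edge $e\in P$ crossing the cut, and $T_2\cup\{e\}$ is a spanning tree with one more spring, which forces the denominator to dominate as $s\to 0$ and hence $Z(0)=0$; the other three cases are analogous. With that substitution your argument is complete.
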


\begin{lemma} \label{lemma: lossless passive networks}
Consider a bicubic impedance $Z(s)$ in the form of \eqref{eq: three-degree impedance}, where $a_i, d_j > 0$ for $i, j = 0, 1, 2, 3$, $\delta(Z(s)) = 3$, and there is not any pole or zero on $j \mathbb{R} \cup  \infty$. Then, $Z(s)$ cannot be realized as the series or parallel connection of a lossless subnetwork\footnote{A lossless network only contains energy storage elements (springs or inerters).} and a general passive subnetwork.
\end{lemma}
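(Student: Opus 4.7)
The plan is to argue by contradiction using the residue properties of positive-real functions at imaginary-axis poles. Suppose first that there is a series decomposition $Z(s) = Z_L(s) + Z_P(s)$, in which $Z_L$ is the impedance of a non-trivial lossless subnetwork (only springs and inerters, with at least one element) and $Z_P$ is the impedance of a passive subnetwork, so that $Z_P$ is positive-real. Any non-constant reactive impedance is a Foster function with all poles and zeros lying on $j\mathbb{R} \cup \{\infty\}$, and any non-empty lossless subnetwork produces at least one such pole (a single spring contributes a pole at $s=0$, a single inerter a pole at $s=\infty$, a resonant spring--inerter block a finite imaginary pole, and so on). Pick such a pole $s_0 \in j\mathbb{R} \cup \{\infty\}$ of $Z_L$ and denote its residue by $k > 0$.

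First I would observe that, since by hypothesis $Z(s)$ has no pole on $j\mathbb{R} \cup \{\infty\}$, the pole of $Z_L$ at $s_0$ must be cancelled within the sum $Z_L + Z_P$; hence $Z_P$ must have a pole at the same point $s_0$ whose residue equals $-k$. I would then invoke the classical fact from positive-real function theory that every pole of a positive-real function on $j\mathbb{R} \cup \{\infty\}$ is simple and carries a strictly positive real residue. This forces the residue of $Z_P$ at $s_0$ to be non-negative, contradicting $-k < 0$, and thereby rules out the series decomposition.

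For the parallel case I would dualize. A parallel connection of a lossless subnetwork and a passive subnetwork gives $Y(s) := 1/Z(s) = Y_L(s) + Y_P(s)$, where $Y_L$ is again a reactive (Foster) admittance and $Y_P$ is positive-real; and the hypothesis that $Z$ has no zero on $j\mathbb{R} \cup \{\infty\}$ is equivalent to $Y$ having no pole there. The identical residue-cancellation argument then produces a contradiction. Alternatively, this parallel statement follows from the series one via the principle of duality recalled in Section~\ref{sec: notations}.

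The main difficulty, I expect, is not the residue calculation but being precise about the two auxiliary facts that drive it: that every non-trivial lossless two-terminal network has at least one impedance pole on $j\mathbb{R} \cup \{\infty\}$ (a consequence of the Foster canonical form for reactive impedances), and that positive-real functions admit only simple imaginary-axis poles with positive real residues. Both are standard and I would cite them from \cite{AV73} rather than reprove. One should also dismiss the degenerate possibility that the lossless subnetwork reduces to a short or an open, in which case the ``decomposition'' is trivial and not the intended non-trivial series or parallel partition.
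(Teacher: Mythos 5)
Your proposal is correct and takes essentially the same route as the paper: the paper's proof simply asserts that the lossless subnetwork forces $Z(s)$ to have a pole or zero on $j\mathbb{R}\cup\infty$, contradicting the hypothesis, and your residue-cancellation argument (positive residues of positive-real functions at imaginary-axis poles, plus the Foster form of reactance functions, with duality handling the parallel case) is just the standard justification of that one-line claim.
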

\begin{proof}
It can be verified that the impedance $Z(s)$ must contain at least one pole or zero on $j \mathbb{R} \cup  \infty$ due to the lossless subnetwork, which contradicts the assumption.
\end{proof}

The following lemma (Lemma~\ref{lemma: number of elements}) presents the least number of elements and energy storage elements needed to realize a bicubic impedance.

\begin{lemma} \label{lemma: number of elements}
Consider a bicubic impedance $Z(s)$ in the form of \eqref{eq: three-degree impedance}, where $a_i, d_j > 0$ for $i, j = 0, 1, 2, 3$, $\delta(Z(s)) = 3$, and there is not any pole or zero on $j \mathbb{R} \cup  \infty$. Then, any network realizing $Z(s)$ must contain at least five elements, where the number of energy storage elements is at least three.
\end{lemma}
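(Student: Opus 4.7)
The plan is to establish the two lower bounds separately: first that any realization needs at least three energy storage elements via a degree argument, and then that the total count cannot drop below five by eliminating three- and four-element networks using Lemma~\ref{lemma: lossless passive networks}.

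For the energy storage count, I would invoke the standard fact from passive network theory that the impedance of any linear time-invariant passive damper-spring-inerter network admits a state-space realization whose dimension equals the number of independent energy storage elements, so that the McMillan degree of the impedance is bounded above by the total number of springs and inerters in the network. Since $\delta(Z(s)) = 3$ by hypothesis, every realization of $Z(s)$ must contain at least three energy storage elements.

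For the total count, I would argue by exhaustion of the cases of networks with at most four elements. If a realization has at most three elements, then combined with the preceding bound all three must be energy storage elements and no dampers are present, so the network is lossless; every lossless impedance has all of its poles and zeros on $j \mathbb{R} \cup \infty$, contradicting the hypothesis on $Z(s)$. If a realization has exactly four elements, then it must contain three reactive elements and one damper, otherwise either the degree bound fails or the network is again lossless and the preceding contradiction applies. Here the key structural observation is that the smallest non-series-parallel two-terminal network is the Wheatstone bridge, which already uses five elements; hence any four-element realization is series-parallel and therefore admits a top-level decomposition as the series or parallel combination of two non-empty subnetworks $N_1$ and $N_2$. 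The unique damper lies in exactly one of them, say $N_1$, so $N_2$ contains only springs and inerters and is lossless. This exhibits $Z(s)$ as the series or parallel combination of the lossless subnetwork $N_2$ and the general passive subnetwork $N_1$, contradicting Lemma~\ref{lemma: lossless passive networks}.

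The step I expect to be the most delicate is the structural claim that a four-element realization is necessarily series-parallel and that its top-level decomposition cleanly separates the unique damper onto one side regardless of which split ($1{+}3$, $2{+}2$, or $3{+}1$) occurs; the case analysis is short but must be stated carefully so that Lemma~\ref{lemma: lossless passive networks} applies verbatim. The McMillan degree inequality and the Foster-type observation that a lossless impedance has all its singularities on $j \mathbb{R} \cup \infty$ are classical and can be cited, so the bulk of the work is in exposing the correct decomposition of the four-element network.
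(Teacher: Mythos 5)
Your proposal is correct and follows essentially the same route as the paper: the McMillan-degree lower bound on the number of energy storage elements, the observation that non-series-parallel networks require at least five elements, and an appeal to Lemma~\ref{lemma: lossless passive networks} applied to the top-level series/parallel decomposition. The only cosmetic difference is that the paper argues directly that each of the two top-level subnetworks must contain a damper (giving $\geq 2$ dampers plus $\geq 3$ storage elements), whereas you run the contrapositive by eliminating the three- and four-element cases; the content is identical.
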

\begin{proof}
Since it is shown in \cite[pg.~370]{AV73} that the   McMillan degree of a given impedance is equal to the least number of energy storage elements, any network realizing $Z(s)$ must contain at least three energy storage elements. It is clear that any non-series-parallel configuration must contain at least five elements.
For any series-parallel  realization of $Z(s)$, the network  can be decomposed as a parallel or series connection of two series-parallel subnetworks. By Lemma~\ref{lemma: lossless passive networks}, each of these two subnetworks must contain at least one damper, which implies that any series-parallel network realizing $Z(s)$ also contains at least five elements.
\end{proof}


\subsection{Realizations as Five-Element Series-Parallel Networks}

In Lemma~\ref{lemma: number of elements}, it is shown that any five-element series-parallel network realizing a bicubic impedance \eqref{eq: three-degree impedance} without any pole or zero on $j \mathbb{R} \cup  \infty$ contains the least number of elements. This subsection will derive the realization results of $Z(s)$ as five-element series-parallel networks. The following Lemmas~\ref{lemma: third-order configurations}--\ref{lemma: realizability condition of N6a} will be utilized to prove Theorem~\ref{theorem: third-degree five-element series-parallel network}.

\begin{lemma}  \label{lemma: third-order configurations}
Consider a bicubic impedance $Z(s)$ in the form of \eqref{eq: three-degree impedance}, where $a_i, d_j > 0$ for $i, j = 0, 1, 2, 3$, $\delta(Z(s)) = 3$, and there is not any pole or zero on $j \mathbb{R} \cup  \infty$. Then, $Z(s)$ is realizable as a five-element series-parallel network, if and only if $Z(s)$ is realizable as one of the configurations in Figs.~\ref{fig: N1}--\ref{fig: N6}.
\end{lemma}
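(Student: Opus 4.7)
The plan is to prove both directions of the biconditional. Sufficiency is immediate: each configuration in Figs.~\ref{fig: N1}--\ref{fig: N6} is, by construction, a two-terminal series-parallel network with five elements, so any $Z(s)$ realized by one of them is trivially realizable as a five-element series-parallel network.

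For necessity I would argue by a finite enumeration of five-element series-parallel networks, pruned by the already-established topological restrictions and then collapsed under the symmetry group $\{\text{id},\text{GDu},\text{Inv},\text{Dual}\}$ introduced in Section~\ref{sec: notations}. The starting observation is that, by Lemma~\ref{lemma: number of elements}, any such realization has exactly three energy storage elements and two dampers (three energy storage elements are forced by $\delta(Z(s))=3$, and the series-parallel half of the proof of Lemma~\ref{lemma: number of elements} forces at least two dampers). So the task is to enumerate, up to the symmetries, all ways to build a series-parallel network with two labelled terminals from two damper-edges and three storage-edges.

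I would generate the underlying graphs by the standard recursive definition: a series-parallel network on $n$ edges is either a single edge or a series or parallel composition of series-parallel subnetworks whose edge counts sum to $n$, with the top-level operation alternating down the tree. Up to the involution $\text{GDu}$ we may take the top-level operation to be series, giving the compositions of five corresponding to the partitions $(5)$, $(4,1)$, $(3,2)$, $(3,1,1)$, $(2,2,1)$, $(2,1,1,1)$, $(1,1,1,1,1)$, and then expanding each non-trivial part by a parallel composition, and so on. For each resulting unlabelled graph I would distribute the three storage edges and two damper edges in all combinatorially inequivalent ways, then apply three pruning tests: (i) Lemma~\ref{lemma: graph constraint} eliminates any assignment in which the spring edges, or the inerter edges, contain a path or a cut-set between $a$ and $a'$; (ii) Lemma~\ref{lemma: lossless passive networks} eliminates any assignment in which some maximal series or parallel sub-composition consists purely of storage elements, so that inductively every such sub-composition must contain at least one damper; and (iii) configurations that differ only by a relabelling within a parallel or series cluster, or by obvious edge permutations, are identified.

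The surviving configurations are then collected into orbits of the group $\{\text{id},\text{GDu},\text{Inv},\text{Dual}\}$; each orbit of size four is a quartet and each orbit of size two corresponds to a self-dual or self-inverse case, and one checks that these orbits are precisely the six families in Figs.~\ref{fig: N1}--\ref{fig: N6}, accounting for the $22$ configurations. The main obstacle is simply the bookkeeping of this enumeration: making sure that no topologically distinct five-element series-parallel graph is missed, that storage/damper labelings are counted once per orbit, and that the six listed families indeed represent all surviving orbits. The non-trivial verification inside the enumeration is that no extra family escapes the three pruning tests, which requires working through the partition tree systematically; once that is done, Lemma~\ref{lemma: third-order configurations} follows from the realizations exhibited in Figs.~\ref{fig: N1}--\ref{fig: N6}.
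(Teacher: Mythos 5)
Your overall strategy --- exhaustive enumeration of five-element series-parallel graphs with three storage elements and two dampers, pruned by Lemmas~\ref{lemma: graph constraint}--\ref{lemma: number of elements} and collapsed under $\{\mathrm{id},\text{GDu},\text{Inv},\text{Dual}\}$ --- is the same in spirit as the paper's, and your deduction that a five-element series-parallel realization must contain exactly three storage elements and two dampers is correct. However, your pruning test~(ii) is a genuine gap. Lemma~\ref{lemma: lossless passive networks} is a statement about $Z(s)$ itself, i.e.\ about the \emph{top-level} series or parallel decomposition; it does not propagate ``inductively'' to sub-compositions, because an internal sub-impedance is not subject to the hypothesis of having no pole or zero on $j\mathbb{R}\cup\infty$. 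Indeed, the blanket requirement that every maximal series or parallel sub-composition contain a damper is false for the very configurations you are supposed to recover: several of the configurations in Figs.~\ref{fig: N1}--\ref{fig: N6} contain a single spring or inerter standing alone as one branch of an internal series/parallel cluster. The paper instead applies Lemma~\ref{lemma: lossless passive networks} only at the top level (forcing the two top-level parts to contain one damper each), uses an associativity/regrouping argument to show that the part containing a single damper cannot split again in the same direction, and then controls the interior of the remaining $3$- and $4$-element blocks by Lemma~\ref{lemma: graph constraint} alone. If you weaken (ii) so that it does not over-prune, you lose the control it was supposed to provide, and your enumeration below the top level rests only on (i) and (iii).

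The second missing ingredient is the nontrivial one-port network equivalence (Lin's theorem, Fig.~\ref{fig: Network_Equivalence}), which changes element values and is not a mere relabelling within a series or parallel cluster, so your test~(iii) does not capture it. The paper needs it twice: to discard surviving graphs whose realizations collapse to four-element networks (contradicting Lemma~\ref{lemma: number of elements}), and to show that for the quartet generated by $\mathcal{N}_{1a}$ the members $\text{Inv}(\mathcal{N}_{1a})$ and $\text{GDu}(\mathcal{N}_{1a})$ are equivalent to those in Fig.~\ref{fig: N1}, which is why that family contributes two rather than four configurations and why the total is $22$. Your orbit-size accounting (``each orbit of size two corresponds to a self-dual or self-inverse case'') is therefore also not quite right: the Fig.~\ref{fig: N1} orbit has size two because of this equivalence, not because of any symmetry of the labelled graph. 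Without these two repairs your enumeration would either eliminate valid configurations or leave extra ones that you cannot match to Figs.~\ref{fig: N1}--\ref{fig: N6}.
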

\begin{proof}
See Appendix~A for details, where Lemmas~\ref{lemma: graph constraint}--\ref{lemma: number of elements} are utilized in the proof.
\end{proof}

\begin{figure}[thpb]
      \centering
      \subfigure[]{
      \includegraphics[scale=0.9]{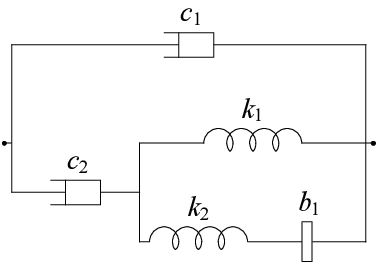}
      \label{fig: N1a}}
      \subfigure[]{
      \includegraphics[scale=0.9]{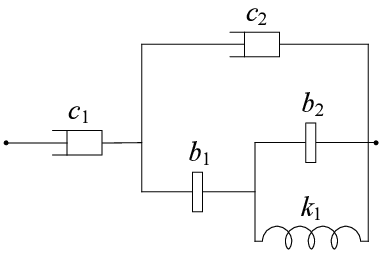}
      \label{fig: N1b}}
      \caption{Five-element series-parallel configurations that can realize the bicubic impedance $Z(s)$ in \eqref{eq: three-degree impedance}, whose \emph{one-terminal-pair labeled graphs} \cite{WC20}, \cite[pg.~14]{SR61}  are (a) $\mathcal{N}_{1a}$ and (b) $\mathcal{N}_{1b}$, respectively, satisfying $\mathcal{N}_{1b} = \text{Dual}(\mathcal{N}_{1a})$. Moreover, the configurations whose one-terminal-pair labeled graphs are $\text{Inv}(\mathcal{N}_{1a})$ and $\text{GDu}(\mathcal{N}_{1a})$ can always be equivalent to the configurations in (b) and (a), respectively.  }
      \label{fig: N1}
\end{figure}

\begin{figure}[thpb]
      \centering
      \subfigure[]{
      \includegraphics[scale=0.9]{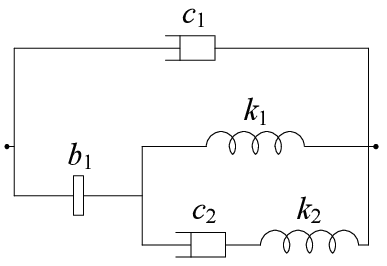}
      \label{fig: N2a}}
      \subfigure[]{
      \includegraphics[scale=0.9]{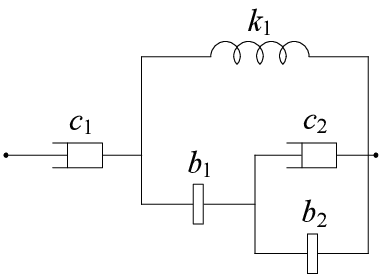}
      \label{fig: N2b}}
      \subfigure[]{
      \includegraphics[scale=0.9]{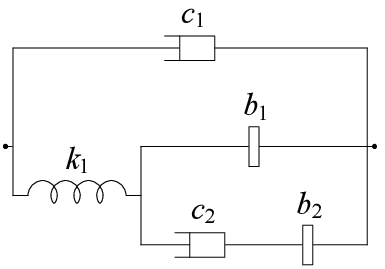}
      \label{fig: N2c}}
      \subfigure[]{
      \includegraphics[scale=0.9]{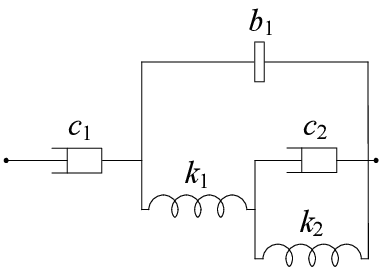}
      \label{fig: N2d}}
      \caption{Five-element series-parallel configurations that can realize the bicubic impedance $Z(s)$ in \eqref{eq: three-degree impedance}, whose one-terminal-pair labeled graphs are (a) $\mathcal{N}_{2a}$, (b) $\mathcal{N}_{2b}$, (c) $\mathcal{N}_{2c}$, and (d) $\mathcal{N}_{2d}$, respectively, satisfying $\mathcal{N}_{2b} = \text{Dual}(\mathcal{N}_{2a})$, $\mathcal{N}_{2c} = \text{Inv}(\mathcal{N}_{2a})$, and $\mathcal{N}_{2d} = \text{GDu}(\mathcal{N}_{2a})$.}
      \label{fig: N2}
\end{figure}

\begin{figure}[thpb]
      \centering
      \subfigure[]{
      \includegraphics[scale=0.9]{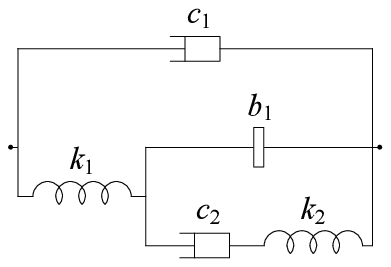}
      \label{fig: N3a}}
      \subfigure[]{
      \includegraphics[scale=0.9]{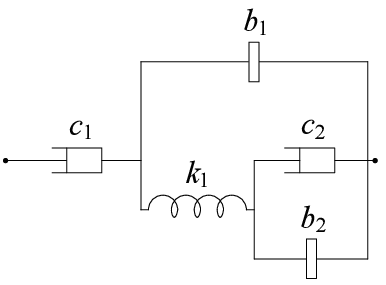}
      \label{fig: N3b}}
      \subfigure[]{
      \includegraphics[scale=0.9]{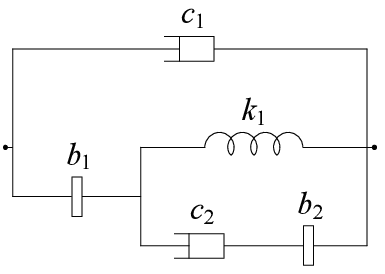}
      \label{fig: N3c}}
      \subfigure[]{
      \includegraphics[scale=0.9]{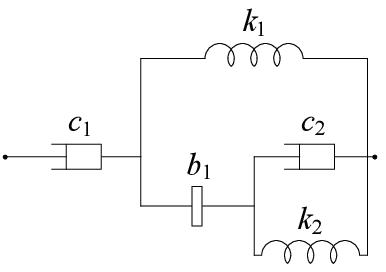}
      \label{fig: N3d}}
      \caption{Five-element series-parallel configurations that can realize the bicubic impedance $Z(s)$ in \eqref{eq: three-degree impedance}, whose one-terminal-pair labeled graphs are (a) $\mathcal{N}_{3a}$, (b) $\mathcal{N}_{3b}$, (c) $\mathcal{N}_{3c}$, and (d) $\mathcal{N}_{3d}$, respectively, satisfying $\mathcal{N}_{3b} = \text{Dual}(\mathcal{N}_{3a})$, $\mathcal{N}_{3c} = \text{Inv}(\mathcal{N}_{3a})$, and $\mathcal{N}_{3d} = \text{GDu}(\mathcal{N}_{3a})$.}
      \label{fig: N3}
\end{figure}

\begin{figure}[thpb]
      \centering
      \subfigure[]{
      \includegraphics[scale=0.9]{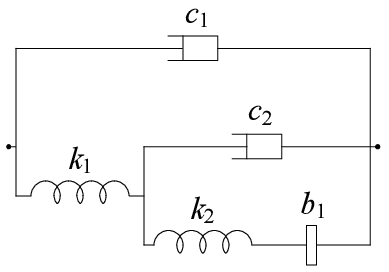}
      \label{fig: N4a}}
      \subfigure[]{
      \includegraphics[scale=0.9]{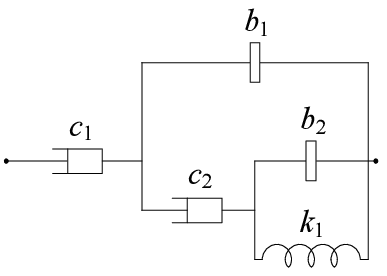}
      \label{fig: N4b}}
      \subfigure[]{
      \includegraphics[scale=0.9]{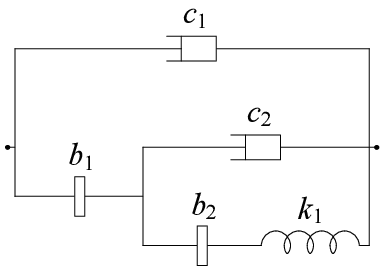}
      \label{fig: N4c}}
      \subfigure[]{
      \includegraphics[scale=0.9]{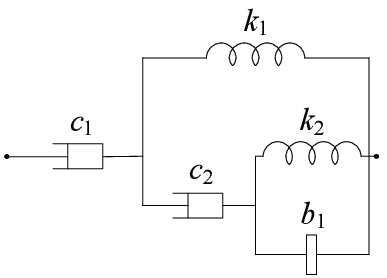}
      \label{fig: N4d}}
      \caption{Five-element series-parallel configurations that can realize the bicubic impedance $Z(s)$ in \eqref{eq: three-degree impedance}, whose one-terminal-pair labeled graphs are (a) $\mathcal{N}_{4a}$, (b) $\mathcal{N}_{4b}$, (c) $\mathcal{N}_{4c}$, and (d) $\mathcal{N}_{4d}$, respectively, satisfying $\mathcal{N}_{4b} = \text{Dual}(\mathcal{N}_{4a})$, $\mathcal{N}_{4c} = \text{Inv}(\mathcal{N}_{4a})$, and $\mathcal{N}_{4d} = \text{GDu}(\mathcal{N}_{4a})$.}
      \label{fig: N4}
\end{figure}

\begin{figure}[thpb]
      \centering
      \subfigure[]{
      \includegraphics[scale=0.9]{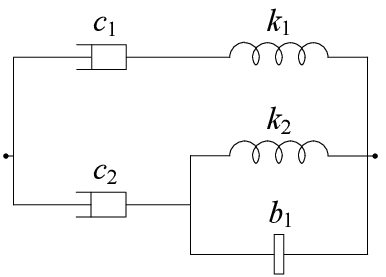}
      \label{fig: N5a}}
      \subfigure[]{
      \includegraphics[scale=0.9]{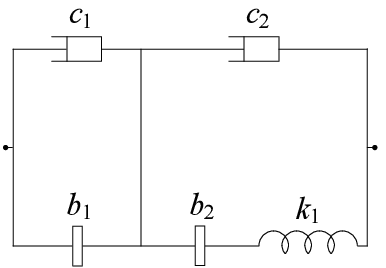}
      \label{fig: N5b}}
      \subfigure[]{
      \includegraphics[scale=0.9]{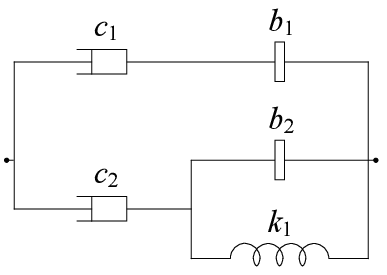}
      \label{fig: N5c}}
      \subfigure[]{
      \includegraphics[scale=0.9]{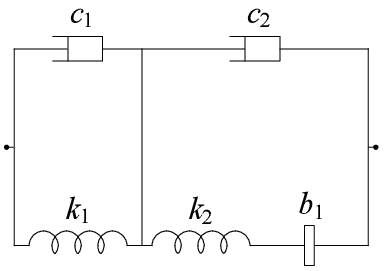}
      \label{fig: N5d}}
      \caption{Five-element series-parallel configurations that can realize the bicubic impedance $Z(s)$ in \eqref{eq: three-degree impedance}, whose one-terminal-pair labeled graphs are (a) $\mathcal{N}_{5a}$, (b) $\mathcal{N}_{5b}$, (c) $\mathcal{N}_{5c}$, and (d) $\mathcal{N}_{5d}$, respectively, satisfying $\mathcal{N}_{5b} = \text{Dual}(\mathcal{N}_{5a})$, $\mathcal{N}_{5c} = \text{Inv}(\mathcal{N}_{5a})$, and $\mathcal{N}_{5d} = \text{GDu}(\mathcal{N}_{5a})$.}
      \label{fig: N5}
\end{figure}

\begin{figure}[thpb]
      \centering
      \subfigure[]{
      \includegraphics[scale=0.9]{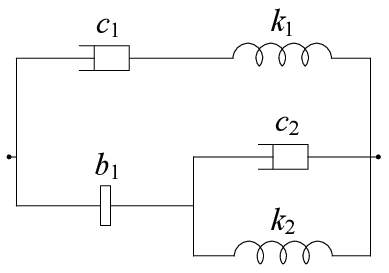}
      \label{fig: N6a}}
      \subfigure[]{
      \includegraphics[scale=0.9]{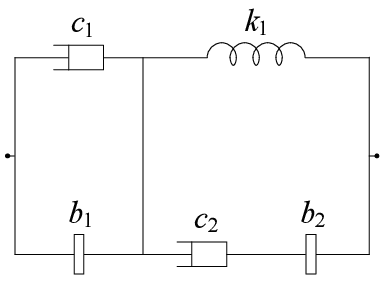}
      \label{fig: N6b}}
      \subfigure[]{
      \includegraphics[scale=0.9]{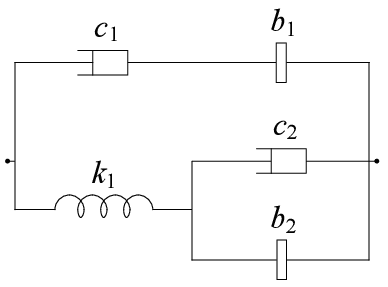}
      \label{fig: N6c}}
      \subfigure[]{
      \includegraphics[scale=0.9]{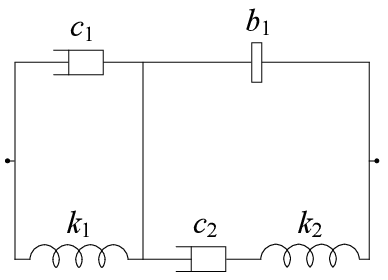}
      \label{fig: N6d}}
      \caption{Five-element series-parallel configurations that can realize the bicubic impedance $Z(s)$ in \eqref{eq: three-degree impedance}, whose one-terminal-pair labeled graphs are (a) $\mathcal{N}_{6a}$, (b) $\mathcal{N}_{6b}$, (c) $\mathcal{N}_{6c}$, and (d) $\mathcal{N}_{6d}$, respectively, satisfying $\mathcal{N}_{6b} = \text{Dual}(\mathcal{N}_{6a})$, $\mathcal{N}_{6c} = \text{Inv}(\mathcal{N}_{6a})$, and $\mathcal{N}_{6d} = \text{GDu}(\mathcal{N}_{6a})$.}
      \label{fig: N6}
\end{figure}

The above lemma (Lemma~\ref{lemma: third-order configurations}) presents a set of realization configurations in Figs.~\ref{fig: N1}--\ref{fig: N6} that can cover all the cases of five-element series-parallel realizations. Then, the realizability conditions of these configurations need to be derived, which will be shown in the following Lemmas~\ref{lemma: realizability condition of N1a}--\ref{lemma: realizability condition of N6a}.

\begin{lemma}   \label{lemma: realizability condition of N1a}
Consider a bicubic impedance $Z(s)$ in the form of \eqref{eq: three-degree impedance}, where $a_i, d_j > 0$ for $i, j = 0, 1, 2, 3$, $\delta(Z(s)) = 3$, and there is not any pole or zero on $j \mathbb{R} \cup  \infty$. Then, $Z(s)$ is realizable as one of the five-element series-parallel configurations in Fig.~\ref{fig: N1} (whose  one-terminal-pair labeled graph is $\mathcal{N}_{1a}$ or $\mathcal{N}_{1b}$), if and only if $\mathcal{B}_{13} \neq 0$,
$\mathcal{B}_{12} = 0$,   $\mathcal{B}_{23} = 0$,  and    $\Delta_1 > 0$.
Moreover, if $\mathcal{B}_{13} > 0$, $\mathcal{B}_{12} = 0$, $\mathcal{B}_{23} = 0$,  and $\Delta_1 > 0$, then
$Z(s)$ is realizable as   in Fig.~\ref{fig: N1a} whose element values can be expressed as
\begin{equation}  \label{eq: N1a element values}
\begin{split}
c_1 = \frac{d_3}{a_3}, ~~ c_2 = \frac{\mathcal{B}_{13}}{a_0 a_3}, ~~
k_1 = \frac{d_3 \mathcal{B}_{13}}{a_3^2 d_1},  ~~
k_2 = \frac{\mathcal{B}_{13} \Delta_1}{a_0a_1a_3^2}, ~~
b_1 = \frac{\mathcal{B}_{13} \Delta_1}{a_0a_1^2a_3}.
\end{split}
\end{equation}
\end{lemma}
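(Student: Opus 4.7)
The plan is to analyze $\mathcal{N}_{1a}$ in detail and then deduce the $\mathcal{N}_{1b}$ case via the principle of duality recalled in Section~\ref{sec: notations}. For $\mathcal{N}_{1a}$, I would first write its driving-point impedance $Z_{1a}(s)$ as a rational function of $s$ with symbolic element values $c_1, c_2, k_1, k_2, b_1$ via successive series/parallel reductions, putting it in the form of a ratio of two cubic polynomials. Equating $Z_{1a}(s) = Z(s)$ and clearing denominators yields seven independent scalar equations in the five element values (after absorbing one overall scale factor). Since the space of bicubic impedances has seven degrees of freedom while the network has only five, exactly two algebraic compatibility relations on the $a_i, d_j$ must emerge; guided by the target formulas \eqref{eq: N1a element values}, I expect them to factor cleanly as $\mathcal{B}_{12} = 0$ and $\mathcal{B}_{23} = 0$, after which the remaining system solves uniquely and reproduces \eqref{eq: N1a element values}.

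For sufficiency under the conditions $\mathcal{B}_{13} > 0$, $\mathcal{B}_{12} = \mathcal{B}_{23} = 0$, $\Delta_1 > 0$, I would substitute \eqref{eq: N1a element values} into the symbolic expression for $Z_{1a}(s)$ and verify $Z_{1a}(s) = Z(s)$ as a polynomial identity, using the two vanishing Bezoutians to absorb the excess terms. Positivity of the element values is then immediate: $c_1 > 0$ follows from $a_3, d_3 > 0$; $c_2$ and $k_1$ from $\mathcal{B}_{13} > 0$; and $k_2$ and $b_1$ from $\mathcal{B}_{13}\Delta_1 > 0$. Necessity for $\mathcal{N}_{1a}$ then follows from the uniqueness of the matching solution combined with these sign requirements.

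For $\mathcal{N}_{1b} = \text{Dual}(\mathcal{N}_{1a})$, the principle of duality gives that $Z(s)$ is realizable by $\mathcal{N}_{1b}$ iff the coefficient swap $a_i \leftrightarrow d_i$ produces a function meeting the $\mathcal{N}_{1a}$ condition. Under this swap each $\mathcal{B}_{ij}$ changes sign and $\Delta_1$ is interchanged with $\Delta_2$, so the $\mathcal{N}_{1b}$ condition reads $\mathcal{B}_{13} < 0$, $\mathcal{B}_{12} = \mathcal{B}_{23} = 0$, $\Delta_2 > 0$. The reconciliation step is the key structural observation: under $\mathcal{B}_{12} = \mathcal{B}_{23} = 0$ one may set $\alpha := a_2/a_0 = d_2/d_0$ and $\beta := a_3/a_1 = d_3/d_1$, so $\Delta_1 = a_0 a_1(\alpha - \beta)$ and $\Delta_2 = d_0 d_1(\alpha - \beta)$ share the same sign, and the $\mathcal{N}_{1a}$ and $\mathcal{N}_{1b}$ conditions fuse into the single statement $\mathcal{B}_{13} \neq 0$, $\mathcal{B}_{12} = \mathcal{B}_{23} = 0$, $\Delta_1 > 0$ claimed in the lemma.

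The main obstacle is the algebraic bookkeeping in the coefficient-matching step: producing $Z_{1a}(s)$ in fully reduced symbolic form and showing that the two compatibility relations arising from the seven-versus-five count collapse \emph{exactly} onto the Bezoutian expressions $\mathcal{B}_{12}$ and $\mathcal{B}_{23}$, rather than onto tangled polynomials in the $a_i, d_j$ that only coincidentally share the same zero set. Once this factorization is in hand, the remainder of the argument is routine substitution, sign analysis, and the duality reduction above.
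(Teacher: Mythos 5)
Your proposal is correct and follows essentially the same route as the paper: coefficient-matching (with an overall positive scale factor) against the impedance of $\mathcal{N}_{1a}$ to extract the element values \eqref{eq: N1a element values} together with the constraints $\mathcal{B}_{12} = \mathcal{B}_{23} = 0$, $\mathcal{B}_{13} > 0$, $\Delta_1 > 0$, then sufficiency by substitution and the case $\mathcal{N}_{1b}$ by duality. Your explicit reconciliation that $\Delta_1$ and $\Delta_2$ share the same sign once $\mathcal{B}_{12} = \mathcal{B}_{23} = 0$ (via $\Delta_1 = a_0a_1(\alpha-\beta)$, $\Delta_2 = d_0d_1(\alpha-\beta)$) is a small but genuine detail that the paper leaves implicit when it fuses the condition for $\mathcal{N}_{1a}$ and its dual into the single statement with $\mathcal{B}_{13}\neq 0$ and $\Delta_1 > 0$.
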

\begin{proof}
See Appendix~\ref{appendix: N1a} for details.
\end{proof}

\begin{lemma}   \label{lemma: realizability condition of N2a}
Consider a bicubic impedance $Z(s)$ in the form of \eqref{eq: three-degree impedance}, where $a_i, d_j > 0$ for $i, j = 0, 1, 2, 3$, $\delta(Z(s)) = 3$, and there is not any pole or zero on $j \mathbb{R} \cup  \infty$. Then, $Z(s)$ is realizable as one of the five-element series-parallel configurations in Fig.~\ref{fig: N2} (whose  one-terminal-pair labeled graph is $\mathcal{N}_{2a}$, $\mathcal{N}_{2b}$, $\mathcal{N}_{2c}$, or $\mathcal{N}_{2d}$), if and only if one of the two conditions holds:
\begin{enumerate}
  \item[1.] $\mathcal{B}_{13} = 0$, $\Delta_1 > 0$, and either $a_0 \mathcal{B}_{33} = a_1 \mathcal{B}_{23} > 0$ or
$a_3 \mathcal{B}_{11} = a_2 \mathcal{B}_{12} < 0$ holds;
  \item[2.] $\mathcal{B}_{13} = 0$, $\Delta_2 > 0$, and either $d_0 \mathcal{B}_{33} = d_1 \mathcal{B}_{23} < 0$ or $d_3 \mathcal{B}_{11} = d_2 \mathcal{B}_{12} > 0$ holds.
\end{enumerate}
Moreover, if $\mathcal{B}_{13} = 0$, $\Delta_1 > 0$, and   $a_0 \mathcal{B}_{33} = a_1 \mathcal{B}_{23} > 0$,  then
$Z(s)$ is realizable as  in Fig.~\ref{fig: N2a} whose element values can be expressed as
\begin{equation}  \label{eq: N2a element values}
\begin{split}
c_1 = \frac{d_3}{a_3}, ~~
c_2 = \frac{\mathcal{B}_{33} \Delta_1}{a_1a_2^2a_3}, ~~
k_1 = \frac{d_0 \mathcal{B}_{33}}{a_1a_2d_3}, ~~
k_2 = \frac{\mathcal{B}_{33} \Delta_1}{a_1a_2a_3^2}, ~~
b_1 = \frac{\mathcal{B}_{33}}{a_1a_3}.
\end{split}
\end{equation}
\end{lemma}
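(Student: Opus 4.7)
The plan is to prove sufficiency by direct computation for Fig.~\ref{fig: N2a} and then extend to the other three configurations using the three symmetry principles introduced in Section~\ref{sec: notations}; the necessity direction will follow from coefficient matching after parameterizing the symbolic impedance of each configuration.

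First I would verify sufficiency for $\mathcal{N}_{2a}$ under the hypotheses $\mathcal{B}_{13} = 0$, $\Delta_1 > 0$, and $a_0 \mathcal{B}_{33} = a_1 \mathcal{B}_{23} > 0$. Each element value in \eqref{eq: N2a element values} should be checked strictly positive: this follows from $a_i, d_j > 0$ together with $\mathcal{B}_{33} > 0$ (since $a_0 > 0$ and $a_0 \mathcal{B}_{33} > 0$) and $\Delta_1 > 0$. I would then compute the impedance of the series-parallel network in Fig.~\ref{fig: N2a} as a rational function in the symbolic element values $c_1, c_2, k_1, k_2, b_1$, substitute \eqref{eq: N2a element values}, and simplify, using the two hypothesized equalities $\mathcal{B}_{13} = 0$ and $a_0 \mathcal{B}_{33} = a_1 \mathcal{B}_{23}$ to cancel spurious terms, so that the resulting expression matches \eqref{eq: three-degree impedance} identically.

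For the other three configurations I would invoke the principles of duality, frequency inversion, and frequency-inverse duality. Under the substitution $a_k \leftrightarrow d_k$ induced by $\text{Dual}$, one checks that $\mathcal{B}_{13} \mapsto -\mathcal{B}_{13}$, $\Delta_1 \mapsto \Delta_2$, $\mathcal{B}_{33} \mapsto -\mathcal{B}_{33}$, and $\mathcal{B}_{23} \mapsto -\mathcal{B}_{23}$, so the condition for $\mathcal{N}_{2a}$ transforms into $\mathcal{B}_{13} = 0$, $\Delta_2 > 0$, $d_0 \mathcal{B}_{33} = d_1 \mathcal{B}_{23} < 0$. Under the substitution $a_k \leftrightarrow a_{3-k}$, $d_k \leftrightarrow d_{3-k}$ induced by $\text{Inv}$, one has $\mathcal{B}_{33} \mapsto -\mathcal{B}_{11}$ and $\mathcal{B}_{23} \mapsto -\mathcal{B}_{12}$, yielding $\mathcal{B}_{13} = 0$, $\Delta_1 > 0$, $a_3 \mathcal{B}_{11} = a_2 \mathcal{B}_{12} < 0$. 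Composing the two substitutions gives the condition for $\mathcal{N}_{2d}$, and the union of the four conditions collapses exactly to the two disjuncts stated in the lemma.

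For necessity, by the same symmetries it suffices to treat $\mathcal{N}_{2a}$. I would compute its symbolic impedance, a bicubic rational function in the five positive parameters $c_1, c_2, k_1, k_2, b_1$, and match coefficients with \eqref{eq: three-degree impedance}. Since six coefficient ratios are being matched against five free positive parameters, one polynomial identity among the $a_i, d_j$ must hold; a short elimination should force $\mathcal{B}_{13} = 0$. The remaining matching equations together with the positivity of each element would then pin down $\Delta_1 > 0$ and the sign-equality $a_0 \mathcal{B}_{33} = a_1 \mathcal{B}_{23} > 0$. The main obstacle is organizing the necessity elimination cleanly so that exactly one Bezoutian-type quantity vanishes and the positive-sign condition falls out without spurious extra constraints; in addition, the duality transport requires careful bookkeeping of sign flips under each symmetry, since different Bezoutian entries behave differently under $a_k \leftrightarrow d_k$ and under $a_k \leftrightarrow a_{3-k}$.
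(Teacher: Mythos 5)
Your proposal follows essentially the same route as the paper's proof: symbolic coefficient matching for the configuration of Fig.~2(a) to derive the element values in \eqref{eq: N2a element values} together with the identities $\mathcal{B}_{13}=0$ and $a_0\mathcal{B}_{33}=a_1\mathcal{B}_{23}$ and the sign conditions, followed by transport to the other three configurations via duality, frequency inversion, and frequency-inverse duality, and your sign bookkeeping for how $\mathcal{B}_{11}$, $\mathcal{B}_{23}$, $\mathcal{B}_{33}$, $\Delta_1$, $\Delta_2$ transform under these substitutions is correct. The only slip is the dimension count in the necessity step: eight coefficients modulo a common scale give seven ratios against five parameters, so two polynomial identities (not one) must emerge from the elimination --- which is exactly what your own statement of the outcome already reflects.
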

\begin{proof}
The method is similar to that of Lemma~\ref{lemma: realizability condition of N1a}, which can be referred to \cite[Section~2]{WC_sup} for details.
\end{proof}

\begin{lemma}   \label{lemma: realizability condition of N3a}
Consider a bicubic impedance $Z(s)$ in the form of \eqref{eq: three-degree impedance}, where $a_i, d_j > 0$ for $i, j = 0, 1, 2, 3$, $\delta(Z(s)) = 3$, and there is not any pole or zero on $j \mathbb{R} \cup  \infty$.  Then, $Z(s)$ is realizable as one of the five-element series-parallel configurations in Fig.~\ref{fig: N3} (whose  one-terminal-pair labeled graph is $\mathcal{N}_{3a}$, $\mathcal{N}_{3b}$, $\mathcal{N}_{3c}$, or $\mathcal{N}_{3d}$), if and only if one of the following four conditions holds:
\begin{enumerate}
  \item[1.] $\mathcal{B}_{33} \Delta_1 = a_2a_3 \mathcal{B}_{13} > 0$ and $a_2 \mathcal{B}_{33} = a_3 \mathcal{B}_{23} > 0$;
  \item[2.] $\mathcal{B}_{33} \Delta_2 = d_2d_3 \mathcal{B}_{13} < 0$ and $d_2 \mathcal{B}_{33} = d_3 \mathcal{B}_{23} < 0$;
  \item[3.] $\mathcal{B}_{11} \Delta_1 = a_0a_1 \mathcal{B}_{13} < 0$ and $a_1 \mathcal{B}_{11} = a_0 \mathcal{B}_{12} < 0$;
  \item[4.] $\mathcal{B}_{11} \Delta_2 = d_0d_1 \mathcal{B}_{13} > 0$ and $d_1 \mathcal{B}_{11} = d_0 \mathcal{B}_{12} > 0$.
\end{enumerate}
Moreover, if Condition~1 holds, then $Z(s)$ is realizable as the configuration in  Fig.~\ref{fig: N3a} whose element values can be expressed as
\begin{equation} \label{eq: N3a element values}
\begin{split}
c_1  = \frac{d_3}{a_3}, ~~
c_2  = \frac{\mathcal{B}_{13}}{a_0 a_3},  ~~
k_1  = \frac{\mathcal{B}_{23}}{a_2 a_3},  ~~
k_2  = \frac{a_2 \mathcal{B}_{13}}{a_0 a_3^2},  ~~
b_1  = \frac{\mathcal{B}_{23}}{a_0 a_3}.
\end{split}
\end{equation}
\end{lemma}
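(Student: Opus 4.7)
The plan is to mimic the approach of Lemma~\ref{lemma: realizability condition of N1a}: first compute the driving-point impedance of $\mathcal{N}_{3a}$ in Fig.~\ref{fig: N3a} in closed form, then match coefficients with \eqref{eq: three-degree impedance}, and finally derive positivity conditions on the element values. Call $Z_{3a}(s)$ the impedance of Fig.~\ref{fig: N3a}. Since the network has five elements, $Z_{3a}(s)$ is a rational function in $s$ whose numerator and denominator are polynomials in $(c_1,c_2,k_1,k_2,b_1)$ of total degree at most~$3$ in~$s$. Matching this with $a(s)/d(s)$ gives seven equations (after normalizing one pair of coefficients, e.g.\ setting the leading denominator coefficients equal) in five element-value unknowns; the system is therefore overdetermined with exactly two independent polynomial constraints on $(a_i,d_j)$.

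My first goal is to show that these two constraints can be reorganised as $\mathcal{B}_{33}\Delta_1 = a_2a_3\mathcal{B}_{13}$ and $a_2\mathcal{B}_{33}= a_3\mathcal{B}_{23}$, and that solving the remaining (now square) subsystem yields precisely the closed-form expressions in \eqref{eq: N3a element values}. Plugging \eqref{eq: N3a element values} back into $Z_{3a}(s)$ then verifies that the realization is correct, so sufficiency reduces to demanding that all five element values in \eqref{eq: N3a element values} be strictly positive; given $a_i,d_j>0$, this is equivalent to $\mathcal{B}_{13}>0$ and $\mathcal{B}_{23}>0$, which in combination with the two equality identities is exactly Condition~1. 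Necessity follows because Lemma~\ref{lemma: third-order configurations} restricts the topology and the impedance function of $\mathcal{N}_{3a}$ determines the element values uniquely up to the relabelling of series/parallel branches, so any realization must produce positive values of \eqref{eq: N3a element values}, forcing Condition~1.

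For the other three companion configurations, rather than repeating the computation, I will invoke the principles of duality, inversion, and frequency-inverse duality from Section~\ref{sec: notations}. Under inversion ($a_k\leftrightarrow a_{3-k}$, $d_k\leftrightarrow d_{3-k}$) the Bezoutian entries transform as $\mathcal{B}_{33}\mapsto -\mathcal{B}_{11}$, $\mathcal{B}_{23}\mapsto -\mathcal{B}_{12}$, $\mathcal{B}_{13}\mapsto -\mathcal{B}_{13}$, while $\Delta_1$ is invariant; Condition~1 is therefore carried to Condition~3, and realizability on $\mathcal{N}_{3c}=\text{Inv}(\mathcal{N}_{3a})$ follows from realizability on $\mathcal{N}_{3a}$ for $Z(s^{-1})$. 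Under duality ($a_k\leftrightarrow d_k$) the entries $\mathcal{B}_{ij}$ flip sign and $\Delta_1\leftrightarrow\Delta_2$, mapping Condition~1 to Condition~2 and $\mathcal{N}_{3a}$ to $\mathcal{N}_{3b}$; the composition (frequency-inverse duality) gives Condition~4 and $\mathcal{N}_{3d}$. The expected main obstacle is the algebraic step identifying the two coefficient-matching constraints as the stated Bezoutian identities: this requires a careful elimination among the seven coefficient equations, since the Bezoutian expressions are not the most obvious combinations, and one must check that these two identities, plus positivity of \eqref{eq: N3a element values}, genuinely imply all seven matching equations rather than just five of them.
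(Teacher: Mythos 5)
Your proposal is correct and follows essentially the same route as the paper: coefficient-matching on the impedance of $\mathcal{N}_{3a}$ yields the element values \eqref{eq: N3a element values} together with two residual equality constraints, positivity of the elements is equivalent to $\mathcal{B}_{13}>0$ and $\mathcal{B}_{23}>0$ (hence Condition~1), and the remaining three configurations are dispatched by duality, inversion, and frequency-inverse duality with exactly the sign/index transformations of $\mathcal{B}_{ij}$, $\Delta_1$, $\Delta_2$ that you state.
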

\begin{proof}
The method is similar to that of Lemma~\ref{lemma: realizability condition of N1a}, which can be referred to \cite[Section~3]{WC_sup} for details.
\end{proof}

\begin{lemma}   \label{lemma: realizability condition of N4a}
Consider a bicubic impedance $Z(s)$ in the form of \eqref{eq: three-degree impedance}, where $a_i, d_j > 0$ for $i, j = 0, 1, 2, 3$, $\delta(Z(s)) = 3$, and there is not any pole or zero on $j \mathbb{R} \cup  \infty$. Then, $Z(s)$ is realizable as one of the five-element series-parallel configurations in Fig.~\ref{fig: N4} (whose  one-terminal-pair labeled graph is $\mathcal{N}_{4a}$, $\mathcal{N}_{4b}$, $\mathcal{N}_{4c}$, or $\mathcal{N}_{4d}$), if and only if
one of the following four conditions holds:
\begin{enumerate}
  \item[1.] $\mathcal{B}_{13} \Delta_1  = a_1^2 \mathcal{B}_{23} > 0$ and $a_1 \mathcal{B}_{33} = a_3 \mathcal{B}_{13} > 0$;
  \item[2.] $\mathcal{B}_{13} \Delta_2  = d_1^2 \mathcal{B}_{23} < 0$ and $d_1 \mathcal{B}_{33} = d_3 \mathcal{B}_{13} < 0$;
  \item[3.] $\mathcal{B}_{13} \Delta_1  = a_2^2 \mathcal{B}_{12} < 0$ and $a_2 \mathcal{B}_{11} = a_0 \mathcal{B}_{13} < 0$;
  \item[4.] $\mathcal{B}_{13} \Delta_2  = d_2^2 \mathcal{B}_{12} > 0$ and $d_2 \mathcal{B}_{11} = d_0 \mathcal{B}_{13} > 0$.
\end{enumerate}
Moreover, if Condition~1 holds, then $Z(s)$ is realizable as  in Fig.~\ref{fig: N4a} whose element values can be expressed as
\begin{equation}  \label{eq: N4a element values}
\begin{split}
c_1  = \frac{d_3}{a_3}, ~~
c_2  = \frac{\mathcal{B}_{13}}{a_0 a_3}, ~~
k_1  = \frac{\mathcal{B}_{13}}{a_1 a_3},  ~~
k_2  = \frac{a_1 \mathcal{B}_{23}}{a_0 a_3^2},  ~~
b_1  = \frac{\mathcal{B}_{23}}{a_0 a_3}.
\end{split}
\end{equation}
\end{lemma}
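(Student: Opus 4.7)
The plan is to follow the same template as Lemma~\ref{lemma: realizability condition of N1a}, exploiting the fact that the four configurations in Fig.~\ref{fig: N4} are related by the three involutions $\text{Dual}$, $\text{Inv}$, and $\text{GDu}$. Consequently, it suffices to handle the configuration $\mathcal{N}_{4a}$ corresponding to Condition~1, and then obtain Conditions~2--4 automatically from Condition~1 via the substitutions $a_k \leftrightarrow d_k$ (duality, giving $\mathcal{N}_{4b}$), $a_k \leftrightarrow a_{3-k}$ together with $d_k \leftrightarrow d_{3-k}$ (frequency inversion, giving $\mathcal{N}_{4c}$), and $a_k \leftrightarrow d_{3-k}$ (frequency-inverse duality, giving $\mathcal{N}_{4d}$), as set up in Section~\ref{sec: notations}. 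These substitutions send $\mathcal{B}_{13}$, $\mathcal{B}_{23}$, $\mathcal{B}_{33}$, $\Delta_1$ into the expressions appearing in Conditions~2--4 (with appropriate sign changes), so the four cases of the lemma statement collapse into one.

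For $\mathcal{N}_{4a}$, the first step is to write the impedance $\widehat{Z}(s)$ of the configuration in Fig.~\ref{fig: N4a} as a rational function of the five element values $c_1, c_2, k_1, k_2, b_1$. Since the network is series-parallel, this is a direct series/parallel composition yielding a bicubic (third-degree) numerator and denominator. Next, I would equate $\widehat{Z}(s) = Z(s) = a(s)/d(s)$ coefficient-wise; because both sides are ratios of cubics, this gives seven coefficient equations determined up to one common scalar, i.e.\ six independent constraints on five unknowns. Eliminating the five element values yields exactly two algebraic compatibility conditions between $a_0, \ldots, a_3, d_0, \ldots, d_3$, which I expect to simplify (using $a_i, d_j > 0$) to $\mathcal{B}_{13} \Delta_1 = a_1^2 \mathcal{B}_{23}$ and $a_1 \mathcal{B}_{33} = a_3 \mathcal{B}_{13}$, matching Condition~1. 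Positivity of each element value then forces $\mathcal{B}_{13} > 0$ and $\mathcal{B}_{23} > 0$, and hence also $\Delta_1 > 0$ and $\mathcal{B}_{33} > 0$.

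For sufficiency, I would take Condition~1 as a hypothesis, define $c_1, c_2, k_1, k_2, b_1$ by the explicit formulas in \eqref{eq: N4a element values}, verify that each is positive under the hypothesis, and substitute back into the expression for $\widehat{Z}(s)$ obtained in the first step. Using the two relations in Condition~1 together with the definitions of $\mathcal{B}_{13}, \mathcal{B}_{23}, \mathcal{B}_{33}, \Delta_1$, a direct (if tedious) polynomial reduction should collapse $\widehat{Z}(s)$ to $a(s)/d(s)$.

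The main obstacle will be the elimination step: reducing the six-equation, five-unknown coefficient-matching system to a clean pair of polynomial identities in the form stated in Condition~1. In particular, the system is overdetermined, and one has to choose a good sequence of eliminations so that the Bezoutian entries $\mathcal{B}_{13}, \mathcal{B}_{23}, \mathcal{B}_{33}$ and the discriminant $\Delta_1$ emerge naturally rather than being hidden inside larger polynomials. Following the strategy used for Lemma~\ref{lemma: realizability condition of N1a}, I would solve first for $c_1$ from the leading-coefficient ratio ($c_1 = d_3/a_3$ is forced), then use one constant-term ratio to get $c_2$, next use the $s$- and $s^2$-coefficients to get $k_1$, $k_2$, $b_1$, and finally interpret the two remaining equations as the compatibility conditions; the detailed computation can be relegated to the supplementary material in the same manner as Lemmas~\ref{lemma: realizability condition of N2a} and \ref{lemma: realizability condition of N3a}.
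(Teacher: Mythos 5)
Your proposal is correct and follows essentially the same route as the paper's proof: reduce to the single configuration $\mathcal{N}_{4a}$ via the duality/inversion/frequency-inverse-duality principles, match the eight impedance coefficients up to a positive scalar $x$, solve for $c_1, c_2, k_1, k_2, b_1$ in exactly the order you describe, and read off the two remaining equations as the compatibility conditions of Condition~1, with positivity of the element values giving $\mathcal{B}_{13} > 0$ and $\mathcal{B}_{23} > 0$. The only blemish is a counting slip (``six independent constraints'' should be seven, i.e.\ eight coefficients modulo one scale), but you correctly conclude that two compatibility conditions remain, so this does not affect the argument.
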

\begin{proof}
The method is similar to that of Lemma~\ref{lemma: realizability condition of N1a}, which can be referred to \cite[Section~4]{WC_sup} for details.
\end{proof}

\begin{lemma}   \label{lemma: realizability condition of N5a}
Consider a bicubic impedance $Z(s)$ in the form of \eqref{eq: three-degree impedance}, where $a_i, d_j > 0$ for $i, j = 0, 1, 2, 3$, $\delta(Z(s)) = 3$, and there is not any pole or zero on $j \mathbb{R} \cup  \infty$. Then, $Z(s)$ is realizable as one of the five-element series-parallel configurations in Fig.~\ref{fig: N5} (whose  one-terminal-pair labeled graph is $\mathcal{N}_{5a}$, $\mathcal{N}_{5b}$, $\mathcal{N}_{5c}$, or $\mathcal{N}_{5d}$),
if and only if one of the following four conditions holds:
\begin{enumerate}
  \item[1.] $a_3^2 d_0^2 \Delta_2 = d_2^2 \mathcal{B}_{12} \mathcal{B}_{13} > 0$ and
$a_0 d_2^2 \mathcal{B}_{12} = a_3 d_0^2 (a_1d_2 - a_3d_0) > 0$;
  \item[2.] $a_0^2 d_3^2 \Delta_1 = a_2^2 \mathcal{B}_{12} \mathcal{B}_{13} > 0$ and
$a_2^2 d_0 \mathcal{B}_{12} = a_0^2 d_3 (a_0d_3 - a_2d_1) < 0$;
  \item[3.] $a_0^2 d_3^2 \Delta_2 = d_1^2 \mathcal{B}_{23} \mathcal{B}_{13} > 0$ and
$a_3 d_1^2 \mathcal{B}_{23} = a_0 d_3^2 (a_0d_3 - a_2d_1) < 0$;
  \item[4.] $a_3^2 d_0^2 \Delta_1 = a_1^2 \mathcal{B}_{23} \mathcal{B}_{13} > 0$ and
$a_1^2 d_3 \mathcal{B}_{23} = a_3^2 d_0 (a_1d_2 - a_3d_0) > 0$.
\end{enumerate}
Moreover, if Condition~1 holds, then $Z(s)$ is realizable as the configuration in Fig.~\ref{fig: N5a} whose element values can be expressed as
\begin{equation}  \label{eq: N5a element values}
\begin{split}
c_1 = \frac{\mathcal{B}_{13}}{a_0 a_3}, ~~
c_2 = \frac{d_3}{a_3}, ~~
k_1 = \frac{d_2 \mathcal{B}_{13}}{a_3^2d_0},  ~~
k_2 = \frac{d_2d_3 \mathcal{B}_{13}}{a_3^2 \Delta_2}, ~~
b_1 = \frac{d_2^2d_3 \mathcal{B}_{13}}{a_3^2d_0 \Delta_2}.
\end{split}
\end{equation}
\end{lemma}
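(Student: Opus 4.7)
The plan is to analyze configuration $\mathcal{N}_{5a}$ (Fig.~\ref{fig: N5a}) directly and then recover the remaining three cases from the principles of duality, frequency inversion, and frequency-inverse duality recorded in Section~\ref{sec: notations}. Under $\text{Dual}$, $\text{Inv}$, and $\text{GDu}$, the substitutions $a_k \leftrightarrow d_k$, the pair $a_k \leftrightarrow a_{3-k}$ with $d_k \leftrightarrow d_{3-k}$, and $a_k \leftrightarrow d_{3-k}$ send Condition~1 to Conditions~2, 3, and 4 respectively; one checks that under these swaps $\mathcal{B}_{12}$, $\mathcal{B}_{13}$, $\mathcal{B}_{23}$, together with the pair $\Delta_1, \Delta_2$, are permuted into one another up to sign, so it suffices to establish the equivalence between realizability as $\mathcal{N}_{5a}$ and Condition~1.

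For the sufficiency direction, first I would write down the impedance $Z_{5a}(s)$ of the network in Fig.~\ref{fig: N5a} as a bicubic rational function of $s$ by iterated series/parallel combinations of the five elements $c_1, c_2, k_1, k_2, b_1$. Then I would substitute the closed-form expressions in \eqref{eq: N5a element values} into $Z_{5a}(s)$ and verify by direct algebra that $Z_{5a}(s) = Z(s)$. Under Condition~1, the identity $a_3^2 d_0^2 \Delta_2 = d_2^2 \mathcal{B}_{12} \mathcal{B}_{13} > 0$ forces $\Delta_2 > 0$ and $\mathcal{B}_{12}\mathcal{B}_{13}>0$, while $a_0 d_2^2 \mathcal{B}_{12} = a_3 d_0^2 (a_1 d_2 - a_3 d_0) > 0$ forces $\mathcal{B}_{12}>0$ and hence $\mathcal{B}_{13}>0$; the positivity of all five values in \eqref{eq: N5a element values} then follows directly from $a_i, d_j>0$.

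For the necessity direction, I would assume $Z(s) = Z_{5a}(s)$ for some positive element values and match coefficients of $s^k$ in the numerator and denominator modulo a common scaling constant. This yields an overdetermined system on the five unknowns that can be solved sequentially: $c_2$ is read off from the leading ratio $d_3/a_3$, after which $c_1, k_1, k_2, b_1$ are determined uniquely and take the form of \eqref{eq: N5a element values}, while the two remaining compatibility relations between the coefficients rearrange into the two identities of Condition~1. The strict positivity of each expression in \eqref{eq: N5a element values} then delivers the required sign constraints, and the mapping back through $\text{Dual}$, $\text{Inv}$, $\text{GDu}$ recovers Conditions~2--4 for the other three configurations.

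I expect the main obstacle to be extracting the compatibility identities in the compact Bezoutian and $\Delta$ form of Condition~1 rather than in a sprawling raw-monomial form; the natural by-hand output of coefficient matching will be large polynomials in $a_i, d_j$, and the work lies in collapsing them onto the quantities defined in Section~\ref{sec: notations}. Careful bookkeeping, together with Lemma~\ref{lemma: graph constraint} to exclude the degenerate subcases in which springs (or inerters) would form a path or cut-set between the terminals, should reduce the raw relations to precisely the two equalities asserted in Condition~1 and rule out any spurious branches of the solution set.
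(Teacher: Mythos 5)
Your proposal is correct and follows essentially the same route as the paper's proof in the supplementary material: reduce to the single configuration $\mathcal{N}_{5a}$ and Condition~1 via the principles of duality, frequency inversion, and frequency-inverse duality; then match numerator and denominator coefficients up to a positive scale factor $x$, solve sequentially for $c_2$, $x$, $c_1$, $k_1$, $k_2$, $b_1$ to obtain \eqref{eq: N5a element values} with the sign constraints $\mathcal{B}_{13}>0$, $\Delta_2>0$, $\mathcal{B}_{12}>0$, and identify the two leftover compatibility equations with the identities of Condition~1 (the converse being a direct substitution). The only cosmetic difference is your invocation of Lemma~\ref{lemma: graph constraint}, which is not actually needed once the configuration is fixed.
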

\begin{proof}
The method is similar to that of Lemma~\ref{lemma: realizability condition of N1a}, which can be referred to \cite[Section~5]{WC_sup} for details.
\end{proof}

Define the notation $\zeta_1$ as
\begin{equation} \label{eq: zeta}
\zeta_1 := \frac{a_0 (\mathcal{B}_{23} \pm \sqrt{\mathcal{M}_{23}^2 - 4a_0a_3d_2d_3})}{2 a_3}.
\end{equation}
Then, the notations $\zeta_2$, $\zeta_3$,  and $\zeta_4$ can be obtained from $\zeta_1$ according to the conversion $a_k \leftrightarrow d_k$ for $k = 0, 1, 2, 3$, the conversion $a_k \leftrightarrow a_{3-k}$  and $d_k \leftrightarrow d_{3-k}$ for $k = 0, 1$,  and the conversion $a_k \leftrightarrow d_{3-k}$ for $k = 0, 1, 2, 3$, respectively. Then, the following lemma can be formulated.

\begin{lemma}  \label{lemma: realizability condition of N6a}
Consider a bicubic impedance $Z(s)$ in the form of \eqref{eq: three-degree impedance}, where $a_i, d_j > 0$ for $i, j = 0, 1, 2, 3$, $\delta(Z(s)) = 3$, and there is not any pole or zero on $j \mathbb{R} \cup  \infty$. Then, $Z(s)$ is realizable as one of the five-element series-parallel configurations in Fig.~\ref{fig: N6} (whose  one-terminal-pair labeled graph is $\mathcal{N}_{6a}$, $\mathcal{N}_{6b}$, $\mathcal{N}_{6c}$, or $\mathcal{N}_{6d}$),
if and only if one of the following four conditions holds:
\begin{enumerate}
  \item[1.] $0 < \zeta_1 < \min\{ a_1d_0, a_0d_1 \}$, $\mathcal{M}_{23}^2 - 4a_0a_3d_2d_3 \geq 0$,
$a_3 \zeta_1^2 - a_0 \mathcal{B}_{23} \zeta_1 - a_0^2d_3(a_1d_1-a_0d_2) = 0$, and
$\zeta_1^3 - \mathcal{M}_{11} \zeta_1^2 + a_0a_1d_0d_1 \zeta_1 - a_0^3 d_0^2 d_3 = 0$;
  \item[2.] $0 < \zeta_2 < \min\{ a_1d_0, a_0d_1 \}$, $\mathcal{M}_{23}^2 - 4a_2a_3d_0d_3 \geq 0$,
$d_3 \zeta_2^2 + d_0 \mathcal{B}_{23} \zeta_2 - a_3 d_0^2 (a_1d_1-a_2d_0) = 0$, and
$\zeta_2^3 - \mathcal{M}_{11} \zeta_2^2 + a_0a_1d_0d_1 \zeta_2 - a_0^2a_3 d_0^3 = 0$;
  \item[3.] $0 < \zeta_3 < \min\{ a_3d_2 , a_2d_3 \}$, $\mathcal{M}_{12}^2 - 4a_0a_3d_0d_1 \geq 0$,
$a_0 \zeta_3^2 + a_3 \mathcal{B}_{12} \zeta_3 - a_3^2d_0(a_2d_2-a_3d_1) = 0$, and
$\zeta_3^3 - \mathcal{M}_{33} \zeta_3^2 + a_2a_3d_2d_3 \zeta_3 -
a_3^3 d_0 d_3^2 = 0$;
  \item[4.] $0 < \zeta_4 < \min\{ a_3d_2 , a_2d_3 \}$, $\mathcal{M}_{12}^2 - 4a_0a_1d_0d_3 \geq 0$,
$d_0 \zeta_4^2 - d_3 \mathcal{B}_{12} \zeta_4 - a_0 d_3^2 (a_2d_2-a_1d_3) = 0$, and
$\zeta_4^3 - \mathcal{M}_{33} \zeta_4^2 +a_2 a_3 d_2 d_3 \zeta_4 - a_0 a_3^2 d_3^3 = 0$.
\end{enumerate}
Moreover, if Condition~1 holds, then $Z(s)$ is realizable as the configuration in Fig.~\ref{fig: N6a} whose element values can be expressed as
\begin{equation}  \label{eq: N6a element values}
\begin{split}
c_1 = \frac{d_0}{a_0}, ~~
c_2 = \frac{d_3}{a_3}, ~~
k_1 = \frac{d_0^2}{a_1d_0 - \zeta_1}, ~~
k_2 = \frac{a_0d_0d_3}{a_3 \zeta_1}, ~~
b_1 = \frac{a_0d_1 - \zeta_1}{a_0^2}.
\end{split}
\end{equation}
\end{lemma}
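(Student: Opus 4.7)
The plan is to follow the same pattern used in Lemmas~\ref{lemma: realizability condition of N1a}--\ref{lemma: realizability condition of N5a}: prove the statement for configuration $\mathcal{N}_{6a}$ (Condition~1), then obtain the conditions for $\mathcal{N}_{6b}$, $\mathcal{N}_{6c}$, and $\mathcal{N}_{6d}$ by invoking the principle of duality ($a_k \leftrightarrow d_k$), frequency inversion ($a_k \leftrightarrow a_{3-k}$ together with $d_k \leftrightarrow d_{3-k}$), and frequency-inverse duality ($a_k \leftrightarrow d_{3-k}$), which by the relations $\mathcal{N}_{6b} = \text{Dual}(\mathcal{N}_{6a})$, $\mathcal{N}_{6c} = \text{Inv}(\mathcal{N}_{6a})$, $\mathcal{N}_{6d} = \text{GDu}(\mathcal{N}_{6a})$ take the realization of $\mathcal{N}_{6a}$ into that of the other three graphs. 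These three substitutions are precisely those defining $\zeta_2, \zeta_3, \zeta_4$ from $\zeta_1$, so Conditions~2, 4, 3 are the images of Condition~1 under the three operations.

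For $\mathcal{N}_{6a}$ itself, I would first compute its impedance $\hat{Z}(s)$ as a rational function in $s$ with coefficients polynomial in the five element values $c_1, c_2, k_1, k_2, b_1$, and then enforce $\hat{Z}(s) \equiv a(s)/d(s)$. Matching the leading and trailing coefficients of numerator and denominator isolates $c_1 = d_0/a_0$ and $c_2 = d_3/a_3$ immediately. For the remaining three unknowns I would introduce the auxiliary parameter $\zeta_1$ through the relation $b_1 = (a_0 d_1 - \zeta_1)/a_0^2$ (equivalently via $k_1$ or $k_2$). The middle coefficient equations then become polynomial equations in $\zeta_1$ alone; algebraic elimination collapses them into exactly the two stated conditions, namely the quadratic $a_3 \zeta_1^2 - a_0 \mathcal{B}_{23} \zeta_1 - a_0^2 d_3(a_1 d_1 - a_0 d_2) = 0$ and the cubic $\zeta_1^3 - \mathcal{M}_{11} \zeta_1^2 + a_0 a_1 d_0 d_1 \zeta_1 - a_0^3 d_0^2 d_3 = 0$. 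The explicit root formula \eqref{eq: zeta} follows from the quadratic together with the identity $\mathcal{B}_{23}^2 + 4 a_3 d_3 (a_1 d_1 - a_0 d_2) = \mathcal{M}_{23}^2 - 4 a_0 a_3 d_2 d_3$, and the reality of $\zeta_1$ requires $\mathcal{M}_{23}^2 - 4 a_0 a_3 d_2 d_3 \geq 0$. Positivity of $k_2$ forces $\zeta_1 > 0$, while positivity of $k_1$ and $b_1$ forces $\zeta_1 < a_1 d_0$ and $\zeta_1 < a_0 d_1$, respectively, giving the bracket $0 < \zeta_1 < \min\{a_1 d_0, a_0 d_1\}$.

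The main obstacle will be the symbolic elimination that collapses the coefficient equations in $k_1, k_2, b_1$ into exactly these two polynomial conditions with the clean coefficients $\mathcal{B}_{23}$, $\mathcal{M}_{11}$, $a_0 a_1 d_0 d_1$, $a_0^3 d_0^2 d_3$, and $a_0^2 d_3(a_1 d_1 - a_0 d_2)$ appearing in the statement; this relies on carefully unfolding the definitions of $\mathcal{B}_{ij}$ and $\mathcal{M}_{ij}$. The sufficiency direction then requires the converse verification: substituting \eqref{eq: N6a element values} back into $\hat{Z}(s)$ and reducing it identically to $a(s)/d(s)$ by using both the quadratic and the cubic identities for $\zeta_1$ as side relations. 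This is long but routine, and in keeping with the style of Lemmas~\ref{lemma: realizability condition of N2a}--\ref{lemma: realizability condition of N5a} the bulk of the computation can be deferred to the supplementary material \cite{WC_sup}.
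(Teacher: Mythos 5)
Your proposal is correct and follows essentially the same route as the paper's proof in Appendix~C: reduce to $\mathcal{N}_{6a}$ via the duality/frequency-inversion principles, match the eight coefficient equations to pin down $c_1=d_0/a_0$ and $c_2=d_3/a_3$, parametrize the remaining unknowns by $\zeta_1$ (the paper does so through $k_2 = a_0d_0d_3/(a_3\zeta_1)$, arising as a root of the same quadratic whose discriminant gives $\mathcal{M}_{23}^2-4a_0a_3d_2d_3\geq 0$), extract the quadratic and cubic side conditions from the two remaining coefficient equations, and verify sufficiency by back-substitution. The only cosmetic discrepancy is your pairing of Conditions~2, 4, 3 with the three operations (the paper's $\zeta_2,\zeta_3,\zeta_4$ correspond to duality, frequency inversion, and frequency-inverse duality in that order), which does not affect the disjunctive statement of the lemma.
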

\begin{proof}
See Appendix~\ref{appendix: N6a} for details.
\end{proof}

Then, combining Lemmas~\ref{lemma: third-order configurations}--\ref{lemma: realizability condition of N6a}, the following   Theorem~\ref{theorem: third-degree five-element series-parallel network} can be proved, which presents a necessary and sufficient condition for a bicubic impedance $Z(s)$ in the form of \eqref{eq: three-degree impedance} to be realizable as a five-element series-parallel network.

\begin{theorem}  \label{theorem: third-degree five-element series-parallel network}
Consider a bicubic impedance $Z(s)$ in the form of \eqref{eq: three-degree impedance}, where $a_i, d_j > 0$ for $i, j = 0, 1, 2, 3$, $\delta(Z(s)) = 3$, and there is not any pole or zero on $j \mathbb{R} \cup  \infty$. Then, $Z(s)$ is realizable as a five-element series-parallel network, if and only if one of the conditions in Lemmas~\ref{lemma: realizability condition of N1a}--\ref{lemma: realizability condition of N6a} holds.
\end{theorem}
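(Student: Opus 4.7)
The plan is to obtain the theorem as a direct synthesis of the preparatory lemmas, treating it essentially as a combination result whose substantive content has already been established. The argument splits cleanly into sufficiency and necessity, and both directions are quite short once Lemmas~\ref{lemma: third-order configurations}--\ref{lemma: realizability condition of N6a} are in hand.

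For sufficiency, I would assume that at least one of the conditions listed across Lemmas~\ref{lemma: realizability condition of N1a}--\ref{lemma: realizability condition of N6a} holds. By the corresponding lemma, $Z(s)$ is then realizable as one of the configurations in Figs.~\ref{fig: N1}--\ref{fig: N6}, with the explicit element values given in \eqref{eq: N1a element values}--\eqref{eq: N6a element values} (or the analogous expressions for the dual/inverse/frequency-inverse-dual configurations obtained via the principle-of-duality maps $\text{GDu}$, $\text{Inv}$, $\text{Dual}$). Each such configuration is, by inspection, a series-parallel network with exactly five elements, so $Z(s)$ is realizable as a five-element series-parallel network.

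For necessity, I would assume that $Z(s)$ is realizable as a five-element series-parallel network. By Lemma~\ref{lemma: third-order configurations}, this realization may be assumed (up to equivalence) to be one of the configurations in Figs.~\ref{fig: N1}--\ref{fig: N6}, since those figures were designed to be exhaustive for the five-element series-parallel case under the standing assumptions $a_i, d_j > 0$ and absence of poles or zeros on $j\mathbb{R} \cup \infty$. Applying the appropriate one of Lemmas~\ref{lemma: realizability condition of N1a}--\ref{lemma: realizability condition of N6a} to whichever family contains that realization, the associated condition on the coefficients $a_i, d_j$ (expressed through $\mathcal{B}_{ij}$, $\mathcal{M}_{ij}$, $\Delta_1$, $\Delta_2$, and where relevant the auxiliary quantities $\zeta_1,\ldots,\zeta_4$) must hold. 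Thus one of the six enumerated conditions is satisfied.

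Because the theorem is a union-of-cases statement over six configuration families, the only real subtlety is logical bookkeeping rather than new computation: I must make sure that (i) the list in Lemma~\ref{lemma: third-order configurations} is genuinely exhaustive so that no five-element series-parallel realization escapes the case analysis, and (ii) the realizability conditions in Lemmas~\ref{lemma: realizability condition of N1a}--\ref{lemma: realizability condition of N6a} are stated as iff statements, so that each direction of the theorem can invoke them cleanly. The main obstacle, such as it is, has already been absorbed into the preparatory lemmas: the genuinely technical work lies in the combinatorial enumeration of Lemma~\ref{lemma: third-order configurations} (via topological restrictions from Lemmas~\ref{lemma: graph constraint}--\ref{lemma: number of elements}) and in verifying the element-value formulas of Lemmas~\ref{lemma: realizability condition of N1a}--\ref{lemma: realizability condition of N6a}. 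Given those, the proof of Theorem~\ref{theorem: third-degree five-element series-parallel network} itself reduces to a one-paragraph ``combine the above lemmas'' argument.
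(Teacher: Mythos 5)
Your proposal is correct and matches the paper's own proof, which likewise dispatches the theorem in a few lines by combining the exhaustiveness of the configuration list in Lemma~\ref{lemma: third-order configurations} with the if-and-only-if realizability conditions of Lemmas~\ref{lemma: realizability condition of N1a}--\ref{lemma: realizability condition of N6a}. Your explicit split into sufficiency and necessity, and your note that the substantive work resides in the preparatory lemmas, is exactly the intended structure.
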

\begin{proof}
By Lemma~\ref{lemma: third-order configurations}, the bicubic impedance $Z(s)$ in this theorem  is realizable as a five-element series-parallel network if and only if $Z(s)$ is realizable as one of the configurations in Figs.~\ref{fig: N1}--\ref{fig: N6}. Since the necessary and sufficient conditions for  $Z(s)$  to be realizable as the configurations in Figs.~\ref{fig: N1}--\ref{fig: N6} are shown in Lemmas~\ref{lemma: realizability condition of N1a}--\ref{lemma: realizability condition of N6a}, this theorem can be proved.
\end{proof}

\subsection{Realizations as Five-Element Non-Series-Parallel Networks}

For the realizations as five-element non-series-parallel networks, the following Lemmas~\ref{lemma: third-order non-series-parallel configurations}--\ref{lemma: realizability condition of N11} will be utilized to prove Theorem~\ref{theorem: third-degree five-element non-series-parallel network}.

\begin{lemma}  \label{lemma: third-order non-series-parallel configurations}
Consider a bicubic impedance $Z(s)$ in the form of \eqref{eq: three-degree impedance}, where $a_i, d_j > 0$ for $i, j = 0, 1, 2, 3$, $\delta(Z(s)) = 3$, and there is not any pole or zero on $j \mathbb{R} \cup  \infty$. $Z(s)$ is realizable as a five-element non-series-parallel network, if and only if $Z(s)$ is realizable as one of the configurations in Figs.~\ref{fig: N7}--\ref{fig: N11}.
\end{lemma}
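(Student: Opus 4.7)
The plan is to mirror the structure of the proof of Lemma~\ref{lemma: third-order configurations}, now for the non-series-parallel case. Sufficiency is immediate: each configuration in Figs.~\ref{fig: N7}--\ref{fig: N11} is by inspection a five-element non-series-parallel network, so if $Z(s)$ is realizable as one of them it is realizable as a five-element non-series-parallel network. The whole content is in necessity, which I would prove by an exhaustive enumeration of the admissible graphs followed by an enumeration of admissible element labellings, reduced modulo the four symmetries $\mathrm{Inv}$, $\mathrm{GDu}$, and $\mathrm{Dual}$.

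First I would fix the underlying network graph. By Lemma~\ref{lemma: number of elements} any realization has at least five elements, so here we need exactly five. A two-terminal network is non-series-parallel iff its associated two-terminal graph contains a Wheatstone-bridge subgraph. The only connected two-terminal graph on five edges that is non-series-parallel (and has no self-loops or parallel edges that would reduce the element count) is the bridge graph on four vertices, i.e.\ two parallel paths of length two with a single bridging edge between the two interior vertices. Thus every five-element non-series-parallel realization has this bridge topology, and the terminals $a,a'$ must be the two vertices of degree two.

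Second I would enumerate element labellings of this bridge graph by dampers, springs, and inerters. By Lemma~\ref{lemma: number of elements} at least three of the five edges must be energy-storage elements, so the number of dampers is one or two. For each damper count I would list the distinct placements of dampers on the edges of the bridge graph, and then for each placement list the ways to assign springs/inerters to the remaining energy-storage edges. At this point I would invoke Lemma~\ref{lemma: graph constraint} to discard every labelling that contains a spring-only or inerter-only path from $a$ to $a'$, or a spring-only or inerter-only cut-set separating $a$ from $a'$; and I would invoke Lemma~\ref{lemma: lossless passive networks} to discard every labelling in which the bridge edge or any of the four outer edges forms a lossless subnetwork appearing in series or parallel at the port (here the latter is essentially absorbed into the series-parallel decomposition obstruction, but I would still check it directly). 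The resulting surviving labellings are then grouped into orbits under the group generated by $\mathrm{Inv}$, $\mathrm{GDu}$, and $\mathrm{Dual}$, which acts on the labelled bridge graph (note that $\mathrm{GDu}$ maps the bridge graph to itself, since its dual is again a bridge graph with terminals at the degree-two vertices).

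The expected output is exactly five orbits, represented by $\mathcal{N}_{7a},\dots,\mathcal{N}_{11a}$, each of size at most four, with each orbit contributing the quartet in Figs.~\ref{fig: N7}--\ref{fig: N11}. The main obstacle is keeping the case analysis finite and transparent: there are on the order of a few hundred raw labellings, and the bookkeeping of which ones are eliminated by a $k$-$\mathcal{P}$, a $b$-$\mathcal{C}$, etc., and which ones are $\mathrm{Inv}/\mathrm{GDu}/\mathrm{Dual}$-equivalent to one already listed, must be carried out systematically. I would organize the enumeration by the position of the dampers (one damper: five cases up to bridge symmetry; two dampers: a handful of cases), and for each position tabulate which energy-storage labellings survive the path/cut-set constraints; the four symmetries then collapse each table to a single representative, which I would match against the representatives $\mathcal{N}_{7a},\ldots,\mathcal{N}_{11a}$. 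With that bookkeeping in place, the lemma follows.
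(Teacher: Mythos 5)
Your overall strategy (fix the bridge topology, enumerate labellings, filter by Lemmas~\ref{lemma: graph constraint} and \ref{lemma: number of elements}, quotient by $\mathrm{Inv}$/$\mathrm{GDu}$/$\mathrm{Dual}$) matches the paper's up to a point, and your observations that the unique five-edge non-series-parallel two-terminal graph is the Wheatstone bridge with terminals at the degree-two vertices, and that the damper count is one or two, are both correct. But there is a genuine gap in the one-damper (four energy-storage element) branch. Your filters are purely topological and only necessary: after applying Lemma~\ref{lemma: graph constraint} and quotienting by the symmetries, the enumeration does \emph{not} produce exactly the five orbits of Figs.~\ref{fig: N7}--\ref{fig: N11}. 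It produces a sixth orbit, the quartet $\mathcal{N}_{12a},\mathcal{N}_{12b}$ of Fig.~\ref{fig: N12}, which contains no forbidden $k$- or $b$-path or cut-set and is not equivalent to $\mathcal{N}_{11}$. (Lemma~\ref{lemma: lossless passive networks} is of no help here either: a bridge network admits no series or parallel decomposition at the port, so that filter is vacuous in the non-series-parallel case; the paper's proof accordingly uses only Lemmas~\ref{lemma: graph constraint} and \ref{lemma: number of elements}.) Your expectation of ``exactly five orbits'' is therefore wrong as stated, and your proof of necessity is incomplete: a $Z(s)$ realizable as $\mathcal{N}_{12a}$ would be a five-element non-series-parallel realization not covered by Figs.~\ref{fig: N7}--\ref{fig: N11}.

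The missing idea is algebraic, not combinatorial. With four energy storage elements and one damper, the network impedance is generically a ratio of two quartics, so realizing a bicubic requires a common factor between numerator and denominator, i.e.\ the vanishing of their resultant. The paper shows that for $\mathcal{N}_{12a}$ the resultant is $k_1^{-3}k_2^{-1}b_1b_2\,h_2^2$ with $h_2$ an explicit polynomial in the element values, and that forcing $h_2=0$ with positive element values collapses the McMillan degree to at most two; hence $\mathcal{N}_{12a}$ (and its dual) can never realize an impedance satisfying the hypotheses, which is what removes the sixth orbit. Symmetrically, you also need to verify that $\mathcal{N}_{11}$ --- which is likewise generically biquartic --- \emph{can} achieve the required cancellation with positive element values and yield a bicubic with no pole or zero on $j\mathbb{R}\cup\infty$; the paper does this with an explicit numerical example. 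Without both of these non-topological arguments, the enumeration alone does not establish the lemma.
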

\begin{proof}
See Appendix~\ref{appendix: non-series-parallel configurations} for details, where Lemmas~\ref{lemma: graph constraint} and \ref{lemma: number of elements} are utilized in the proof.
\end{proof}

The above lemma (Lemma~\ref{lemma: third-order non-series-parallel configurations}) presents a set of realization configurations in Figs.~\ref{fig: N7}--\ref{fig: N11} that can cover all the cases of five-element non-series-parallel realizations. Then, the realizability conditions of these configurations need to be derived, which will be shown in the following Lemmas~\ref{lemma: realizability condition of N7a}--\ref{lemma: realizability condition of N11}.

\begin{figure}[thpb]
      \centering
      \subfigure[]{
      \includegraphics[scale=0.9]{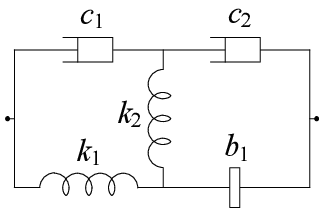}
      \label{fig: N7a}}
      \subfigure[]{
      \includegraphics[scale=0.9]{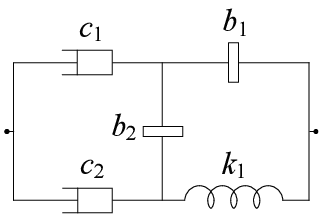}
      \label{fig: N7b}}
      \subfigure[]{
      \includegraphics[scale=0.9]{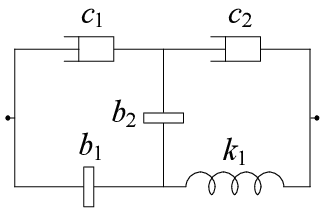}
      \label{fig: N7c}}
      \subfigure[]{
      \includegraphics[scale=0.9]{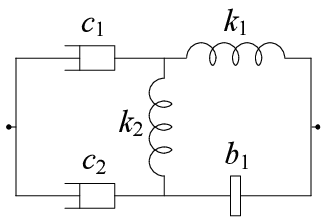}
      \label{fig: N7d}}
      \caption{Five-element non-series-parallel configurations that can realize the bicubic impedance $Z(s)$ in \eqref{eq: three-degree impedance}, whose one-terminal-pair labeled graphs are (a) $\mathcal{N}_{7a}$, (b) $\mathcal{N}_{7b}$, (c) $\mathcal{N}_{7c}$, and (d) $\mathcal{N}_{7d}$, respectively, satisfying $\mathcal{N}_{7b} = \text{Dual}(\mathcal{N}_{7a})$, $\mathcal{N}_{7c} = \text{Inv}(\mathcal{N}_{7a})$, and $\mathcal{N}_{7d} = \text{GDu}(\mathcal{N}_{7a})$.}
      \label{fig: N7}
\end{figure}

\begin{figure}[thpb]
      \centering
      \subfigure[]{
      \includegraphics[scale=0.9]{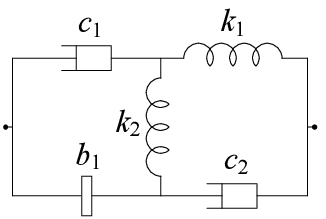}
      \label{fig: N8a}}
      \subfigure[]{
      \includegraphics[scale=0.9]{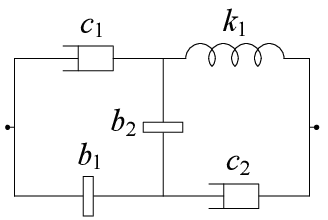}
      \label{fig: N8b}}
      \caption{Five-element non-series-parallel configurations that can realize the bicubic impedance $Z(s)$ in \eqref{eq: three-degree impedance}, whose one-terminal-pair labeled graphs are (a) $\mathcal{N}_{8a}$ and (b) $\mathcal{N}_{8b}$, respectively,  satisfying $\mathcal{N}_{8b} = \text{Dual}(\mathcal{N}_{8a})$, where $\text{Inv}(\mathcal{N}_{8a}) = \text{Dual}(\mathcal{N}_{8a})$
      and $\text{GDu}(\mathcal{N}_{8a}) = \mathcal{N}_{8a}$.}
      \label{fig: N8}
\end{figure}

\begin{lemma}  \label{lemma: realizability condition of N7a}
Consider a bicubic impedance $Z(s)$ in the form of \eqref{eq: three-degree impedance}, where $a_i, d_j > 0$ for $i, j = 0, 1, 2, 3$, $\delta(Z(s)) = 3$, and there is not any pole or zero on $j \mathbb{R} \cup  \infty$. Then, $Z(s)$ is realizable as one of the five-element non-series-parallel configurations in Fig.~\ref{fig: N7} (whose  one-terminal-pair labeled graph is $\mathcal{N}_{7a}$, $\mathcal{N}_{7b}$, $\mathcal{N}_{7c}$, or $\mathcal{N}_{7d}$), if and only if one of the following four conditions holds:
\begin{enumerate}
  \item[1.] $\mathcal{B}_{13} > 0$, $\mathcal{B}_{23} > 0$, $\mathcal{B}_{13}
  (a_2d_2 - \mathcal{B}_{23})  - a_2^2 d_0 d_3 = 0$, and
      $\mathcal{B}_{13} \mathcal{B}_{23} \Delta_1   - a_2^2a_3^2d_0^2 = 0$;
  \item[2.] $\mathcal{B}_{13} < 0$, $\mathcal{B}_{23} < 0$, $\mathcal{B}_{13}
  (a_2d_2 + \mathcal{B}_{23})  + a_0a_3d_2^2 = 0$, and
      $\mathcal{B}_{13}\mathcal{B}_{23} \Delta_2  - a_0^2d_2^2d_3^2 = 0$;
  \item[3.] $\mathcal{B}_{13} < 0$, $\mathcal{B}_{12} < 0$, $\mathcal{B}_{13}
  (a_1d_1 + \mathcal{B}_{12})  + a_1^2d_0d_3 = 0$, and
      $\mathcal{B}_{13}\mathcal{B}_{12} \Delta_1  - a_0^2 a_1^2 d_3^2 = 0$;
  \item[4.] $\mathcal{B}_{13} > 0$, $\mathcal{B}_{12} > 0$, $\mathcal{B}_{13}
  (a_1d_1 - \mathcal{B}_{12})  - a_0a_3d_1^2 = 0$, and
      $\mathcal{B}_{13}\mathcal{B}_{12} \Delta_2  - a_3^2d_1^2d_0^2 = 0$.
\end{enumerate}
Moreover, if Condition~1 holds, then $Z(s)$ is realizable as the configuration in Fig.~\ref{fig: N7a} whose element values can be expressed as
\begin{equation}  \label{eq: N7a element values}
\begin{split}
c_1 = \frac{d_0d_3}{\mathcal{B}_{13}}, ~~
c_2 = \frac{d_0}{a_0}, ~~
k_1 = \frac{\mathcal{B}_{23}}{a_2a_3}, ~~
k_2 = \frac{a_2d_0^2}{a_0 \mathcal{B}_{13}}, ~~
b_1 = \frac{a_0a_3}{\mathcal{B}_{23}}.
\end{split}
\end{equation}
\end{lemma}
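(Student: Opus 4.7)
The principles of duality, frequency inversion, and frequency-inverse duality recalled in Section~\ref{sec: notations} allow me to focus on Condition~1 alone: once the equivalence between realizability as the configuration in Fig.~\ref{fig: N7a} and Condition~1 (together with the element values in \eqref{eq: N7a element values}) is established, Conditions~2--4 and the corresponding element values for $\mathcal{N}_{7b}$, $\mathcal{N}_{7c}$, $\mathcal{N}_{7d}$ follow automatically under the coefficient substitutions $a_k\leftrightarrow d_k$, then $a_k\leftrightarrow a_{3-k}$ with $d_k\leftrightarrow d_{3-k}$, and finally $a_k\leftrightarrow d_{3-k}$, respectively, since each of these substitutions preserves the Bezoutian-type quantities $\mathcal{B}_{ij}$, $\Delta_i$ up to the expected index permutations.

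For sufficiency, assuming Condition~1 holds, I would substitute the values in \eqref{eq: N7a element values} into the closed-form impedance of the bridge in Fig.~\ref{fig: N7a}, obtained by a single nodal analysis, and verify directly that the result equals $Z(s)$ in \eqref{eq: three-degree impedance}. The cancellations in the numerator and denominator are driven by the two algebraic identities $\mathcal{B}_{13}(a_2d_2-\mathcal{B}_{23})=a_2^2d_0d_3$ and $\mathcal{B}_{13}\mathcal{B}_{23}\Delta_1=a_2^2a_3^2d_0^2$. Positivity of all five element values follows immediately from $\mathcal{B}_{13}>0$, $\mathcal{B}_{23}>0$, together with the standing assumption $a_i,d_j>0$ and the already-established $\Delta_1>0$ (which is forced by the second identity once both $\mathcal{B}_{13}$ and $\mathcal{B}_{23}$ are positive).

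For necessity, I would treat $c_1,c_2,k_1,k_2,b_1$ as positive indeterminates and compute the impedance $\tilde{Z}(s)$ of Fig.~\ref{fig: N7a} symbolically as a ratio of two cubics in $s$ whose coefficients are explicit polynomials in the five parameters. Matching $\tilde{Z}(s)$ with $Z(s)$ modulo a common positive scalar yields a system of seven polynomial equations in six unknowns. Solving the linear relations first for $c_1=d_0d_3/\mathcal{B}_{13}$ and $c_2=d_0/a_0$ reduces the problem to a three-variable nonlinear system in $(k_1,k_2,b_1)$; eliminating $b_1$ and one of the $k_i$ then forces the two Bezoutian equalities of Condition~1 and, back-substituting, reproduces the explicit values in \eqref{eq: N7a element values}. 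The positivity of each $k_i$ and $b_1$ in these closed forms is equivalent to $\mathcal{B}_{13}>0$ and $\mathcal{B}_{23}>0$.

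The principal difficulty is the elimination in the necessity direction: because $\mathcal{N}_{7a}$ is a bridge, each coefficient of $\tilde{Z}(s)$ is a sum of several degree-three monomials in $(c_1,c_2,k_1,k_2,b_1)$, so a brute-force Gröbner approach is unwieldy. To stay tractable I would exploit the symmetry between $\mathcal{B}_{13}$ and $\mathcal{B}_{23}$ in the final identities, taking a single resultant with respect to one parameter to collapse the system, mirroring the strategy used for the series-parallel cases in the proofs of Lemmas~\ref{lemma: realizability condition of N1a}--\ref{lemma: realizability condition of N6a}; the full computation would be deferred to the supplementary material \cite{WC_sup}.
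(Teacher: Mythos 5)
Your proposal follows essentially the same route as the paper's proof (Section~6 of the supplementary material): reduce to the single pair (Fig.~\ref{fig: N7a}, Condition~1) via the principles of duality, frequency inversion, and frequency-inverse duality; prove necessity by matching the eight coefficients of the bridge impedance against $Z(s)$ up to a positive scalar, solving sequentially for $c_2$, $c_1$, the scalar, $b_1$, $k_1$, $k_2$ (whence $\mathcal{B}_{13}>0$ and $\mathcal{B}_{23}>0$ from positivity) and letting the two leftover equations become the stated identities; and prove sufficiency by direct substitution of \eqref{eq: N7a element values}. The elimination is in fact tamer than you fear --- each coefficient of the bridge impedance is a short sum of monomials, so the paper's straightforward sequential substitution suffices without resultants or Gr\"obner machinery.
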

\begin{proof}
The method is similar to that of Lemma~\ref{lemma: realizability condition of N1a}, which can be referred to \cite[Section~6]{WC_sup} for details.
\end{proof}

\begin{figure}[thpb]
      \centering
      \subfigure[]{
      \includegraphics[scale=0.9]{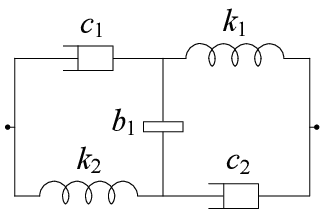}
      \label{fig: N9a}}
      \subfigure[]{
      \includegraphics[scale=0.9]{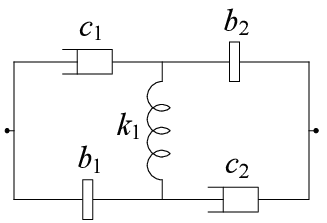}
      \label{fig: N9b}}
      \caption{Five-element non-series-parallel configurations that can realize the bicubic impedance $Z(s)$ in \eqref{eq: three-degree impedance}, whose one-terminal-pair labeled graphs are (a) $\mathcal{N}_{9a}$ and (b) $\mathcal{N}_{9b}$, respectively,  satisfying $\mathcal{N}_{9b} = \text{Dual}(\mathcal{N}_{9a})$, where $\text{Inv}(\mathcal{N}_{9a}) = \text{Dual}(\mathcal{N}_{9a})$
      and $\text{GDu}(\mathcal{N}_{9a}) = \mathcal{N}_{9a}$.}
      \label{fig: N9}
\end{figure}

\begin{figure}[thpb]
      \centering
      \subfigure[]{
      \includegraphics[scale=0.9]{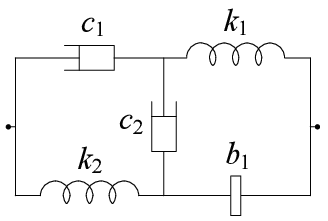}
      \label{fig: N10a}}
      \subfigure[]{
      \includegraphics[scale=0.9]{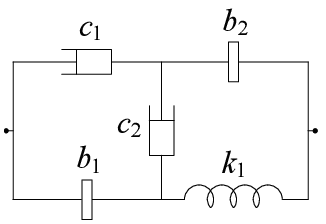}
      \label{fig: N10b}}
      \caption{Five-element non-series-parallel configurations that can realize the bicubic impedance $Z(s)$ in \eqref{eq: three-degree impedance}, whose one-terminal-pair labeled graphs are (a) $\mathcal{N}_{10a}$ and (b) $\mathcal{N}_{10b}$, respectively,  satisfying $\mathcal{N}_{10b} = \text{Dual}(\mathcal{N}_{10a})$, where $\text{Inv}(\mathcal{N}_{10a}) = \text{Dual}(\mathcal{N}_{10a})$
      and $\text{GDu}(\mathcal{N}_{10a}) = \mathcal{N}_{10a}$.}
      \label{fig: N10}
\end{figure}

\begin{figure}[thpb]
      \centering
      \includegraphics[scale=0.9]{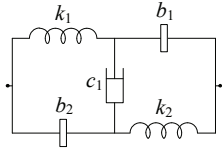}
      \caption{Five-element non-series-parallel configuration that can realize the bicubic impedance $Z(s)$ in \eqref{eq: three-degree impedance}, whose one-terminal-pair labeled graph is $\mathcal{N}_{11}$, where
      $\text{Dual}(\mathcal{N}_{11}) = \text{Inv}(\mathcal{N}_{11}) = \text{GDu}(\mathcal{N}_{11}) = \mathcal{N}_{11}$.}
      \label{fig: N11}
\end{figure}

Define the   notations $\Lambda_{1a}$ and $\Lambda_{1b}$ as
\begin{equation} \label{eq: Lambda1a}
\Lambda_{1a} := \frac{d_3^2 \mathcal{\mathcal{B}}_{11}}{\mathcal{B}_{13} \mathcal{M}_{13}},
\end{equation}
and
\begin{equation}  \label{eq: Lambda1b}
\begin{split}
\Lambda_{1b} :=  \frac{1}{2 a_0^2 \mathcal{B}_{33}}
\Bigg(\mathcal{B}_{13} \mathcal{M}_{13}  - \mathcal{B}_{11} \mathcal{B}_{33} \pm \sqrt{(\mathcal{B}_{13}^2 - \mathcal{B}_{11} \mathcal{B}_{33})(\mathcal{M}_{13}^2 - \mathcal{B}_{11} \mathcal{B}_{33})}  \Bigg)
\end{split}
\end{equation}
Then, the notation $\Lambda_{2a}$ and $\Lambda_{2b}$ can be respectively obtained from
$\Lambda_{1a}$ and $\Lambda_{1b}$ according to the conversion $a_k \leftrightarrow d_k$ for $k = 0, 1, 2, 3$.

\begin{lemma}   \label{lemma: realizability condition of N8a}
Consider a bicubic impedance $Z(s)$ in the form of \eqref{eq: three-degree impedance}, where $a_i, d_j > 0$ for $i, j = 0, 1, 2, 3$, $\delta(Z(s)) = 3$, and there is not any pole or zero on $j \mathbb{R} \cup  \infty$. Then, $Z(s)$ is realizable as one of the five-element non-series-parallel configurations in Fig.~\ref{fig: N8} (whose  one-terminal-pair labeled graph is $\mathcal{N}_{8a}$ or $\mathcal{N}_{8b}$), if and only if one of the following four conditions holds:
\begin{enumerate}
  \item[1.] $\mathcal{B}_{33} = 0$, $\mathcal{B}_{11} \mathcal{B}_{13} > 0$, $a_0^2 a_3 \Lambda_{1a}^3 - a_0a_3d_1 \Lambda_{1a}^2 + a_2d_0d_3 \Lambda_{1a} - d_0d_3^2 = 0$, and
  $a_0^3 \mathcal{M}_{13} \Lambda_{1a}^3 + a_0(2 \mathcal{B}_{11} \mathcal{M}_{13} - a_1a_3d_0^2) \Lambda_{1a}^2 + \mathcal{B}_{11} (d_3 \mathcal{B}_{11} - a_3d_0d_1) \Lambda_{1a} + a_3d_0^3d_3 = 0$;
  \item[2.] $\mathcal{B}_{33} \neq 0$, $(\mathcal{B}_{13}^2 - \mathcal{B}_{11} \mathcal{B}_{33})(\mathcal{M}_{13}^2 - \mathcal{B}_{11} \mathcal{B}_{33}) \geq 0$, $a_0^2 \Lambda_{1b} > \max\{0, -\mathcal{B}_{11} \}$,
      $a_0^2 a_3 \Lambda_{1b}^3 - a_0a_3d_1 \Lambda_{1b}^2 + a_2d_0d_3 \Lambda_{1b} - d_0d_3^2 = 0$, and
  $a_0^3 \mathcal{M}_{13} \Lambda_{1b}^3 + a_0(2 \mathcal{B}_{11} \mathcal{M}_{13} - a_1a_3d_0^2) \Lambda_{1b}^2 + \mathcal{B}_{11} (d_3 \mathcal{B}_{11} - a_3d_0d_1) \Lambda_{1b} + a_3d_0^3d_3 = 0$;
  \item[3.] $\mathcal{B}_{33} = 0$, $\mathcal{B}_{11} \mathcal{B}_{13} > 0$,  $d_0^2 d_3 \Lambda_{2a}^3 - a_1d_0d_3 \Lambda_{2a}^2 + a_0a_3d_2 \Lambda_{2a} - a_0a_3^2 = 0$, and
  $d_0^3 \mathcal{M}_{13} \Lambda_{2a}^3 - d_0(2 \mathcal{B}_{11} \mathcal{M}_{13} + a_0^2d_1d_3) \Lambda_{2a}^2 + \mathcal{B}_{11} (a_3 \mathcal{B}_{11} + a_0a_1d_3) \Lambda_{2a} + a_0^3a_3d_3 = 0$;
  \item[4.] $\mathcal{B}_{33} \neq 0$, $(\mathcal{B}_{13}^2 - \mathcal{B}_{11} \mathcal{B}_{33})(\mathcal{M}_{13}^2 - \mathcal{B}_{11} \mathcal{B}_{33}) \geq 0$, $d_0^2 \Lambda_{2b} > \max\{0, \mathcal{B}_{11} \}$,  $d_0^2 d_3 \Lambda_{2b}^3 - a_1d_0d_3 \Lambda_{2b}^2 + a_0a_3d_2 \Lambda_{2b} - a_0a_3^2 = 0$, and
  $d_0^3 \mathcal{M}_{13} \Lambda_{2b}^3 - d_0(2 \mathcal{B}_{11} \mathcal{M}_{13} + a_0^2d_1d_3) \Lambda_{2b}^2 + \mathcal{B}_{11} (a_3 \mathcal{B}_{11} + a_0a_1d_3) \Lambda_{2b} + a_0^3a_3d_3 = 0$.
\end{enumerate}
Moreover, if  Condition~1 or 2 holds, then $Z(s)$ is realizable as the configuration in Fig.~\ref{fig: N8a} whose element values can be expressed as
\begin{equation}  \label{eq: N8a element values}
\begin{split}
c_1 = \frac{d_0}{a_0},  ~~ c_2 = \frac{d_3}{a_3}, ~~ k_1 = \frac{d_0^2}{a_0^2 \gamma + \mathcal{B}_{11}}, ~~
k_2 = \frac{\gamma (a_0^2 \gamma + \mathcal{B}_{11})}{a_3d_0}, ~~
b_1 = \gamma,
\end{split}
\end{equation}
where $\gamma = \Lambda_{1a}$ when Condition~1 holds and $\gamma = \Lambda_{1b}$ when  Condition~2 holds.
\end{lemma}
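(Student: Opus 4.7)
The plan is as follows. Since the configuration $\mathcal{N}_{8a}$ satisfies $\text{GDu}(\mathcal{N}_{8a}) = \mathcal{N}_{8a}$ and $\text{Inv}(\mathcal{N}_{8a}) = \text{Dual}(\mathcal{N}_{8a}) = \mathcal{N}_{8b}$, realizability as any member of the quartet in Fig.~\ref{fig: N8} collapses to realizability as either $\mathcal{N}_{8a}$ or $\mathcal{N}_{8b}$. Conditions~3 and 4 are then obtained from Conditions~1 and 2 by the principle of duality under the replacement $a_k \leftrightarrow d_k$, which is precisely how $\Lambda_{2a}, \Lambda_{2b}$ are defined from $\Lambda_{1a}, \Lambda_{1b}$. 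Hence the essential task is to derive Conditions~1 and 2 for $\mathcal{N}_{8a}$, together with the element-value formula \eqref{eq: N8a element values}.

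For $\mathcal{N}_{8a}$, the approach is to compute its driving-point impedance as a ratio of two third-degree polynomials in $s$ with coefficients depending on $c_1, c_2, k_1, k_2, b_1$, and then match coefficients against \eqref{eq: three-degree impedance} up to an overall scalar. The two damper values $c_1 = d_0/a_0$ and $c_2 = d_3/a_3$ can be read off from the limits $Z(0)$ and $\lim_{s \to \infty} Z(s)$ (or equivalently from matching the $s^0$ and $s^3$ coefficients together with Lemma~\ref{lemma: graph constraint}). Setting $b_1 = \gamma$ as a free parameter and solving two further matched coefficients for $k_1$ and $k_2$ produces the expressions in \eqref{eq: N8a element values}. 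Substituting these back into the remaining matched coefficients leaves two polynomial equations in the single unknown $\gamma$: the cubic identity
\[
a_0^2 a_3 \gamma^3 - a_0 a_3 d_1 \gamma^2 + a_2 d_0 d_3 \gamma - d_0 d_3^2 = 0
\]
already appearing in the statement, and a second equation whose leading term in $\gamma$ is $a_0^2 \mathcal{B}_{33} \gamma^2$ modulo lower-order contributions whose constant term is proportional to $d_3^2 \mathcal{B}_{11}$.

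This structural observation produces the natural dichotomy of Conditions~1 and 2. If $\mathcal{B}_{33} = 0$, the second equation degenerates to a linear equation whose unique solution is precisely $\gamma = \Lambda_{1a}$ as in \eqref{eq: Lambda1a}, and the surviving constraints are the cubic identity and the sign requirement $\mathcal{B}_{11} \mathcal{B}_{13} > 0$ needed to make $\Lambda_{1a}$ positive. If $\mathcal{B}_{33} \neq 0$, it becomes a genuine quadratic whose two roots are exactly the two branches of $\Lambda_{1b}$ in \eqref{eq: Lambda1b}; reality of these roots is equivalent to $(\mathcal{B}_{13}^2 - \mathcal{B}_{11}\mathcal{B}_{33})(\mathcal{M}_{13}^2 - \mathcal{B}_{11}\mathcal{B}_{33}) \geq 0$, and one branch must additionally satisfy the cubic identity. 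Positivity of $k_1, k_2, b_1$ in \eqref{eq: N8a element values} forces $\gamma > 0$ and $a_0^2 \gamma + \mathcal{B}_{11} > 0$, which combine into $a_0^2 \Lambda_{1b} > \max\{0, -\mathcal{B}_{11}\}$; positivity of $c_1, c_2$ is automatic from $a_i, d_j > 0$.

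The main obstacle will be the elimination step that reduces the seven-variable coefficient-matching system to the pair of cleanly-stated polynomial equations in $\gamma$; the algebra is dense but mechanical once the parameterization $b_1 = \gamma$ is fixed. A secondary care point is the converse direction: under each listed condition one must verify that taking $\gamma = \Lambda_{1a}$ or $\gamma = \Lambda_{1b}$ yields strictly positive, finite values for all five elements and recovers $Z(s)$ exactly via \eqref{eq: N8a element values}. The inequalities and cubic identities in the statement are precisely what is needed for this verification to go through, and the corresponding claims for $\mathcal{N}_{8b}$ then follow by the $a_k \leftrightarrow d_k$ swap without further work, giving Conditions~3 and 4.
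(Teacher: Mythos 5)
Your proposal follows essentially the same route as the paper's proof in Appendix~E: coefficient matching for $\mathcal{N}_{8a}$, reading off $c_1 = d_0/a_0$ and $c_2 = d_3/a_3$, parameterizing by $b_1 = \gamma$, deriving the quadratic $a_0^2 \mathcal{B}_{33}\gamma^2 - (\mathcal{B}_{13}\mathcal{M}_{13} - \mathcal{B}_{11}\mathcal{B}_{33})\gamma + d_3^2\mathcal{B}_{11} = 0$ whose degeneration at $\mathcal{B}_{33}=0$ produces the split between $\Lambda_{1a}$ and $\Lambda_{1b}$, and transferring to $\mathcal{N}_{8b}$ by duality. The only slip is a miscount of the residual constraints: after $c_1,c_2,k_1,k_2,x$ are expressed in terms of $\gamma$, three (not two) relations in $\gamma$ remain --- the quadratic that determines $\gamma$ plus the \emph{two} cubic identities appearing in Conditions~1--2 --- but this does not affect the validity of the method.
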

\begin{proof}
See Appendix~\ref{appendix: N8a} for details.
\end{proof}

Define the notations $\Gamma_1$, $\Phi_1$, and $\Psi_1$ as
\begin{equation}  \label{eq: Gamma1}
\Gamma_1 := \frac{a_3d_0 \pm \sqrt{a_3d_0(a_3d_0 - 4a_0d_3)}}{2 d_0d_3},
\end{equation}
\begin{equation} \label{eq: Phi1}
\begin{split}
&\Phi_1 :=
\frac{1}{2a_0a_3d_0d_2\Gamma_1} \Bigg( a_1a_3d_0d_2 \Gamma_1^2
- (d_0^2d_3^2\Gamma_1^4 - a_0^2a_3^2)     \\
& ~~~~~ \pm
\sqrt{(a_1a_3d_0d_2\Gamma_1^2 - (d_0d_3\Gamma_1^2 - a_0a_3)^2)   (a_1a_3d_0d_2\Gamma_1^2
- (d_0d_3\Gamma_1^2 + a_0a_3)^2)} \Bigg),
\end{split}
\end{equation}
and
\begin{equation} \label{eq: Psi}
\Psi_1 := \frac{a_1d_2 + a_3d_0 \pm \sqrt{(a_1d_2 - a_3d_0)^2 - 4a_0a_3d_0d_3}}{2d_0d_2} - \frac{d_3}{d_2}\Gamma_1.
\end{equation}
Then, the notations $\Gamma_2$, $\Phi_2$, and $\Psi_2$ can be respectively obtained from
$\Gamma_1$, $\Phi_1$, and $\Psi_1$ according to the conversion $a_k \leftrightarrow d_k$ for $k = 0, 1, 2, 3$.

\begin{lemma}   \label{lemma: realizability condition of N9a}
Consider a bicubic impedance $Z(s)$ in the form of \eqref{eq: three-degree impedance}, where $a_i, d_j > 0$ for $i, j = 0, 1, 2, 3$, $\delta(Z(s)) = 3$, and there is not any pole or zero on $j \mathbb{R} \cup  \infty$. Then, $Z(s)$ is realizable as one of the five-element non-series-parallel configurations in Fig.~\ref{fig: N9} (whose  one-terminal-pair labeled graph is $\mathcal{N}_{9a}$ or $\mathcal{N}_{9b}$), if and only if one of the following two conditions holds:
\begin{enumerate}
  \item[1.] $0 < \Phi_1/\Gamma_1 < a_1/a_0$, $a_3d_0 - 4a_0d_3 \geq 0$,
  $(a_1a_3d_0d_2\Gamma_1^2 - (d_0d_3\Gamma_1^2 - a_0a_3)^2)(a_1a_3d_0d_2\Gamma_1^2
- (d_0d_3\Gamma_1^2 + a_0a_3)^2) \geq 0$,
$a_0(d_0d_3\Gamma_1^2 - a_0a_3)\Phi_1^3 - (a_1d_0d_3 \Gamma_1^2 + a_0a_3d_1
\Gamma_1 - 2a_0a_1a_3) \Gamma_1 \Phi_1^2 + a_1a_3(d_1\Gamma_1 - a_1)\Gamma_1^2 \Phi_1 - a_0a_3d_3 \Gamma_1^4 = 0$, and
$a_0^2d_0 \Phi_1^4 - 2 a_0a_1d_0 \Gamma_1 \Phi_1^3 +
d_0(a_1^2+a_0a_2)\Gamma_1^2 \Phi_1^2 + (a_0d_0d_3 \Gamma_1^2 - a_1a_2
d_0 \Gamma_1 - a_0^2a_3)\Gamma_1^2 \Phi_1 + a_0a_1a_3 \Gamma_1^3 = 0$;
  \item[2.] $0 <  \Phi_2/\Gamma_2 < d_1/d_0$, $a_0 d_3 - 4a_3 d_0 \geq 0$,
$(a_0a_2d_1d_3 \Gamma_2^2 - (a_0 a_3\Gamma_2^2 - d_0d_3)^2)(a_0a_2d_1d_3 \Gamma_2^2 - (a_0a_3\Gamma_2^2 + d_0d_3)^2) \geq 0$,
$d_0(a_0a_3\Gamma_2^2 - d_0d_3)\Phi_2^3 - (a_0a_3d_1 \Gamma_2^2 + a_1d_0d_3
\Gamma_2 - 2d_0d_1d_3) \Gamma_2 \Phi_2^2 + d_1 d_3(a_1\Gamma_2 - d_1) \Gamma_2^2 \Phi_2 -  a_3d_0d_3   \Gamma_2^4 = 0$, and
$a_0d_0^2   \Phi_2^4 - 2 a_0d_0d_1   \Gamma_2 \Phi_2^3 +
a_0(d_1^2+d_0d_2)\Gamma_2^2 \Phi_2^2 + (a_0a_3d_0   \Gamma_2^2 - a_0d_1d_2
\Gamma_2 - d_0^2d_3)\Gamma_2^2 \Phi_2 + d_0d_1d_3 \Gamma_2^3 = 0$.
\end{enumerate}
Moreover, if  Condition~1 holds, then $Z(s)$ is realizable as the configuration in Fig.~\ref{fig: N9a}   whose element values can be expressed as
\begin{equation}  \label{eq: N9a element values}
\begin{split}
c_1 =  \Gamma_1^{-1}, ~ c_2 = \frac{d_0d_3 \Gamma_1}{a_0a_3}, ~  k_1 =  \Phi_1^{-1}, ~
k_2 = \frac{d_0d_3 \Gamma_1^2}{a_3(a_1 \Gamma_1 - a_0 \Phi_1)},  ~
b_1 = \frac{d_3 \Gamma_1^2}{(a_1 \Gamma_1 - a_0 \Phi_1) \Phi_1}.
\end{split}
\end{equation}
\end{lemma}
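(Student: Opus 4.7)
The plan is to follow the coefficient-matching strategy used in the proof of Lemma~\ref{lemma: realizability condition of N1a}, adapted to the bridge-type non-series-parallel topology of Fig.~\ref{fig: N9a}. First, I will compute the impedance of $\mathcal{N}_{9a}$ as a closed-form rational function of $s$ whose numerator and denominator coefficients are polynomials in the five element values $c_1, c_2, k_1, k_2, b_1$ (with element impedances $1/c_i$, $k_i/s$, $b_i s$). Because $\mathcal{N}_{9a}$ is non-series-parallel, this step is carried out by nodal analysis on the underlying labeled graph rather than by recursive series/parallel reduction; after clearing denominators the result is a bicubic rational. Equating it with $a(s)/d(s)$ from \eqref{eq: three-degree impedance} (normalizing one coefficient to fix scale) yields a polynomial system in the five unknowns.

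The second step is to reduce this system to the parameterization in \eqref{eq: N9a element values}. Setting $\Gamma_1 := c_1^{-1}$ and $\Phi_1 := k_1^{-1}$, I expect that the matching of two extremal coefficients (controlled by the behavior of $Z(s)$ near $s=0$ and $s=\infty$) produces a quadratic in $\Gamma_1$ whose roots are those in \eqref{eq: Gamma1}; reality of $\Gamma_1$ is precisely $a_3d_0 - 4a_0d_3 \geq 0$, and it immediately gives $c_2 = d_0d_3\Gamma_1/(a_0a_3)$. Substituting back collapses the remaining equations into a system in $(k_1, k_2, b_1)$. Using $k_2 = d_0d_3\Gamma_1^2/(a_3(a_1\Gamma_1 - a_0\Phi_1))$ and $b_1 = d_3\Gamma_1^2/((a_1\Gamma_1 - a_0\Phi_1)\Phi_1)$ to eliminate $k_2$ and $b_1$ leaves two polynomial relations in $\Phi_1$ alone, which I expect to match the cubic and the quartic stated in Condition~1. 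The second discriminant condition on the product of two quadratics in $\Gamma_1$ arises as the reality requirement for the branch \eqref{eq: Phi1}, with $\Psi_1$ from \eqref{eq: Psi} appearing as the auxiliary second root generated by this elimination. The strict inequality $0 < \Phi_1/\Gamma_1 < a_1/a_0$ encodes positivity of the element values: $\Gamma_1 > 0$ and $\Phi_1 > 0$ follow from $c_1, k_1 > 0$, while the upper bound $a_1/a_0$ is exactly $a_1\Gamma_1 - a_0\Phi_1 > 0$, which is the denominator in both $k_2$ and $b_1$.

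For $\mathcal{N}_{9b}$ and Condition~2, no independent computation is required. Since $\mathcal{N}_{9b} = \text{Dual}(\mathcal{N}_{9a})$ by the caption of Fig.~\ref{fig: N9} (with $\text{Inv}(\mathcal{N}_{9a}) = \text{Dual}(\mathcal{N}_{9a})$), the principle of duality recorded in Section~\ref{sec: notations}, realized at the coefficient level by $a_k \leftrightarrow d_k$, converts the realization of $Z(s)$ as $\mathcal{N}_{9b}$ into the realization of $Z(s)$ as the admittance of $\mathcal{N}_{9a}$; after this substitution Condition~1 becomes Condition~2 with $(\Gamma_1, \Phi_1)$ replaced by $(\Gamma_2, \Phi_2)$. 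The main obstacle will be the algebraic reduction in the second paragraph: showing that the full coefficient-matching system collapses cleanly into the stated quadratic--cubic--quartic structure in $(\Gamma_1, \Phi_1)$, and tracking the $\pm$ choices in \eqref{eq: Gamma1}--\eqref{eq: Psi} so that the positivity constraint $0 < \Phi_1/\Gamma_1 < a_1/a_0$ selects the correct branches and is equivalent to the simultaneous positivity of all five element values.
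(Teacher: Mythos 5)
Your proposal is correct and follows essentially the same route as the paper's proof: compute the impedance of $\mathcal{N}_{9a}$, match coefficients against $a(s)/d(s)$ with a positive scale factor, use the leading and constant coefficients to get $c_1^{-1}+c_2^{-1}=a_3/d_3$ and $c_1^{-1}c_2^{-1}=a_0a_3/(d_0d_3)$ (hence the quadratic for $\Gamma_1$ and the condition $a_3d_0-4a_0d_3\geq 0$), eliminate $k_2$ and $b_1$, obtain $\Phi_1$ from a quadratic whose discriminant factors into the stated product of two quadratics in $\Gamma_1^2$, impose the residual cubic and quartic, read off $0<\Phi_1/\Gamma_1<a_1/a_0$ from positivity of the element values, and dispatch $\mathcal{N}_{9b}$ and Condition~2 by duality ($a_k\leftrightarrow d_k$). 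The only inaccuracy is your aside that $\Psi_1$ appears as an auxiliary root in this elimination --- $\Psi_1$ belongs to Lemma~\ref{lemma: realizability condition of N10a} and plays no role here --- but this does not affect the argument.
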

\begin{proof}
See Appendix~\ref{appendix: N9a} for details.
\end{proof}

\begin{lemma}   \label{lemma: realizability condition of N10a}
Consider a bicubic impedance $Z(s)$ in the form of \eqref{eq: three-degree impedance}, where $a_i, d_j > 0$ for $i, j = 0, 1, 2, 3$, $\delta(Z(s)) = 3$, and there is not any pole or zero on $j \mathbb{R} \cup  \infty$. Then, $Z(s)$ is realizable as one of the five-element non-series-parallel configurations in Fig.~\ref{fig: N10} (whose  one-terminal-pair labeled graph is $\mathcal{N}_{10a}$ or $\mathcal{N}_{10b}$), if and only if one of the following two conditions holds:
\begin{enumerate}
  \item[1.] $0 < \Psi_1 < a_1/d_0$, $a_3d_0 - 4a_0d_3 \geq 0$,  $(a_1d_2 - a_3d_0)^2 - 4a_0a_3d_0d_3 \geq 0$, $d_0^2 \Psi_1^3 - a_1d_0 \Psi_1^2 + a_2d_0\Gamma_1 \Psi_1 - a_0a_3\Gamma_1 = 0$, and $d_0^2 \Psi_1^3 + d_0(d_1\Gamma_1 - 2a_1) \Psi_1^2 - a_1(d_1 \Gamma_1 - a_1) \Psi_1 + a_0d_3 \Gamma_1^2 = 0$;
  \item[2.] $0 < \Psi_2 < d_1/a_0$, $a_0d_3 - 4a_3d_0 \geq 0$,
  $(a_2 d_1 - a_0 d_3)^2 - 4a_0 a_3 d_0 d_3 \geq 0$, $a_0^2 \Psi_2^3 - a_0 d_1 \Psi_2^2 + a_0 d_2 \Gamma_2 \Psi_2 - d_0 d_3 \Gamma_2 = 0$, and
  $a_0^2 \Psi_2^3 + a_0(a_1 \Gamma_2 - 2 d_1) \Psi_2^2 - d_1 (a_1 \Gamma_2 - d_1) \Psi_2 + a_3 d_0 \Gamma_2^2 = 0$.
\end{enumerate}
Moreover, if  Condition~1 holds, then $Z(s)$ is realizable as the configuration in Fig.~\ref{fig: N10a}   whose element values can be expressed as
\begin{equation}  \label{eq: N10a element values}
\begin{split}
c_1 = \Gamma_1^{-1}, ~  c_2 = \frac{d_0d_3 \Gamma_1}{a_0a_3}, ~
k_1 = \Psi_1^{-1}, ~
k_2 = \frac{d_0d_3 \Gamma_1}{a_3 (a_1 - d_0 \Psi_1)}, ~
b_1 = \frac{d_3 \Gamma_1}{(a_1 - d_0 \Psi_1)\Psi_1}.
\end{split}
\end{equation}
\end{lemma}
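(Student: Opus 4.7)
The plan is to follow the same pattern as Lemmas~\ref{lemma: realizability condition of N1a}, \ref{lemma: realizability condition of N6a}, \ref{lemma: realizability condition of N8a}, and \ref{lemma: realizability condition of N9a}: derive the impedance of the configuration in Fig.~\ref{fig: N10a} in closed form, match it coefficient by coefficient with \eqref{eq: three-degree impedance}, and read off both the element-value formulas and the realizability conditions as positivity/reality constraints. First I would compute $Z_{N_{10a}}(s)$ by mesh or nodal analysis of the bridge-type five-element network in Fig.~\ref{fig: N10a}; using the substitutions $\Gamma_1 = c_1^{-1}$ and $\Psi_1 = k_1^{-1}$ suggested by \eqref{eq: N10a element values}, one obtains a ratio of two polynomials in~$s$ whose coefficients are rational functions of $\Gamma_1, \Psi_1, c_2, k_2, b_1$.

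Second, by evaluating $Z_{N_{10a}}(s)$ at $s=\infty$ and $s=0$, one immediately gets $c_2 = d_3/a_3$ and a relation fixing $c_1$; the value of $c_1$ (equivalently $\Gamma_1$) must then satisfy the quadratic $a_0 a_3 \Gamma_1^2 - a_3 d_0 \Gamma_1 + a_0 d_3 = 0$ arising from matching intermediate coefficients, whose solution is precisely $\Gamma_1$ in \eqref{eq: Gamma1} and whose reality is equivalent to $a_3 d_0 - 4 a_0 d_3 \geq 0$. With $c_1, c_2$ so determined, the remaining matching equations reduce to a polynomial system in $\Psi_1, k_2, b_1$. Eliminating $k_2$ and $b_1$ via the ansatz $k_2 = d_0 d_3 \Gamma_1 / (a_3(a_1 - d_0 \Psi_1))$ and $b_1 = d_3 \Gamma_1 / ((a_1 - d_0 \Psi_1)\Psi_1)$ yields exactly the two cubic equations in $\Psi_1$ stated as the third and fourth equations of Condition~1, whose simultaneous solvability forces the second discriminant condition $(a_1 d_2 - a_3 d_0)^2 - 4 a_0 a_3 d_0 d_3 \geq 0$ (obtained by resultant-type elimination between the two cubics).

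Third, I would verify that the positivity of all five element values in \eqref{eq: N10a element values} is equivalent to $0 < \Psi_1 < a_1/d_0$ together with $\Gamma_1 > 0$ (which follows from $\Gamma_1$ being a positive root of the quadratic whose coefficients have sign pattern dictated by $a_i, d_j > 0$). This establishes that Condition~1 is necessary and sufficient for realizability as Fig.~\ref{fig: N10a}. Then Condition~2 and realizability as Fig.~\ref{fig: N10b} follow at once from the principle of duality ($a_k \leftrightarrow d_k$), since $\mathcal{N}_{10b} = \text{Dual}(\mathcal{N}_{10a})$ by the caption of Fig.~\ref{fig: N10}; the identities $\text{Inv}(\mathcal{N}_{10a}) = \text{Dual}(\mathcal{N}_{10a})$ and $\text{GDu}(\mathcal{N}_{10a}) = \mathcal{N}_{10a}$ ensure no further cases arise.

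The main obstacle I expect is the polynomial elimination in the middle step: the bridge topology prevents any series-parallel decomposition, so the impedance must be computed as a full rational expression and then the matching system solved by careful resultant manipulation. In particular, showing that the two cubic constraints on $\Psi_1$ are compatible \emph{exactly} when the stated discriminant condition $(a_1 d_2 - a_3 d_0)^2 - 4 a_0 a_3 d_0 d_3 \geq 0$ holds, and that the explicit form of $\Psi_1$ in \eqref{eq: Psi} solves both cubics, will require tracking signs and cancellations carefully; as in the proofs of Lemmas~\ref{lemma: realizability condition of N8a} and \ref{lemma: realizability condition of N9a}, the bulk of this bookkeeping can reasonably be deferred to an appendix.
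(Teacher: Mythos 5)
Your overall strategy is exactly the paper's: compute the impedance of the bridge configuration in Fig.~\ref{fig: N10a}, match its coefficients with those of \eqref{eq: three-degree impedance} up to a positive scale factor, solve for the element values in stages, read the realizability conditions off as reality/positivity constraints on the intermediate roots, and handle Fig.~\ref{fig: N10b} and Condition~2 by duality. However, three of your intermediate claims are wrong as stated, and one of them would derail the derivation. First, evaluating at $s=\infty$ and $s=0$ does \emph{not} give $c_2 = d_3/a_3$ (that is the pattern from $\mathcal{N}_{6a}$, $\mathcal{N}_{8a}$); for this topology it gives the symmetric relations $c_1^{-1}+c_2^{-1}=a_3/d_3$ and $c_1^{-1}c_2^{-1}=a_0a_3/(d_0d_3)$, so $\Gamma_1=c_1^{-1}$ satisfies $d_0d_3\Gamma_1^2 - a_3d_0\Gamma_1 + a_0a_3 = 0$ rather than your $a_0a_3\Gamma_1^2 - a_3d_0\Gamma_1 + a_0d_3=0$; the discriminant of your quadratic is $a_3(a_3d_0^2-4a_0^2d_3)$, which is \emph{not} equivalent to $a_3d_0-4a_0d_3\geq 0$, so your claimed equivalence would fail.

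The substantive issue is where the condition $(a_1d_2-a_3d_0)^2-4a_0a_3d_0d_3\geq 0$ comes from. It is not produced by a resultant-type elimination between the two cubics: after substituting the expressions for $k_2$ and $b_1$ (obtained from the $a_1$ and $d_3$ coefficient equations) into the single coefficient equation for $d_2$, one gets a \emph{quadratic} in $k_1^{-1}$, namely $d_0d_2k_1^{-2} - (a_3d_0+a_1d_2-2d_0d_3\Gamma_1)k_1^{-1} + a_1(a_3-d_3\Gamma_1)=0$, whose discriminant reduces (using the quadratic satisfied by $\Gamma_1$) to $(a_1d_2-a_3d_0)^2-4a_0a_3d_0d_3$ and whose root is precisely $\Psi_1$ in \eqref{eq: Psi}. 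The two cubics in Condition~1 are then the residual constraints coming from the remaining coefficient equations (for $a_2$ and $d_1$); they are not a compatibility system from which the discriminant could be extracted, and indeed they cannot even be stated until $\Psi_1$ is real. If you carry out the elimination in the order you propose you will not recover the stated form of the conditions; reordering the steps as above fixes the plan and the rest (positivity giving $0<\Psi_1<a_1/d_0$, duality for Condition~2) goes through as you describe.
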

\begin{proof}
The method is similar to that of Lemma~\ref{lemma: realizability condition of N9a}, which can be referred to \cite[Section~7]{WC_sup} for details.
\end{proof}

\begin{lemma}   \label{lemma: realizability condition of N11}
Consider a bicubic impedance $Z(s)$ in the form of \eqref{eq: three-degree impedance}, where $a_i, d_j > 0$ for $i, j = 0, 1, 2, 3$, $\delta(Z(s)) = 3$, and there is not any pole or zero on $j \mathbb{R} \cup  \infty$. Then, $Z(s)$ is realizable as  the five-element non-series-parallel configuration  in Fig.~\ref{fig: N11} (whose  one-terminal-pair labeled graph is $\mathcal{N}_{11}$), if and only if $\mathcal{B}_{13} = 0$, and there exists a positive root $T > 0$ for the equation $a_0d_0d_3 T^3 + (a_1d_0d_3 + a_0d_1d_3 - a_2d_0d_2) T^2 +
(a_1 d_1 d_3 - a_2 d_0 d_3 - a_3 d_0 d_2) T - a_3 d_0 d_3 = 0$ such that
$a_0d_2 T^2 + (a_1d_2 - 3a_0d_3) T + a_1d_3 \geq 0$, $(a_2^2d_2 - 4a_0a_3d_3) T^3 + (a_2^2 d_3 + 2a_2a_3d_2 - 4a_1a_3d_3) T^2 + a_3(a_3d_2 + 2a_2d_3) T + a_3^2d_3 \geq 0$, $a_1 T + a_2 - a_0(y_1z_1 + y_2z_2) = 0$, and $d_1 T + d_2 - d_0(y_1z_2 + y_2z_1) = 0$, where $y_1$ and $y_2$ are two positive roots of the following equation in $y$:
\begin{equation}  \label{eq: N11 L1 L2 equation}
d_0(d_2T + d_3) y^2 - (a_0T + a_1)(d_2T+d_3) y + a_3T(a_0T + a_1) = 0,
\end{equation}
and $z_1$ and $z_2$ are two positive roots of the following equation in $z$:
\begin{equation}  \label{eq: N11 C1 C2 equation}
a_3 T (a_0 T + a_1) z^2 - (a_2T+a_3)(d_2T+d_3) z + d_3T(d_2 T + d_3) = 0.
\end{equation}
Moreover, if  the condition of this lemma holds, then $Z(s)$ is realizable as the configuration in Fig.~\ref{fig: N11}, where
\begin{equation}  \label{eq: N11 element values}
c_1 = \frac{d_3}{a_3},~~ k_1 = y_1^{-1}, ~~ k_2 = y_2^{-1}, ~~ b_1 = z_1, ~~ b_2 = z_2.
\end{equation}
\end{lemma}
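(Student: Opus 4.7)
The plan is to compute the impedance of the configuration in Fig.~\ref{fig: N11} explicitly in terms of the five element values $c_1, k_1, k_2, b_1, b_2$, and then match this with the given $Z(s)$. Since $\mathcal{N}_{11}$ is invariant under $\text{Dual}$, $\text{Inv}$, and $\text{GDu}$, the network is a bridge-type configuration that is symmetric under independently swapping $(k_1, k_2)$ and $(b_1, b_2)$; hence its impedance depends on the spring pair and the inerter pair only through their elementary symmetric functions. A mesh or nodal analysis of the bridge (with $c_1$ as the diagonal branch connecting the midpoints of the two spring-inerter arms) yields $Z_{11}(s) = \alpha(s)/\beta(s)$, where $\alpha(s), \beta(s)$ are cubic polynomials in $s$ whose coefficients are polynomial expressions in $c_1$ and in $k_1+k_2$, $k_1 k_2$, $b_1+b_2$, $b_1 b_2$.

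For the necessity direction, matching $Z_{11}(s)$ with $Z(s)$ in \eqref{eq: three-degree impedance} produces (after normalization) seven coefficient equations in five unknowns. Comparing the leading and constant coefficients immediately yields $c_1 = d_3/a_3$ and simultaneously forces $\mathcal{B}_{13} = a_3 d_0 - a_0 d_3 = 0$, which encodes $Z(0) = Z(\infty)$ and reflects the inversion symmetry $\text{Inv}(\mathcal{N}_{11}) = \mathcal{N}_{11}$. To disentangle the remaining equations I would introduce the auxiliary variable $T$ as a ratio of certain intermediate polynomial quantities arising in the impedance expansion. With the substitutions $y_i = k_i^{-1}$ and $z_i = b_i$, the matching at the $s$ and $s^2$ levels splits into the two quadratics \eqref{eq: N11 L1 L2 equation} and \eqref{eq: N11 C1 C2 equation}, whose positive roots are exactly $(y_1, y_2)$ and $(z_1, z_2)$, while the mixed-symmetric couplings between the pairs yield the two linear constraints $a_1 T + a_2 = a_0(y_1 z_1 + y_2 z_2)$ and $d_1 T + d_2 = d_0(y_1 z_2 + y_2 z_1)$. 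Eliminating $y_i, z_i$ and imposing consistency of the still-unused coefficient matches forces $T$ itself to satisfy the displayed cubic equation.

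For the sufficiency direction, I would reverse this construction: start from a positive root $T > 0$ of the cubic, use the stated inequalities $a_0 d_2 T^2 + (a_1 d_2 - 3 a_0 d_3) T + a_1 d_3 \geq 0$ and its analogue to guarantee that the two quadratics \eqref{eq: N11 L1 L2 equation} and \eqref{eq: N11 C1 C2 equation} have real roots, and then apply Vieta's formulas together with the fact that the remaining coefficients of those quadratics are positive to conclude that both $y_1, y_2$ and both $z_1, z_2$ are positive. Assigning the element values by \eqref{eq: N11 element values} and expanding the bridge impedance then yields $Z(s)$ precisely, using the two linear constraints to supply the $s^2$ coefficients of numerator and denominator.

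The main obstacle will be the algebraic elimination in the necessity direction: because the bridge is not series-parallel, the impedance expansion is cumbersome, and the simultaneous polynomial matching is nonlinear in the five element values. The key insight making it tractable is the substitution producing $T$, which decouples the spring pair from the inerter pair through \eqref{eq: N11 L1 L2 equation} and \eqref{eq: N11 C1 C2 equation}; the cubic in $T$ then emerges as the single remaining consistency condition. A secondary technical point is to verify that the displayed inequalities on $T$ are equivalent to the positivity of the discriminants of the two quadratics (so that admissible element values actually exist), and that the inequalities $a_1 T + a_2 \geq 0$ and $d_1 T + d_2 \geq 0$ implicit in the linear constraints are automatic from the other hypotheses.
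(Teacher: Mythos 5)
There is a genuine gap at the very first step of your plan. The configuration $\mathcal{N}_{11}$ contains \emph{four} energy storage elements (two springs and two inerters) plus one damper, so a mesh/nodal analysis does not produce cubic polynomials $\alpha(s),\beta(s)$: the impedance is generically a ratio of two \emph{quartic} polynomials (the paper computes $a(s)=k_1^{-1}k_2^{-1}b_1b_2s^4+\cdots$ and $d(s)=c_1^{-1}k_1^{-1}k_2^{-1}b_1b_2s^4+\cdots$ explicitly). Realizing a bicubic therefore forces a pole--zero cancellation, and the entire role of the auxiliary variable $T$ is to parametrize that cancellation: the paper multiplies the numerator and denominator of the target $Z(s)$ by a common factor $(Ts+1)$ with $T>0$ and matches the resulting quartic coefficients against the network's quartic coefficients, yielding ten equations. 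The cubic equation in $T$, the two quadratics \eqref{eq: N11 L1 L2 equation}--\eqref{eq: N11 C1 C2 equation} (obtained from the sums and products $k_1^{-1}+k_2^{-1}$, $k_1^{-1}k_2^{-1}$, $b_1+b_2$, $b_1b_2$ expressed in terms of $T$), and the two mixed linear constraints all fall out of this specific matching. Your description of $T$ as ``a ratio of certain intermediate polynomial quantities arising in the impedance expansion,'' combined with ``seven coefficient equations in five unknowns,'' shows the cancellation mechanism is missing, and without it the claimed derivation of the cubic in $T$ does not go through.

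A secondary error: the invariance of $\mathcal{N}_{11}$ under $\text{Dual}$, $\text{Inv}$, and $\text{GDu}$ is a \emph{simultaneous} relabeling, so the impedance is symmetric only under the joint swap $(k_1,b_1)\leftrightarrow(k_2,b_2)$, not under independent swaps of the spring pair and the inerter pair. Indeed the $s^2$ coefficients involve $k_1^{-1}b_2+k_2^{-1}b_1$, which is precisely why the pairing constraints $a_1T+a_2-a_0(y_1z_1+y_2z_2)=0$ and $d_1T+d_2-d_0(y_1z_2+y_2z_1)=0$ appear and cannot be reduced to elementary symmetric functions of each pair separately. Your later remarks on the sufficiency direction (discriminant nonnegativity being equivalent to the two displayed inequalities after using $\mathcal{B}_{13}=0$, and Vieta's formulas giving positivity of the roots) are correct and match the paper, but the necessity argument needs to be rebuilt on the quartic-with-common-factor foundation.
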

\begin{proof}
See Appendix~\ref{appendix: N11} for details.
\end{proof}

Then, combining Lemmas~\ref{lemma: third-order non-series-parallel configurations}--\ref{lemma: realizability condition of N11}, the following   Theorem~\ref{theorem: third-degree five-element non-series-parallel network} can be proved, which presents a necessary and sufficient condition for a bicubic impedance $Z(s)$ in the form of \eqref{eq: three-degree impedance} to be realizable as a five-element non-series-parallel network.

\begin{theorem}
\label{theorem: third-degree five-element non-series-parallel network}
Consider a bicubic impedance $Z(s)$ in the form of \eqref{eq: three-degree impedance}, where $a_i, d_j > 0$ for $i, j = 0, 1, 2, 3$, $\delta(Z(s)) = 3$, and there is not any pole or zero on $j \mathbb{R} \cup  \infty$. Then, $Z(s)$ is realizable as a five-element non-series-parallel network, if and only if one of the conditions in Lemmas~\ref{lemma: realizability condition of N7a}--\ref{lemma: realizability condition of N11} holds.
\end{theorem}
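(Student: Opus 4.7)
The plan is to follow the same template already used to establish Theorem~\ref{theorem: third-degree five-element series-parallel network}: combine the topological covering lemma (Lemma~\ref{lemma: third-order non-series-parallel configurations}) with the eleven per-configuration realizability lemmas (Lemmas~\ref{lemma: realizability condition of N7a}--\ref{lemma: realizability condition of N11}). First I would invoke Lemma~\ref{lemma: third-order non-series-parallel configurations}, which (via Lemmas~\ref{lemma: graph constraint} and \ref{lemma: number of elements}) asserts that a bicubic $Z(s)$ satisfying the standing assumptions is realizable as a five-element non-series-parallel network if and only if it is realizable as one of the configurations in Figs.~\ref{fig: N7}--\ref{fig: N11}. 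These eleven configurations are organized into quartets (or singletons) closed under the transformations $\text{Dual}$, $\text{Inv}$, and $\text{GDu}$, so they exhaust the admissible five-element non-series-parallel topologies.

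Next I would substitute in the necessary and sufficient conditions on the coefficients $a_i$, $d_j$ already proved in Lemmas~\ref{lemma: realizability condition of N7a}--\ref{lemma: realizability condition of N11}. The sufficiency direction of the theorem is then immediate: if any listed condition holds, the corresponding lemma produces an explicit five-element non-series-parallel realization with element values given by \eqref{eq: N7a element values}--\eqref{eq: N11 element values}. The necessity direction is equally immediate: if $Z(s)$ admits such a realization, then by Lemma~\ref{lemma: third-order non-series-parallel configurations} it must be realizable as some $\mathcal{N}_{7a}$--$\mathcal{N}_{11}$ (up to the equivalences induced by $\text{Dual}$, $\text{Inv}$, and $\text{GDu}$ within each quartet), whereupon the matching condition from Lemmas~\ref{lemma: realizability condition of N7a}--\ref{lemma: realizability condition of N11} applies to the appropriate variant of the coefficients.

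There is essentially no fresh computation at this stage; the analytical weight was absorbed into the eleven preceding lemmas. The only point requiring care, and the closest thing to an obstacle, is the bookkeeping: I must check that the disjunction of all conditions stated in Lemmas~\ref{lemma: realizability condition of N7a}--\ref{lemma: realizability condition of N11} corresponds exactly, quartet by quartet, to the covering set of Lemma~\ref{lemma: third-order non-series-parallel configurations}, with no configuration silently omitted nor double-counted. Because each non-symmetric quartet in Figs.~\ref{fig: N7}--\ref{fig: N10} was constructed precisely through the coefficient substitutions $a_k \leftrightarrow d_k$, $a_k \leftrightarrow a_{3-k}$ together with $d_k \leftrightarrow d_{3-k}$, and $a_k \leftrightarrow d_{3-k}$ (i.e.\ the principles of duality, frequency inversion, and frequency-inverse duality), while $\mathcal{N}_{11}$ is fixed under all three, this correspondence is transparent and the proof concludes.
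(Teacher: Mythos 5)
Your proposal is correct and follows essentially the same two-step argument as the paper: invoke Lemma~\ref{lemma: third-order non-series-parallel configurations} to reduce realizability to the covering configurations in Figs.~\ref{fig: N7}--\ref{fig: N11}, then apply the necessary and sufficient conditions of Lemmas~\ref{lemma: realizability condition of N7a}--\ref{lemma: realizability condition of N11}. The additional bookkeeping you note about the quartet structure under $\text{Dual}$, $\text{Inv}$, and $\text{GDu}$ is already absorbed into those lemmas, so nothing further is needed.
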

\begin{proof}
By Lemma~\ref{lemma: third-order non-series-parallel configurations}, the bicubic impedance $Z(s)$ in this theorem  is realizable as a five-element non-series-parallel network if and only if $Z(s)$ is realizable as one of the configurations in Figs.~\ref{fig: N7}--\ref{fig: N11}. Since the necessary and sufficient conditions for  $Z(s)$  to be realizable as the configurations in Figs.~\ref{fig: N7}--\ref{fig: N11} are shown in Lemmas~\ref{lemma: realizability condition of N7a}--\ref{lemma: realizability condition of N11}, this theorem can be proved.
\end{proof}
\section{Numerical Examples and Positive-Real Controller Designs for Inerter-Based Control Systems}
\label{sec: examples}

In this section, two examples  in the     positive-real controller designs for a quarter-car suspension system will be presented for illustrations.

It is shown that  the bicubic impedances satisfying the conditions of this paper (realizable with five elements) can provide better ride comfort performances compared with the biquadratic positive-real impedances (realizable with no more than nine elements by the Bott-Duffin procedure).

Consider a  quarter-car   suspension system  as shown in Fig.~\ref{fig: suspension-model}, where the admittance $K(s) = 1/Z(s)$ of a passive mechanical network can be regarded as the positive-real controller as shown in Fig.~\ref{fig: suspension-control}. Here, the spring with   stiffness $k_t$ denotes   the
      vehicle tyre,  the sprung mass $m_s$ denotes
the vehicle body, the unsprung mass $m_u$ denotes the vehicle wheel,  $z_s$ denotes the displacement of the sprung mass, $z_u$ denotes the displacement of the unsprung mass, and $z_r$ denotes the road displacement.

\begin{figure}[thpb]
\centering
      \subfigure[]{
      \includegraphics[scale=1.1]{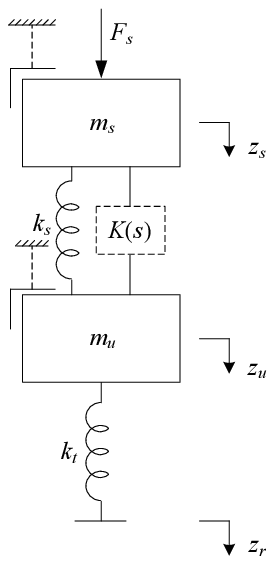}
      \label{fig: suspension-model}} ~~~~~
      \subfigure[]{
      \includegraphics[scale=0.85]{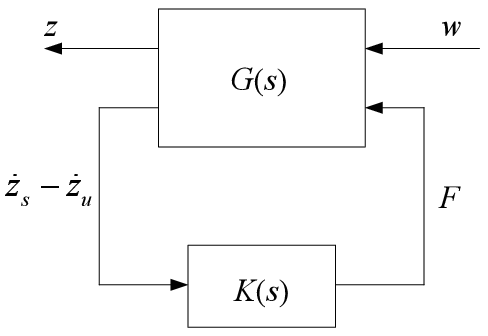}
      \label{fig: suspension-control}}
\caption{(a) A quarter-car vehicle suspension system model \cite{PS06,SW04},  where the spring $k_s$ and the passive network whose admittance $K(s) = 1/Z(s)$ constitute the suspension part.
(b) Control synthesis diagram with positive-real controller $K(s) = 1/Z(s)$. }
\end{figure}

As shown in \cite{PS06}, by Newton's Second Law, the motion equations of the quarter-car suspension system in Fig.~\ref{fig: suspension-model}
 can be formulated as
\begin{equation}  \label{eq: suspension model equations}
m_s \ddot{z}_s = F_s - F - k_s (z_s - z_u),  ~~~
m_u \ddot{z}_u = F + k_s (z_s - z_u) + k_t (z_r - z_u),
\end{equation}
where $F$ is the output of the controller $K(s)$ satisfying
$\hat{F} = K(s) (s\hat{z}_s - s\hat{z}_u)$. Here, $\hat{F}$, $\hat{z}_s$, and $\hat{z}_u$ denote the Laplace transforms of $F$, $z_s$, and $z_u$, respectively. Furthermore, let $w = [F_s, z_r]^T$ denote the input of the generalized plant, and let $z = [\dot{z}_s, z_s]^T$ denote the performance output of the generalized plant. As a consequence, by letting the state vector satisfy
$x = [\dot{z}_s, z_s, \dot{z}_u, z_u]^T$, the motion equations in \eqref{eq: suspension model equations} can be written in the state-space form as follows:
\begin{equation}  \label{eq: state space generalized plant}
\dot{x} = A x + B \left[
                    \begin{array}{c}
                      w \\
                      F \\
                    \end{array}
                  \right], ~~~
\left[
                    \begin{array}{c}
                      z \\
                      \dot{z}_s - \dot{z}_u \\
                    \end{array}
                  \right] =
C x,
\end{equation}
where
\begin{equation}  \label{eq: A B C}
A = \left[
      \begin{array}{cccc}
        0 & -\frac{k_s}{m_s} & 0 & \frac{k_s}{m_s} \\
        1 & 0 & 0 & 0 \\
        0 & \frac{k_s}{m_u} & 0 & -\frac{k_s + k_t}{m_u} \\
        0 & 0 & 1 & 0 \\
      \end{array}
    \right], ~~
B = \left[
      \begin{array}{ccc}
        \frac{1}{m_s} & 0 & -\frac{1}{m_s}   \\
        0 & 0 & 0  \\
        0 & \frac{k_t}{m_u} & \frac{1}{m_u}   \\
        0 & 0 & 0   \\
      \end{array}
    \right], ~~
C = \left[
      \begin{array}{cccc}
        1 & 0 & 0 & 0   \\
        0 & 1 & 0 & 0  \\
        1 & 0 & -1 & 0   \\
      \end{array}
    \right].
\end{equation}

Assume that $\{ A_k, B_k, C_k, D_k \}$ is a minimal realization of the positive-real controller $K(s)$, that is,
$K(s) = C_k (sI - A_k)^{-1} B_k + D_k$. Then,
\begin{equation}  \label{eq: state space controller}
\dot{x}_k = A_k x_k + B_k (\dot{z}_s - \dot{z}_u), ~~~
F = C_k x_k + D_k (\dot{z}_s - \dot{z}_u),
\end{equation}
where the dimension of $x_k$ is equal to the McMillan degree of $K(s)$, which is denoted as $n_k$.

Assume that $F_s = 0$. Then,
combining \eqref{eq: state space generalized plant}--\eqref{eq: state space controller}, the closed-loop system whose input is $z_r$ and output is $\dot{z}_s$ can be obtained as
\begin{equation}  \label{eq: state space closed-loop system}
\dot{x}_{cl}
= A_{cl} x_{cl}  + B_{cl} z_r, ~~~
\dot{z}_s = C_{cl} x_{cl},
\end{equation}
where $x_{cl} = [x, x_k]^T$, and
\begin{equation}  \label{eq: Acl Bcl Ccl closed-loop system}
A_{cl} = \left[
           \begin{array}{cc}
             A + B_3 D_k C_3 & B_3 C_k \\
             B_k C_3 & A_k \\
           \end{array}
         \right], ~~~
B_{cl} = \left[
           \begin{array}{c}
             B_2  \\
             \boldsymbol{0}_{n_k}   \\
           \end{array}
         \right], ~~~
C_{cl} = \left[
           \begin{array}{cc}
             C_1  &
             \boldsymbol{0}_{n_k}^T   \\
           \end{array}
         \right].
\end{equation}
Here, $B_2$ and $B_3$ denote the second and third columns of $B$, $C_1$ and $C_3$ denote the first and third rows of $C$, and
$\boldsymbol{0}_{n_k}$ denotes the zero column vector whose dimension is $n_k$.

As shown in \cite{SW04}, the ride comfort index, which is the root-mean-square value  of $\ddot{z}_s$, can be expressed as
\begin{equation} \label{eq: ride comfort index}
J_1 = 2 \pi \sqrt{V \kappa} \|s^{-1} T_{z_r \rightarrow \ddot{z}_s} \|_2 = 2 \pi \sqrt{V \kappa} \|T_{z_r \rightarrow \dot{z}_s} \|_2,
\end{equation}
where $V$ denotes the vehicle speed, $\kappa$ denotes the road roughness parameter, and $T_{z_r \rightarrow \dot{z}_s}$ denotes the transfer function from $z_r$ to $\dot{z}_s$.
The following lemma shown in \cite[pg.~25]{DFT09} can be utilized to derive an equivalent form of $J_1$ in \eqref{eq: ride comfort index}.

\begin{lemma} \cite[pg.~25]{DFT09}  \label{lemma: H2 norm}
Consider a SISO closed-loop system \eqref{eq: state space closed-loop system}. If $A_{cl}$ is stable, that is, $\Re(\lambda_i (A_{cl})) < 0$, then
the $H_2$ norm from the input $z_r$ to the output $\dot{z}_s$ satisfies
\begin{equation*}
\|T_{z_r \rightarrow \dot{z}_s} \|_2^2 = C_{cl} P C_{cl},
\end{equation*}
where the positive definite matrix $P > 0$ is the unique solution of the Lyapunov equation
\begin{equation} \label{eq: lyp equation}
A_{cl} P + P A_{cl}^T = - B_{cl} B_{cl}^T.
\end{equation}
\end{lemma}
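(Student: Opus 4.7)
The plan is to follow the standard route for the $H_2$-norm of a stable LTI system through the controllability Gramian. First, since $A_{cl}$ is stable, the impulse response of \eqref{eq: state space closed-loop system} from $z_r$ to $\dot{z}_s$ is the (scalar, because the system is SISO) function $g(t) = C_{cl} e^{A_{cl} t} B_{cl}$ for $t \geq 0$, with exponential decay $\|e^{A_{cl} t}\| \leq M e^{-\alpha t}$ where $\alpha := -\max_{i} \Re(\lambda_i(A_{cl})) > 0$. Parseval's theorem then gives
\begin{equation*}
\|T_{z_r \rightarrow \dot{z}_s}\|_2^2 = \int_0^\infty g(t)^2 \, dt = C_{cl} \left(\int_0^\infty e^{A_{cl} t} B_{cl} B_{cl}^T e^{A_{cl}^T t} \, dt \right) C_{cl}^T = C_{cl}\, P\, C_{cl}^T,
\end{equation*}
where $P := \int_0^\infty e^{A_{cl} t} B_{cl} B_{cl}^T e^{A_{cl}^T t} \, dt$ is the (finite, by exponential decay) controllability Gramian. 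This matches the formula in the lemma once $C_{cl}$ is interpreted as the row vector of \eqref{eq: Acl Bcl Ccl closed-loop system}.

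Second, I would verify that this $P$ solves \eqref{eq: lyp equation}. Differentiating the integrand gives
\begin{equation*}
\frac{d}{dt}\left(e^{A_{cl} t} B_{cl} B_{cl}^T e^{A_{cl}^T t}\right) = A_{cl}\, e^{A_{cl} t} B_{cl} B_{cl}^T e^{A_{cl}^T t} + e^{A_{cl} t} B_{cl} B_{cl}^T e^{A_{cl}^T t} A_{cl}^T,
\end{equation*}
and integrating from $0$ to $\infty$ yields $A_{cl} P + P A_{cl}^T = \lim_{t\to\infty} e^{A_{cl} t} B_{cl} B_{cl}^T e^{A_{cl}^T t} - B_{cl} B_{cl}^T = -B_{cl} B_{cl}^T$, where the boundary term at infinity vanishes by stability.

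Third, uniqueness follows from a standard spectral argument: if $P_1, P_2$ both solve \eqref{eq: lyp equation}, then $\Delta := P_1 - P_2$ satisfies $A_{cl}\Delta + \Delta A_{cl}^T = 0$, and since $\sigma(A_{cl}) \subset \mathbb{C}_-$ the Lyapunov operator $X \mapsto A_{cl} X + X A_{cl}^T$ has spectrum $\{\lambda_i + \lambda_j\}$ disjoint from $0$, forcing $\Delta = 0$. Positive semidefiniteness of $P$ is immediate from its representation as an integral of rank-one positive semidefinite matrices; the strict positive definiteness $P > 0$ claimed in the statement additionally uses controllability of $(A_{cl}, B_{cl})$, which is ensured by taking $(A_k, B_k, C_k, D_k)$ to be a minimal realization of $K(s)$ and combining it with the minimality of the plant realization. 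There is no serious obstacle here --- the main care points are the justification of absolute integrability via the exponential decay bound on $e^{A_{cl} t}$ and the interpretation of $C_{cl} P C_{cl}$ as the scalar $C_{cl} P C_{cl}^T$ that naturally arises for a SISO system.
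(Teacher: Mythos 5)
Your proof is correct and is precisely the standard controllability-Gramian argument; the paper itself gives no proof of this lemma, citing it as a known result from Doyle--Francis--Tannenbaum (where $C_{cl}PC_{cl}$ should indeed be read as $C_{cl}PC_{cl}^{T}$, as you note), and your derivation via the impulse response, Parseval, and the Lyapunov-equation characterization of the Gramian is exactly the argument that reference supplies. The only soft spot is your claim that $P>0$ (rather than merely $P\geq 0$) follows from minimality of the plant and controller realizations --- controllability of $(A_{cl},B_{cl})$ from the disturbance channel alone is not automatic from subsystem minimality --- but this does not affect the norm formula, which only requires $P\geq 0$ and the existence and uniqueness of the Lyapunov solution that you establish.
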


Furthermore, assuming that   $A_{cl}$ is stable, by Lemma~\ref{lemma: H2 norm},    the ride comfort index $J_1$ as in \eqref{eq: ride comfort index} can be equivalent to
\begin{equation} \label{eq: J1 equivalent form}
J_1 =  2 \pi \sqrt{V \kappa} (C_{cl} P C_{cl})^{1/2},
\end{equation}
where the positive definite matrix $P > 0$ is the unique solution of the Lyapunov equation in \eqref{eq: lyp equation}. It is obvious that $P$ is related to $K(s)$.
Therefore, the optimization problem of ride comfort $J_1$ is listed in the following procedure.

\begin{procedure}  \label{procedure: design process}
Consider a quarter-car   suspension system as in Fig.~\ref{fig: suspension-model}, whose motion equations satisfy the state-space form in \eqref{eq: state space generalized plant}. Assuming that $F_s = 0$, the steps of designing a positive-real controller $K(s)$ to minimize the ride comfort performance $J_1$ in \eqref{eq: ride comfort index} (or \eqref{eq: J1 equivalent form}) for the
closed-loop system in \eqref{eq: state space closed-loop system} are as follows.
\begin{enumerate}
  \item[1.] Choose the McMillan degree $n$ of the positive-real controller $K(s)$, which is the admittance of a passive damper-spring-inerter network. Then, the impedance can be written as $Z(s) := 1/K(s) = (a_n s^n +  \cdots + a_1 s + a_0)/(d_n s^n +   \cdots + d_1 s + d_0)$, where
        $a_i, d_j \geq 0$ for $i, j = 0, 1, ..., n$.  Determine the positive-real condition and choose the further constraint conditions of the $n$th-order $Z(s) = 1/K(s)$, which can guarantee $K(s)$ to be realizable as a specific class of passive damper-spring-inerter networks.
  \item[2.] Calculate a minimal realization $\{ A_k, B_k, C_k, D_k \}$ of $K(s)$ satisfying \eqref{eq: state space controller}, which is related to $a_i, d_j$ for $i, j = 0, 1, ..., n$.
  \item[3.] Then, optimize the following problem:
  \begin{equation*}
\begin{split}
&\mathop{\min}_{a_i,d_j} H_1:= C_{cl} P C_{cl},    \\
&\text{s.t.} ~~~ A_{cl} ~ \text{is stable}, ~  \\
& ~~~~~~~ P ~ \text{is the solution of \eqref{eq: lyp equation}},  \\ & ~~~~~~~ Z(s)=1/K(s) ~\text{is a class of positive-real impedances in Step~1},
\end{split}
  \end{equation*}
  where the optimization variables are the nonnegative coefficients $a_i, d_j$ of $K(s)$, and the optimal positive-real controller can be obtained.
  \item[4.] Calculate the optimal ride comfort performance by \eqref{eq: J1 equivalent form}, that is, $J_1 = 2 \pi \sqrt{V \kappa H_1}$.
  \item[5.] Making use of the results of network synthesis, realize the positive-real controller $K(s) = 1/Z(s)$ corresponding to the optimal performance as a damper-spring-inerter network.
\end{enumerate}
\end{procedure}

For the case (Case~A) when $Z(s) = 1/K(s)$ is a bicubic (third-order)  impedance  as in \eqref{eq: three-degree impedance},
where  $a_i, d_j > 0$ for $i, j = 0, 1, 2, 3$,
one can further assume that $a_i$ and $d_j$  satisfy the conditions in Theorem~\ref{theorem: third-degree five-element series-parallel network} or \ref{theorem: third-degree five-element non-series-parallel network}. Then, the class of positive-real controllers in Step~1 of Procedure\ref{procedure: design process} is chosen as above for this case.  This means that any damper-spring-inerter realization of the optimal positive-real controller $K(s) = 1/Z(s)$ contains no more than five elements, and the conditions in
Lemmas~\ref{lemma: realizability condition of N1a}--\ref{lemma: realizability condition of N6a}
and \ref{lemma: realizability condition of N7a}--\ref{lemma: realizability condition of N11}
are regarded as the optimization constraints in Step~3 of Procedure~\ref{procedure: design process}.

For the case (Case~B) when $Z(s) = 1/K(s)$ is any biquadratic (second-order) positive-real  impedance
\begin{equation} \label{eq: biquadratic form}
Z(s) = \frac{a_2 s^2 + a_1 s + a_0}{d_2 s^2 + d_1 s + d_0},
\end{equation}
where $a_i, d_j > 0$ for $i, j = 0, 1, 2$ and
$(\sqrt{a_2 d_0} - \sqrt{a_0 d_2})^2 \leq a_1 d_1$,  any damper-spring-inerter realization of the optimal positive-real controller $K(s) = 1/Z(s)$ contains no more than nine elements by using the \emph{Bott-Duffin procedure} \cite{BD49}. Then, the class of positive-real controllers in Step~1 of Procedure\ref{procedure: design process} is chosen as above for this case.

Let the parameters of the suspension model satisfy $m_s = 250$~kg, $m_u = 35$~kg, $k_t = 150$~kN/m, $V = 25$~m/s, and  $\kappa = 5\times 10^{-7}$~$\text{m}$/cycle, which are the same as those in \cite{SW04}.
Following Procedure~\ref{procedure: design process} where  the optimization solver
\emph{fmincon} in MATLAB is utilized in Step~3,  the optimal results of ride comfort $J_1$ for Case~A (solid line) and Case~B (dashed line) can be obtained  as shown in
Fig.~\ref{fig: J1},
where the static stiffness $k_s$ is a fixed value ranging from $10$~kN/m to $120$~kN/m. It is shown that for different values of static stiffness $k_s$ the optimal performance values $J_1$ for Case A is enhanced compared with the values for Case~B, and the percentage improvement can be over $9 \%$ for small values of $k_s$.  This means that the third-order positive-real controller $K(s)$ that is realizable as a five-element network using the results of this paper can provide better ride comfort performances than the second-order positive-real controller $K(s)$ that is realizable as a series-parallel (resp. non-series-parallel) network containing no more than nine (resp. eight) elements by using the Bott-Duffin procedure (resp. Pantell's modified Bott-Duffin procedure). Therefore, the class of third-order positive-real controllers realizable as a five-element network can provide both lower physical complexity and better system performances than the conventional second-order positive-real controllers, which can also illustrate the significance of this paper.

\begin{figure}[thpb]
      \centering
      \subfigure[]{
      \includegraphics[scale=0.52]{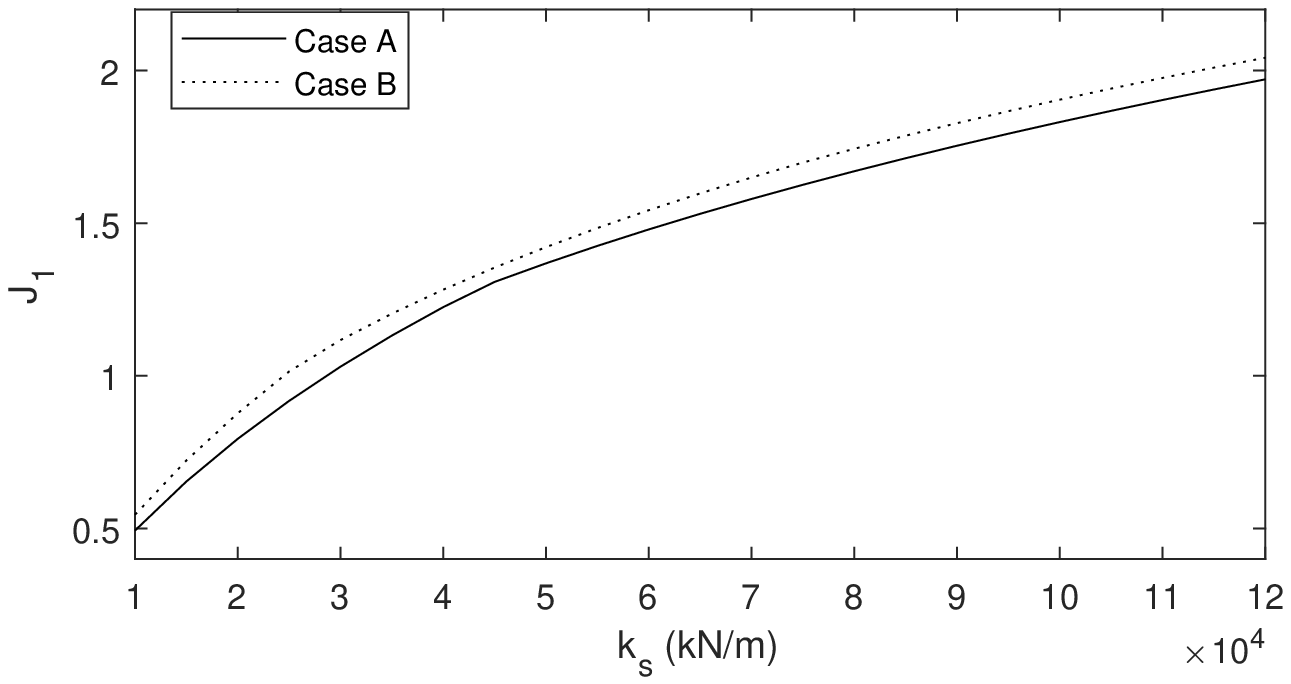}
      \label{fig: performances}}
      \subfigure[]{
      \includegraphics[scale=0.52]{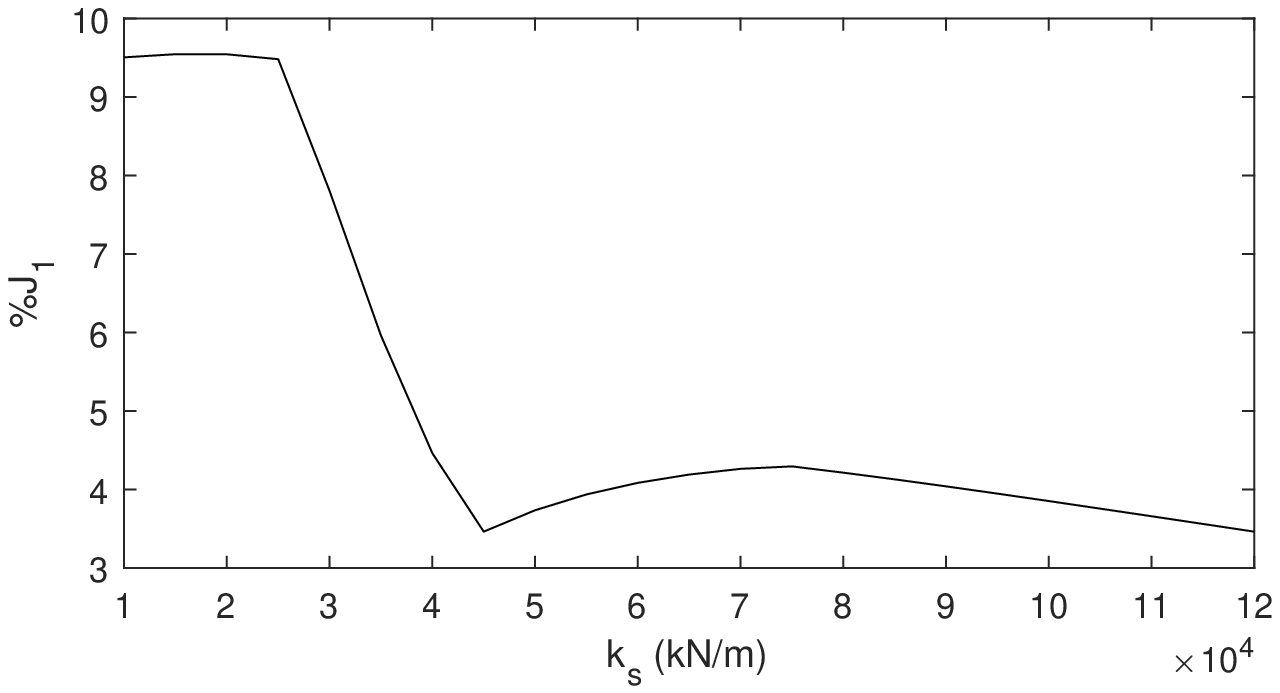}
      \label{fig: percentage-improvement}}
      \caption{(a) The optimal performances $J_1$ for the case when $Z(s) = 1/K(s)$ is any bicubic impedance satisfying the condition of  Theorem~3  (Case~A,  solid line), and the case when $Z(s) = 1/K(s)$ is any positive-real biquadratic impedance (Case~B,  dashed line),
       where the static stiffness $k_s$ ranges from $10$~kN/m to $120$~kN/m.
       (b) The percentage improvement of optimal performances for Case A and Case B, which is $(J_1^{(2)} - J_1^{(1)})/J_1^{(2)}  \times 100 \%$, where $J_1^{(1)}$ and $J_1^{(2)}$ are optimal performance values corresponding to Cases~A and B, respectively.}
      \label{fig: J1}
\end{figure}

The following two examples show the realization results of the positive-real controllers in the
above optimization designs when the   static stiffness $k_s$ satisfies $k_s = 25$~kN/m and $k_s = 70$~kN/m, respectively.

\begin{example}  \label{example: 01}
When $k_s = 25$~kN/m, the optimal value of $J_1$ for Case~A satisfies $J_1^{(1)} = 0.9182$, and the corresponding bicubic impedance $Z(s)$ is as in \eqref{eq: three-degree impedance}  with  $a_3 = 5.994 \times 10^{-4}$, $a_2 = 0.07188$, $a_1 = 1.529$, $a_0 = 14.818$, $d_3 = 1$, $d_2 = 5.005 \times 10^{-8}$, $d_1 = a_1 d_3 /a_3 = 2.55 \times 10^3$, and $d_0 = a_0 d_2/a_2 = 1.031 \times 10^{-5}$. Then, it can be checked that
$\mathcal{B}_{13} = a_3 d_0 - a_0 d_3 = -14.818 < 0$, $\mathcal{B}_{12} = a_2 d_0 - a_0 d_2 = 0$, $\mathcal{B}_{23} = a_3 d_1 - a_1 d_3 = 0$, and $\Delta_1 = a_1 a_2 - a_0 a_3 = 0.10102 > 0$, which implies that the condition of Lemma~5 holds. Therefore, $Z(s)$ is realizable as the configuration in Fig.~1(b) with $c_1 = 1.668 \times 10^3$~Ns/m, $c_2 = 6.96 \times 10^{-7}$~Ns/m, $b_1 = 172.097$~kg, $b_2 = 15.131$~kg, and $k_1 = 3.858 \times 10^4$~N/m, which is shown in Fig.~\ref{fig: Example 01 - a}. By the Bott-Duffin synthesis procedure,  $Z(s)$  is   realizable as the configuration in Fig.~\ref{fig: Example 01 - b} with $c_1 = 6.956 \times 10^{-7}$~Ns/m, $c_2 = 1.667 \times 10^3$~Ns/m, $c_3 = 1.668 \times 10^3$~Ns/m, $c_4 = 3.999 \times 10^{12}$~Ns/m, $k_1 = 5.41 \times 10^8$~N/m, $k_2 = 3.859 \times 10^4$~N/m, $k_3 = 1.838 \times 10^5$~N/m, $k_4 = 1.617 \times 10^4$~N/m, $b_1 = 5.141 \times 10^{-3}$~kg, $b_2 = 15.13$~kg, $b_3 = 72.066$~kg, and $b_4 = 172.028$~kg.
For Case~B, the optimal value of $J_1$ satisfies
$J_1^{(2)} = 1.0144$, and the corresponding biquadratic  positive-real impedance $Z(s)$ is as in  \eqref{eq: biquadratic form} with $a_2 = 1$, $a_1 = 226.559$, $a_0 = 1.34 \times 10^4$,
$d_2 = 5.083 \times 10^3$, $d_1 = 7.6  \times 10^4$, and $d_0 = 1.684  \times 10^7$. By using the Bott-Duffin synthesis procedure, $Z(s)$ is realizable as a nine-element series-parallel configuration as the configuration in Fig.~\ref{fig: Example 01 - c} with element values satisfying $c_m = 5.476 \times 10^{11}$~Ns/m, $c_1 = 1.257 \times 10^3$~Ns/m, $c_2 = 5.083  \times 10^3$~Ns/m, $k_1 = 5.664 \times 10^5$~N/m, $k_2 = 6.564 \times 10^4$~N/m,   $k_3 = 6.487 \times 10^5$~N/m, $b_1 = 11.28$~kg, $b_2 = 9.85$~kg, and $b_3 = 97.341$~kg.
\end{example}

\begin{figure}[thpb]
\centering
\subfigure[]{
\includegraphics[scale=0.9]{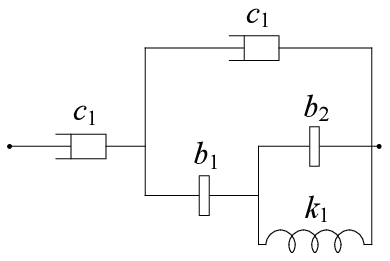}
\label{fig: Example 01 - a}}
\subfigure[]{
\includegraphics[scale=0.9]{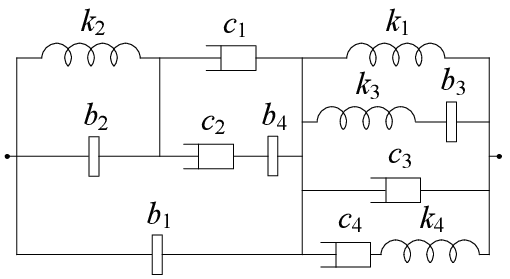}
\label{fig: Example 01 - b}}
\subfigure[]{
\includegraphics[scale=0.9]{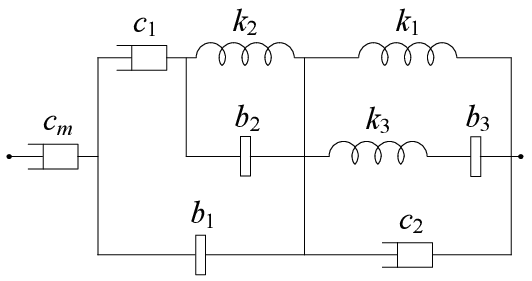}
\label{fig: Example 01 - c}}
\caption{(a) A five-element network realization of the optimal bicubic impedance in Example~1, where the configuration is as in  Fig.~1(b) and the element values satisfy $c_1 = 1.668 \times 10^3$~Ns/m, $c_2 = 6.96 \times 10^{-7}$~Ns/m, $b_1 = 172.097$~kg, $b_2 = 15.131$~kg, and $k_1 = 3.858 \times 10^4$~N/m.
(b) A  Bott-Duffin realization of the optimal bicubic impedance in Example~1, where the element values satisfy $c_1 = 6.956 \times 10^{-7}$~Ns/m, $c_2 = 1.667 \times 10^3$~Ns/m, $c_3 = 1.668 \times 10^3$~Ns/m, $c_4 = 3.999 \times 10^{12}$~Ns/m, $k_1 = 5.41 \times 10^8$~N/m, $k_2 = 3.859 \times 10^4$~N/m, $k_3 = 1.838 \times 10^5$~N/m, $k_4 = 1.617 \times 10^4$~N/m, $b_1 = 5.141 \times 10^{-3}$~kg, $b_2 = 15.13$~kg, $b_3 = 72.066$~kg, and $b_4 = 172.028$~kg.
(c)  A Bott-Duffin realization configuration of the optimal biquadratic positive-real impedance in Example~1, where
the element values satisfy $c_m = 5.476 \times 10^{11}$~Ns/m, $c_1 = 1.257 \times 10^3$~Ns/m, $c_2 = 5.083  \times 10^3$~Ns/m, $k_1 = 5.664 \times 10^5$~N/m, $k_2 = 6.564 \times 10^4$~N/m,   $k_3 = 6.487 \times 10^5$~N/m, $b_1 = 11.28$~kg, $b_2 = 9.85$~kg, and $b_3 = 97.341$~kg.}
\label{fig: Example 01}
\end{figure}

\begin{example}  \label{example: 02}
When $k_s = 70$~kN/m, the optimal value of $J_1$ for Case~A satisfies $J_1^{(1)} = 1.579$, and the corresponding bicubic impedance $Z(s)$ is as in (1) with
$a_3 = 279.553$, $a_2 = 4.239 \times 10^3$, $a_1 = 2.398 \times 10^4$, $a_0 = 2.232 \times 10^5$, $d_3 = 1$, $d_2 = 9.3105$, $d_1 = 141.471$, and $d_0 = 798.595$. Then, it can be checked that
the condition of Lemma~16 holds, where $T = 0.1078$, $y_1 = 30.221$, $y_2 = 29.937$, $z_1 = 1.772\times 10^{-4}$, and
$z_2 = 8.421 \times 10^{-4}$.
Therefore, $Z(s)$ is realizable as the configuration in Fig.~11 with
$c_1 =  3.577  \times 10^{-3}$~Ns/m, $k_1 = 0.03309$~N/m, $k_2 = 0.033403$~N/m, $b_1 = 1.772 \times 10^{-4}$~kg, and $b_2 = 8.421 \times 10^{-4}$~kg, which is shown in Fig.~\ref{fig: Example 02 - a}. By the Bott-Duffin synthesis procedure,  $Z(s)$  is   realizable as the configuration in Fig.~\ref{fig: Example 02 - b} with
$c_1 = 3.577 \times 10^{-3}$~Ns/m, $c_2 = 1.261 \times 10^{-5}$~Ns/m, $c_3 = 3.577 \times 10^{-3}$~Ns/m, $c_4 = 1.261 \times 10^{-5}$~Ns/m, $k_1 = 0.03367$~N/m, $k_2 = 0.02124$~N/m, $k_3 = 0.05203$~N/m, $k_4 = 1.168 \times 10^{-4}$~N/m, $b_1 = 3.813 \times 10^{-4}$~kg,
$b_2 = 2.468 \times 10^{-4}$~kg, $b_3 = 6.045 \times 10^{-4}$~kg, and $b_4 = 1.357 \times 10^{-6}$~kg.
For Case~B, the optimal value of $J_1$ satisfies
$J_1^{(2)} = 1.6498$, and the corresponding biquadratic  positive-real impedance $Z(s)$ is as in  \eqref{eq: biquadratic form} with $a_2 = 1$, $a_1 = 11.057$, $a_0 = 109.731$, $d_2 = 2.942 \times 10^3$, $d_1 = 1.798 \times 10^4$, and $d_0 = 1.496 \times 10^4$. By using the Bott-Duffin synthesis procedure, $Z(s)$ is realizable as a nine-element series-parallel configuration in
 Fig.~\ref{fig: Example 02 - c} with element values satisfying
$c_m = 3.537 \times 10^{12}$~Ns/m, $c_1 = 136.323$~Ns/m, $c_2 = 2.942 \times 10^3$~Ns/m, $k_1 = 1.921 \times 10^3$~N/m,
$k_2 =  379.207$~N/m,  $k_3 = 2.498 \times 10^4$~N/m, $b_1 = 208.8002$~kg, $b_2 = 16.054$~kg, and $b_3 = 1.058 \times 10^3$~kg.
\end{example}

\begin{figure}[thpb]
\centering
\subfigure[]{
\includegraphics[scale=0.9]{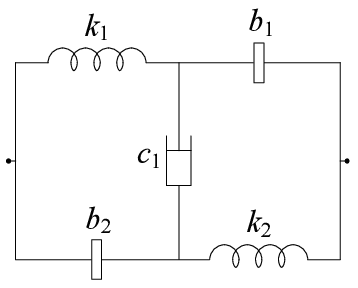}
\label{fig: Example 02 - a}}
\subfigure[]{
\includegraphics[scale=0.9]{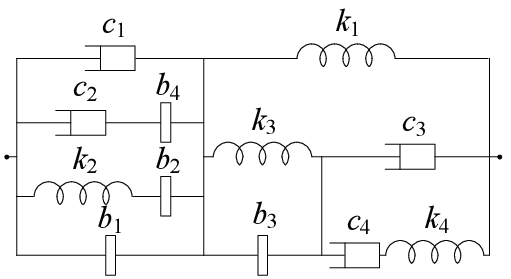}
\label{fig: Example 02 - b}}
\subfigure[]{
\includegraphics[scale=0.9]{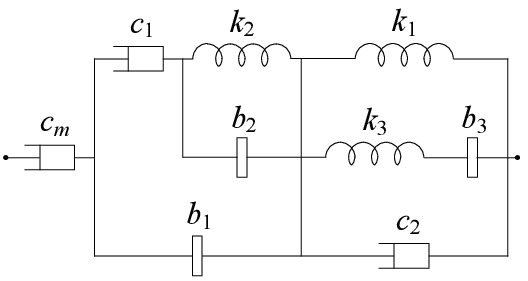}
\label{fig: Example 02 - c}}
\caption{(a) A five-element network realization of the optimal bicubic impedance in Example~2, where the configuration is as in  Fig.~11 and the element values satisfy  $c_1 =  3.577  \times 10^{-3}$~Ns/m, $k_1 = 0.03309$~N/m, $k_2 = 0.033403$~N/m, $b_1 = 1.772 \times 10^{-4}$~kg, and $b_2 = 8.421 \times 10^{-4}$~kg.
(b) A  Bott-Duffin realization of the optimal bicubic impedance in Example~2, where the element values satisfy  $c_1 = 3.577 \times 10^{-3}$~Ns/m, $c_2 = 1.261 \times 10^{-5}$~Ns/m, $c_3 = 3.577 \times 10^{-3}$~Ns/m, $c_4 = 1.261 \times 10^{-5}$~Ns/m, $k_1 = 0.03367$~N/m, $k_2 = 0.02124$~N/m, $k_3 = 0.05203$~N/m, $k_4 = 1.168 \times 10^{-4}$~N/m, $b_1 = 3.813 \times 10^{-4}$~kg,
$b_2 = 2.468 \times 10^{-4}$~kg, $b_3 = 6.045 \times 10^{-4}$~kg, and $b_4 = 1.357 \times 10^{-6}$~kg.
(c)  A Bott-Duffin realization configuration of the optimal biquadratic positive-real impedance in Example~2, where
the element values satisfy $c_m = 3.537 \times 10^{12}$~Ns/m, $c_1 = 136.323$~Ns/m, $c_2 = 2.942 \times 10^3$~Ns/m, $k_1 = 1.921 \times 10^3$~N/m,
$k_2 =  379.207$~N/m,  $k_3 = 2.498 \times 10^4$~N/m, $b_1 = 208.8002$~kg, $b_2 = 16.054$~kg, and $b_3 = 1.058 \times 10^3$~kg.
}
\label{fig: Example 02}
\end{figure}

As shown in Examples~\ref{example: 01} and \ref{example: 02}, the five-element realization results derived in this paper can provide much fewer elements than the Bott-Duffin synthesis procedure, provided that the bicubic impedance satisfies the corresponding realizability conditions.
Moreover, the five-element realizations can even contain fewer elements than the Bott-Duffin realizations of the optimal biquadratic impedance.
Since the realization results in this paper can be directly obtained by testing the realizability conditions and calculating the element expressions,
it is more convenient to obtain the realization networks compared with the Bott-Duffin synthesis procedure.


\section{Conclusion}

This paper has solved the realization problem of a bicubic impedance as a passive damper-spring-inerter network consisting of no more than five elements. The realization results of the specific bicubic impedance contains a pole or zero on $j \mathbb{R} \cup \infty$ with no more than five elements were first obtained. Then, a necessary and sufficient condition was derived for a bicubic impedance containing neither pole nor zero on  $j \mathbb{R} \cup \infty$ to be realizable as a five-element series-parallel network, by proving that 22 configurations classified into six quartets can cover this case and investigating their realizability conditions. Similarly, the synthesis results of five-element non-series-parallel networks were derived, where a necessary and sufficient for the realizability and
11 covering configurations classified into five quartets were obtained. Finally, some numerical examples
together with the optimization designs of positive-real controllers for suspension systems
were presented for illustrations. The results of this paper can theoretically contribute to investigating the minimal realizations of low-order impedances and can be directly utilized to design low-complexity electrical and mechanical networks, which are motivated by inerter-based vibration control.

\vspace{0.2cm}

\begin{appendices}

\section{Proof of Lemma~\ref{lemma: third-order configurations}}

\emph{Sufficiency.}
The sufficiency part is clearly satisfied.

\emph{Necessity.}
The necessity part can be proved by showing that the configurations in Figs.~\ref{fig: N1}--\ref{fig: N6} can  cover all the possible cases.

By Lemma \ref{lemma: lossless passive networks}, to avoid  lossless subnetworks,  $Z(s)$ is always realizable as the configuration belonging to one of the classes in Figs.~\ref{fig: R1-N2} and \ref{fig: R1L1-N2}.
Based on the principles of duality, frequency inversion, and frequency-inverse duality, it suffices to discuss  Figs.~\ref{fig: R1-N2-a} and \ref{fig: R1L1-N2-a}.

\begin{figure}[thpb]
      \centering
      \subfigure[]{
      \includegraphics[scale=1.0]{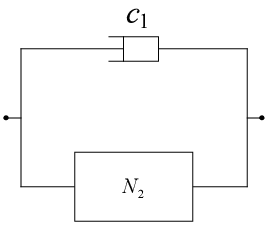}
      \label{fig: R1-N2-a}}
      \subfigure[]{
      \includegraphics[scale=1.0]{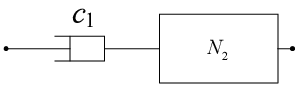}
      \label{fig: R1-N2-b}}
      \caption{Classes of five-element series-parallel configurations, where $N_2$ is a four-element series-parallel subnetwork consisting of one damper and three energy storage elements.}
      \label{fig: R1-N2}
\end{figure}

\begin{figure}[thpb]
      \centering
      \subfigure[]{
      \includegraphics[scale=1.0]{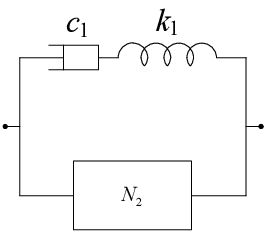}
      \label{fig: R1L1-N2-a}}
      \subfigure[]{
      \includegraphics[scale=1.0]{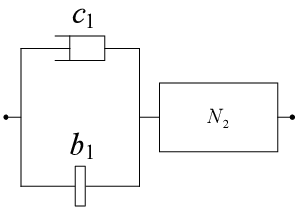}
      \label{fig: R1L1-N2-b}}
      \subfigure[]{
      \includegraphics[scale=1.0]{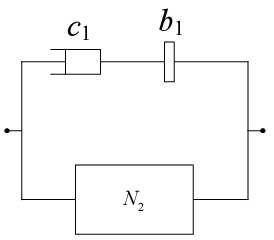}
      \label{fig: R1L1-N2-c}}
      \subfigure[]{
      \includegraphics[scale=1.0]{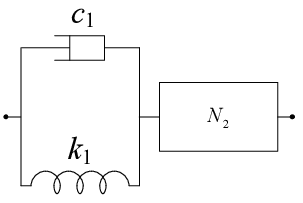}
      \label{fig: R1L1-N2-d}}
      \caption{Classes of five-element series-parallel configurations, where $N_2$ is a three-element series-parallel subnetwork consisting of one damper and two energy storage elements.}
      \label{fig: R1L1-N2}
\end{figure}

For  Fig.~\ref{fig: R1-N2-a}, $N_2$ must consist of one damper and three energy storage elements by Lemmas~\ref{lemma: lossless passive networks} and \ref{lemma: number of elements}. By the principle of frequency inversion,  assume that $N_2$ contains  at least two springs.
Recalling  that $Z(s)$ cannot be realized with fewer than five elements, by Lemma~\ref{lemma: lossless passive networks}, $N_2$ cannot be further decomposed as a parallel connection of two subnetworks. Therefore, the network graph of $N_2$ can only be
one of the graphs in Fig.~\ref{fig: four-element-graph}.
For the   graph  in Fig.~\ref{fig: four-element-graph-03}, to avoid $k$-$\mathcal{P}(a,a')$ and $b$-$\mathcal{P}(a,a')$, Edge~1 and one of Edges~2--4 must correspond to dampers by Lemma~\ref{lemma: graph constraint}.
Since $N_2$ is in parallel with a damper $c_1$,  by the equivalence in Fig.~\ref{fig: Network_Equivalence}, $Z(s)$ is realizable as a four-element series-parallel network, which contradicts the assumption.
For the graph in Fig.~\ref{fig: four-element-graph-04}, recalling that the number of springs is at least two,
one of Edges~1 and 2 and one of Edges~3 and 4 must correspond to springs.
This means that there exists $k$-$\mathcal{P}(a,a')$, which contradicts the assumption by Lemma~\ref{lemma: graph constraint}.
For the graph in Fig.~\ref{fig: four-element-graph-01}, one of Edges~1 and 2 and one of Edges~3 and 4 must correspond to springs, which based on the equivalence in
Fig.~\ref{fig: Network_Equivalence}
implies that the subnetwork $N_2$ can always be equivalent to the subnetwork whose network graph is in Fig.~\ref{fig: four-element-graph-02}. Therefore, one only needs to discuss the graph in  Fig.~\ref{fig: four-element-graph-02}, which can imply all the possible configurations in Figs.~\ref{fig: N1a}, \ref{fig: N2a}, \ref{fig: N3a}, and \ref{fig: N4a} by Lemma~\ref{lemma: graph constraint} and the approach of enumeration.

\begin{figure}[thpb]
      \centering
      \subfigure[]{
      \includegraphics[scale=1.0]{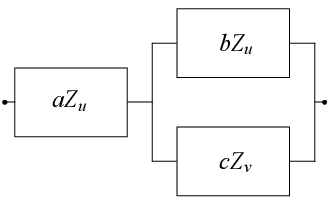}
      \label{fig: Equivalent-a}}
      \subfigure[]{
      \includegraphics[scale=1.0]{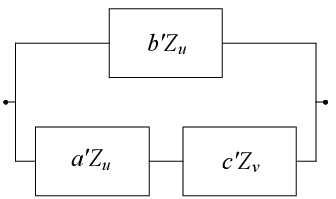}
      \label{fig: Equivalent-b}}
      \caption{Two networks that are equivalent with each other, where $a' = a(a+b)/b$, $b' = a + b$,   $c' = c(a+b)^2/b^2$, and $Z_u$ and $Z_v$ are positive-real impedances (see \cite{Lin65}).}
      \label{fig: Network_Equivalence}
\end{figure}

\begin{figure}[thpb]
      \centering
      \subfigure[]{
      \includegraphics[scale=1.0]{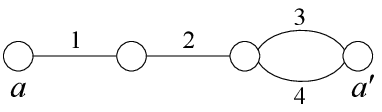}
      \label{fig: four-element-graph-01}}
      \subfigure[]{
      \includegraphics[scale=1.0]{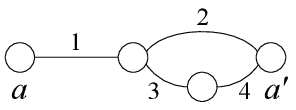}
      \label{fig: four-element-graph-02}}
      \subfigure[]{
      \includegraphics[scale=1.0]{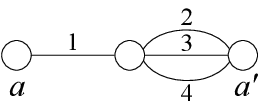}
      \label{fig: four-element-graph-03}}
      \subfigure[]{
      \includegraphics[scale=1.0]{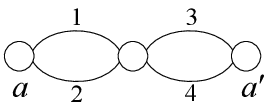}
      \label{fig: four-element-graph-04}}
      \caption{Network graphs of four-element series-parallel subnetworks $N_2$ in Fig.~\ref{fig: R1-N2}, where $a$ and $a'$ are vertices corresponding to two terminals.}
      \label{fig: four-element-graph}
\end{figure}

For   Fig.~\ref{fig: R1L1-N2-a}, $N_2$ must consist of one damper and
two energy storage elements by Lemmas~\ref{lemma: lossless passive networks} and \ref{lemma: number of elements}. By the principle of frequency inversion,  assume that $N_2$ contains  at least one spring. To avoid repeated discussion, the network graph of $N_2$ can   be one of the graphs in Fig.~\ref{fig: three-element-graph}. For the  graph in Fig.~\ref{fig: three-element-graph-01},
the network graph of any configuration for Fig.~\ref{fig: R1L1-N2-a} realizing $Z(s)$ must contain $k$-$\mathcal{C}(a,a')$, which contradicts the assumption by Lemma~\ref{lemma: graph constraint}. For
Fig.~\ref{fig: three-element-graph-02}, all the possible configurations are implied  in Figs.~\ref{fig: N5a} and \ref{fig: N6a} by Lemma~\ref{lemma: graph constraint} and the approach of enumeration.

\begin{figure}[thpb]
      \centering
      \subfigure[]{
      \includegraphics[scale=1.0]{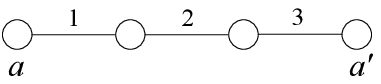}
      \label{fig: three-element-graph-01}}
      \subfigure[]{
      \includegraphics[scale=1.0]{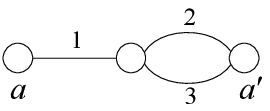}
      \label{fig: three-element-graph-02}}
      \caption{Network graphs of three-element series-parallel subnetworks $N_2$ in Fig.~\ref{fig: R1L1-N2}, where $a$ and $a'$ are vertices corresponding to two terminals.}
      \label{fig: three-element-graph}
\end{figure}

Together with the principles of duality, frequency inversion, and frequency-inverse duality, one can obtain  the  configurations in Figs.~\ref{fig: N1}--\ref{fig: N6} covering all the cases, where the configuration in Fig.~\ref{fig: N1a} (resp. Fig.~\ref{fig: N1b}) whose one-terminal-pair labeled graph is $\mathcal{N}_{1a}$ (resp. $\text{Dual}(\mathcal{N}_{1a})$) can  always be equivalent to the configuration   whose one-terminal-pair labeled graph is $\text{GDu}(\mathcal{N}_{1a})$ (resp. $\text{Inv}(\mathcal{N}_{1a})$) by the equivalence in Fig.~\ref{fig: Network_Equivalence}.

\section{Proof of Lemma~\ref{lemma: realizability condition of N1a}} \label{appendix: N1a}

\setcounter{equation}{0}
\renewcommand{\theequation}{\thesection.\arabic{equation}}

By the principle  of duality, one only needs to prove that  the impedance $Z(s)$ of this lemma is realizable as  in Fig.~\ref{fig: N1a}, if and only if $\mathcal{B}_{13} > 0$, $\mathcal{B}_{12} = 0$, $\mathcal{B}_{23} = 0$,  and $\Delta_1 > 0$.

\emph{Necessity.}
The impedance   of the configuration in Fig.~\ref{fig: N1a} is calculated as
$Z(s) = a(s)/d(s)$, where
$a(s) = c_1^{-1} k_1^{-1} k_2^{-1} b_1  s^3 + c_1^{-1} c_2^{-1} (k_1^{-1} + k_2^{-1}) b_1 s^2 + c_1^{-1} k_1^{-1} s + c_1^{-1} c_2^{-1}$ and
$d(s) = k_1^{-1} k_2^{-1} b_1 s^3 + (c_1^{-1} + c_2^{-1})(k_1^{-1} + k_2^{-1}) b_1 s^2 + k_1^{-1} s + c_1^{-1} + c_2^{-1}$.
Since $Z(s)$ is realizable as the configuration in Fig.~\ref{fig: N1a}, it follows   that
\begin{subequations}
\begin{align}
c_1^{-1} k_1^{-1} k_2^{-1} b_1  &=  x a_3,   \label{eq: N1a eqn1}    \\
c_1^{-1} c_2^{-1} (k_1^{-1} + k_2^{-1}) b_1  &=  x a_2,
 \label{eq: N1a eqn2}    \\
c_1^{-1} k_1^{-1}  &=  x a_1,   \label{eq: N1a eqn3}    \\
c_1^{-1} c_2^{-1}       &=  x a_0,   \label{eq: N1a eqn4}  \\
k_1^{-1} k_2^{-1} b_1   &=  x d_3,   \label{eq: N1a eqn5}    \\
(c_1^{-1} + c_2^{-1})(k_1^{-1} + k_2^{-1}) b_1 &= x d_2,  \label{eq: N1a eqn6}    \\
k_1^{-1}     &=  x d_1,   \label{eq: N1a eqn7}    \\
c_1^{-1} + c_2^{-1}     &=  x d_0,   \label{eq: N1a eqn8}
\end{align}
\end{subequations}
where $x > 0$.
It follows from \eqref{eq: N1a eqn1} and \eqref{eq: N1a eqn5} that the value of $c_1$ can be expressed as in \eqref{eq: N1a element values}. Together with \eqref{eq: N1a eqn3} and \eqref{eq: N1a eqn7}, it is implies that $\mathcal{B}_{23} := a_3 d_1 - a_1 d_3  = 0$. Then, substituting the expression of $c_1$ into  \eqref{eq: N1a eqn4} and \eqref{eq: N1a eqn8} yields
\begin{equation}   \label{eq: N1a x}
x = \frac{a_3^2}{d_3 \mathcal{B}_{13}},
\end{equation}
and the expression of $c_2$ as in \eqref{eq: N1a element values}, which by $c_2 > 0$
implies that $\mathcal{B}_{13} := a_3 d_0 - a_0 d_3 > 0$.
From \eqref{eq: N1a eqn5} and \eqref{eq: N1a eqn7}, one derives that
\begin{equation}  \label{eq: N1a L2C1}
k_2^{-1} b_1  = \frac{d_3}{d_1},
\end{equation}
From \eqref{eq: N1a eqn7} and \eqref{eq: N1a x}, the expression of $k_1$ can be derived as in
\eqref{eq: N1a element values}. By \eqref{eq: N1a eqn2}, \eqref{eq: N1a eqn4}, \eqref{eq: N1a eqn6}, and \eqref{eq: N1a eqn8}, one obtains
$\mathcal{B}_{12} := a_2 d_0 - a_0 d_2 = 0$ and
$(k_1^{-1} + k_2^{-1})b_1  = a_2/a_0$, which together with \eqref{eq: N1a L2C1} and the expression of $k_1$ implies that
\begin{equation} \label{eq: N1a L2 C1 equivalent}
k_2 = \frac{\mathcal{B}_{13}(a_2d_1 - a_0d_3)}{a_0a_3^2d_1}, ~~
b_1  = \frac{d_3 \mathcal{B}_{13} (a_2d_1 - a_0d_3)}{a_0 a_3^2 d_1^2}.
\end{equation}
It follows from $\mathcal{B}_{23} = 0$ that  $a_3 (a_2 d_1 - a_0 d_3) =   d_3 \Delta_1$, which together with \eqref{eq: N1a L2 C1 equivalent} implies that the expressions of $k_2$ and $b_1$ can be further obtained as in \eqref{eq: N1a element values}. Since $\mathcal{B}_{13} > 0$ and $k_2 > 0$, it is implied that
$\Delta_1 := a_1a_2 - a_0a_3  > 0$. Therefore, the necessity part is proved.

\emph{Sufficiency.}
Suppose that $\mathcal{B}_{13} > 0$, $\mathcal{B}_{12} = 0$, $\mathcal{B}_{23} = 0$,  and $\Delta_1 > 0$.
Let the values of the elements satisfy \eqref{eq: N1a element values} and $x$ satisfy \eqref{eq: N1a x}. Then,  $\Delta_1 > 0$ and $\mathcal{B}_{13} > 0$ can guarantee that the element values are positive and finite. Since $\mathcal{B}_{12} = 0$ and $\mathcal{B}_{23} = 0$, it can be verified that conditions~\eqref{eq: N1a eqn1}--\eqref{eq: N1a eqn8} hold. Therefore, $Z(s)$ is realizable as the configuration in Fig.~\ref{fig: N1a}.

\section{Proof of Lemma~\ref{lemma: realizability condition of N6a}}
\label{appendix: N6a}
\setcounter{equation}{0}
\renewcommand{\theequation}{\thesection.\arabic{equation}}

By the principles  of duality, frequency inversion, and frequency-inverse duality,
one only needs to prove that  the impedance $Z(s)$ of this lemma is realizable as the configuration in Fig.~\ref{fig: N6a}, if and only if
 Condition~1 of this lemma holds, where $\zeta_1$ is defined in \eqref{eq: zeta}.

\emph{Necessity.}
The impedance   of the configuration in Fig.~\ref{fig: N6a} is calculated as
$Z(s) = a(s)/d(s)$, where
$a(s) = c_2^{-1} k_1^{-1} k_2^{-1} b_1 s^3 + (c_1^{-1}c_2^{-1}b_1 + k_1^{-1}) k_2^{-1} s^2 + (c_1^{-1}k_2^{-1} + c_2^{-1}k_1^{-1}) s + c_1^{-1}c_2^{-1}$ and $d(s) = k_1^{-1}k_2^{-1}b_1 s^3 + (c_1^{-1}k_2^{-1} + c_2^{-1}k_1^{-1} + c_2^{-1}k_2^{-1}) s^2 + (c_1^{-1}c_2^{-1}b_1 + k_2^{-1}) s + c_2^{-1}$.
Since $Z(s)$ is realizable as the configuration in Fig.~\ref{fig: N6a}, it follows that
\begin{subequations}
\begin{align}
c_2^{-1} k_1^{-1} k_2^{-1} b_1  &=  x a_3,   \label{eq: N6a eqn1}    \\
(c_1^{-1}c_2^{-1}b_1 + k_1^{-1}) k_2^{-1}  &=  x a_2, \label{eq: N6a eqn2}    \\
c_1^{-1}k_2^{-1} + c_2^{-1}k_1^{-1}  &=  x a_1,   \label{eq: N6a eqn3}    \\
c_1^{-1}c_2^{-1}  &=  x a_0,   \label{eq: N6a eqn4}   \\
k_1^{-1}k_2^{-1}b_1  &=   x d_3, \label{eq: N6a eqn5}    \\
c_1^{-1}k_2^{-1} + c_2^{-1}k_1^{-1} + c_2^{-1}k_2^{-1}  &=  x d_2,  \label{eq: N6a eqn6}    \\
c_1^{-1}c_2^{-1}b_1 + k_2^{-1}  &=  x d_1,  \label{eq: N6a eqn7}    \\
c_2^{-1}  &=  x d_0,   \label{eq: N6a eqn8}
\end{align}
\end{subequations}
where $x > 0$. Then, it follows from \eqref{eq: N6a eqn1} and \eqref{eq: N6a eqn5} that
the expression of $c_2$ can be obtained as in \eqref{eq: N6a element values}, which together with \eqref{eq: N6a eqn4} and \eqref{eq: N6a eqn8} implies
\begin{equation} \label{eq: N6a x}
x = \frac{a_3}{d_0d_3},
\end{equation}
and the expression of $c_1$ as in \eqref{eq: N6a element values}. It is implied from \eqref{eq: N6a eqn3} and \eqref{eq: N6a eqn7} that
\begin{equation} \label{eq: N6a L1 C1}
k_1 = \frac{a_3d_0}{a_1a_3 - a_0d_3 k_2^{-1}}, ~~~
b_1 = \frac{a_3d_1 - d_0d_3 k_2^{-1}}{a_0a_3}.
\end{equation}
Substituting \eqref{eq: N6a x}, \eqref{eq: N6a L1 C1}, and the expressions of $c_1$ and $c_2$ into \eqref{eq: N6a eqn6} yields $d_0^2 d_3 k_2^{-2} - d_0 \mathcal{B}_{23} k_2^{-1} - a_3 (a_1d_1-a_0d_2) = 0$, which implies that $\mathcal{M}_{23}^2 - 4a_0a_3d_2d_3 \geq 0$ and
$k_2 = a_0 d_0 d_3 / (a_3 \zeta_1)$ as in \eqref{eq: N6a element values}, where $\zeta_1$ is defined in \eqref{eq: zeta}.
Furthermore, substituting $k_2 = a_0 d_0 d_3 / (a_3 \zeta_1)$ into \eqref{eq: N6a L1 C1} implies that the element values of $k_1$ and $b_1$ can be expressed as in \eqref{eq: N6a element values}.
By the assumption that $k_1 > 0$, $k_2 > 0$, and $b_1 > 0$, it is implied that $0 < \zeta_1 < \min\{a_1d_0, a_0d_1\}$.
Substituting  the element values in \eqref{eq: N6a element values} and $x$ in
\eqref{eq: N6a x}
into \eqref{eq: N6a eqn2} and \eqref{eq: N6a eqn5} can imply
$a_3 \zeta_1^2 - a_0 \mathcal{B}_{23} \zeta_1 - a_0^2d_3(a_1d_1-a_0d_2) = 0$ and
$\zeta_1^3 - \mathcal{M}_{11} \zeta_1^2 + a_0a_1d_0d_1 \zeta_1 - a_0^3 d_0^2 d_3 = 0$. Therefore, the necessity part is proved.

\emph{Sufficiency.} Suppose that Condition~1 of this lemma holds. Let the values of the elements satisfy
\eqref{eq: N6a element values} and $x$ satisfy \eqref{eq: N6a x}. Then, $0 < \zeta_1 < \min\{ a_1d_0, a_0d_1 \}$ and $\mathcal{M}_{23}^2 - 4a_0a_3d_2d_3 \geq 0$
can guarantee that  the element values are positive and finite. Since $a_3 \zeta_1^2 - a_0 \mathcal{B}_{23} \zeta_1 - a_0^2d_3(a_1d_1-a_0d_2) = 0$  and
$\zeta_1^3 - \mathcal{M}_{11} \zeta_1^2 + a_0a_1d_0d_1 \zeta_1 - a_0^3 d_0^2 d_3 = 0$, it can be verified that
conditions~\eqref{eq: N6a eqn1}--\eqref{eq: N6a eqn8} hold.
Therefore, $Z(s)$ is realizable as the configuration in Fig.~\ref{fig: N6a}.

\section{Proof of Lemma~\ref{lemma: third-order non-series-parallel configurations}}
\label{appendix: non-series-parallel configurations}

\emph{Sufficiency.} The sufficiency part is clearly satisfied.

\emph{Necessity.}
Since there is no  pole or zero on $j \mathbb{R} \cup  \infty$, there are at most four energy storage elements.
Together with Lemma~\ref{lemma: number of elements}, the number of energy storage elements is either three or four.  By Lemma~\ref{lemma: graph constraint} and the approach of enumeration, all the possible configurations are  as in Figs.~\ref{fig: N7}--\ref{fig: N11} and \ref{fig: N12}.
It remains to discussing the realizability of $Z(s)$ in this lemma  as the  five-element configurations containing four energy storage elements
in Figs.~\ref{fig: N11} and \ref{fig: N12}.

\begin{figure}[thpb]
      \centering
      \subfigure[]{
      \includegraphics[scale=0.9]{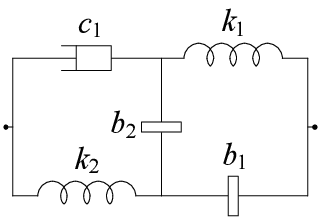}
      \label{fig: N12a}}
      \subfigure[]{
      \includegraphics[scale=0.9]{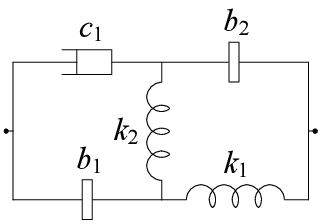}
      \label{fig: N12b}}
      \caption{Five-element non-series-parallel configurations that cannot realize the bicubic impedance $Z(s)$ in \eqref{eq: three-degree impedance}, whose one-terminal-pair labeled graphs are (a) $\mathcal{N}_{12a}$ and (b) $\mathcal{N}_{12b}$, respectively,  satisfying $\mathcal{N}_{12b} = \text{Dual}(\mathcal{N}_{12a})$, where $\text{Inv}(\mathcal{N}_{12a}) = \text{Dual}(\mathcal{N}_{12a})$
      and $\text{GDu}(\mathcal{N}_{12a}) = \mathcal{N}_{12a}$.}
      \label{fig: N12}
\end{figure}

The impedance of the five-element configuration containing four energy storage elements in Fig.~\ref{fig: N11} is calculated as $Z(s) = a(s)/d(s)$, where $a(s) = c_1^{-1} k_1^{-1} k_2^{-1} b_1 b_2 s^4 + k_1^{-1}k_2^{-1} (b_1 + b_2) s^3 + c_1^{-1} (k_1^{-1}b_1 + k_2^{-1}b_2) s^2 + (k_1^{-1} + k_2^{-1}) s + c_1^{-1}$ and $d(s) = k_1^{-1}k_2^{-1}b_1b_2 s^4 + c_1^{-1} (k_1^{-1} + k_2^{-1}) b_1 b_2 s^3 + (k_1^{-1} b_2 + k_2^{-1} b_1) s^2 + c_1^{-1}(b_1 + b_2) s + 1$.
Choosing $k_1 = 1/16$~$\text{N}/\text{m}$, $k_2 = 1$~$\text{N}/\text{m}$, $b_1 = 1$~$\text{kg}$, $b_2 = 1$~$\text{kg}$, and
$c_1 = 1/2$~$\text{Ns}/\text{m}$,   the impedance of  Fig.~\ref{fig: N11} is $Z(s) = (16 s^3+8 s^2+13 s+2)/(8 s^3+13 s^2+2 s+1)$, which  satisfies the assumption.
Therefore,
the configuration in Fig.~\ref{fig: N11} can    realize the bicubic impedance in \eqref{eq: three-degree impedance} for some element values.

The impedance of the five-element configuration containing four energy storage elements in Fig.~\ref{fig: N12a} is calculated as $Z(s) = a(s)/d(s)$, where $a(s) = c_1^{-1}k_1^{-1}k_2^{-1}b_1b_2 s^4 + k_1^{-1}k_2^{-1}(b_1 + b_2) s^3 + c_1^{-1} (k_1^{-1} b_2 + k_2^{-1} b_1 + k_2^{-1} b_2) s^2 + k_1^{-1} s + c_1^{-1}$ and $d(s) = k_1^{-1}k_2^{-1}b_1b_2 s^4 + c_1^{-1} k_1^{-1} b_1b_2 s^3 + (k_1^{-1}b_1 + k_2^{-1}b_1 + k_2^{-1}b_2) s^2 + c_1^{-1} (b_1 + b_2) s + 1$.
Assume that  a bicubic impedance of this lemma   is realizable as the configuration in Fig.~\ref{fig: N12a}.
Then, the \emph{resultant} \cite[Chapter XV]{Gan80} of $a(s)$ and $d(s)$ in $s$   calculated as
$R_0(a, b, s) = k_1^{-3} k_2^{-1} b_1 b_2 h_2^2$ must be zero, where
$h_2 := k_1^{-1} b_1 b_2^3 c_1^{-4} + k_2^{-1} (b_1 + b_2) (k_2^{-1}(b_1 + b_2)^2 - 3k_1^{-1}b_1b_2) c_1^{-2} + k_1^{-2}k_2^{-1}b_1^2$.
If $k_2^{-1} (b_1 + b_2)^2 - 3 k_1^{-1}b_1b_2 \geq 0$, then $h_2 > 0$, which implies that $R_0(a, b, s) > 0$.
This contradicts the assumption.
If $k_2^{-1} (b_1 + b_2)^2 - 3k_1^{-1}b_1b_2 < 0$, then  the discriminant of the equation $h_2 = 0$ in $c_1^{-2}$ is calculated as
$(k_2^{-1}(b_1+b_2)(k_2^{-1}(b_1+b_2)^2 -
3k_1^{-1} b_1 b_2))^2 - 4 k_1^{-1} b_1 b_2^3 k_1^{-2} k_2^{-1} b_1^2 = k_2^{-1} (k_2^{-1} (b_1+b_2)^2 -
4 k_1^{-1} b_1 b_2)(k_2^{-1} (b_1+b_2)^2 - k_1^{-1} b_1 b_2)^2 \leq 0$.
Since $h_2 = 0$ must have a positive root in $c_2^{-2}$, it is implied that
$k_2^{-1}(b_1 + b_2)^2 - k_1^{-1} b_1 b_2 = 0$,  which together with $h_2 = 0$ implies
$c_1^2 = b_2^{-2}/(k_2^{-1}(b_1 + b_2))$. Therefore,  the impedance becomes
$Z(s) = c_1^{-1}(c_1^{-2}b_1b_2^2 s^2 + c_1^{-1}b_2^2 s + b_1)/(b_1(c_1^{-2}b_2^2 s^2 + c_1^{-1}b_1 s + 1))$, whose McMillan degree is at most two. By contradiction, any bicubic impedance of this lemma cannot be realized as the configuration in Fig.~\ref{fig: N12a}, which cannot be realized as the configuration in Fig.~\ref{fig: N12b} by the principle of duality.

\section{Proof of Lemma~\ref{lemma: realizability condition of N8a}}
\label{appendix: N8a}
\setcounter{equation}{0}
\renewcommand{\theequation}{\thesection.\arabic{equation}}

By  the principle  of duality,
one only needs to prove that  the impedance $Z(s)$ of this lemma is realizable as the configuration in Fig.~\ref{fig: N8a}, if and only if Condition~1 or Condition~2 of this lemma holds, where $\Lambda_{1a}$ and $\Lambda_{1b}$  are defined in \eqref{eq: Lambda1a} and \eqref{eq: Lambda1b}, respectively.

\emph{Necessity.}
The impedance   of the configuration in Fig.~\ref{fig: N8a} is calculated as $Z(s) = a(s)/d(s)$, where
$a(s) = c_2^{-1} k_1^{-1} k_2^{-1} b_1 s^3 + (c_1^{-1}c_2^{-1}(k_1^{-1} + k_2^{-1})b_1 + k_1^{-1}k_2^{-1}) s^2 + (c_1^{-1}(k_1^{-1}+k_2^{-1}) + c_2^{-1}k_1^{-1}) s + c_1^{-1}c_2^{-1}$  and
$d(s) = k_1^{-1}k_2^{-1}b_1 s^3 + (c_1^{-1}(k_1^{-1}+k_2^{-1})+c_2^{-1}k_2^{-1})b_1 s^2 + (c_1^{-1}c_2^{-1}b_1 + k_1^{-1} + k_2^{-1}) s + c_2^{-1}$.
Then,
\begin{subequations}
\begin{align}
c_2^{-1} k_1^{-1} k_2^{-1} b_1   &=  x a_3,   \label{eq: N8a eqn1}    \\
c_1^{-1}c_2^{-1}(k_1^{-1} + k_2^{-1})b_1 + k_1^{-1}k_2^{-1}  &=  x a_2, \label{eq: N8a eqn2}    \\
c_1^{-1}(k_1^{-1}+k_2^{-1}) + c_2^{-1}k_1^{-1}   &=  x a_1,   \label{eq: N8a eqn3}    \\
c_1^{-1}c_2^{-1}    &=  x a_0,   \label{eq: N8a eqn4}  \\
k_1^{-1}k_2^{-1}b_1     &=   x d_3, \label{eq: N8a eqn5}    \\
(c_1^{-1}(k_1^{-1}+k_2^{-1})+c_2^{-1}k_2^{-1})b_1    &=  x d_2,  \label{eq: N8a eqn6}   \\
c_1^{-1}c_2^{-1}b_1 + k_1^{-1} + k_2^{-1}  &=  x d_1,  \label{eq: N8a eqn7}    \\
c_2^{-1}  &=  x d_0,   \label{eq: N8a eqn8}
\end{align}
\end{subequations}
where $x > 0$. It follows from \eqref{eq: N8a eqn1} and \eqref{eq: N8a eqn5} that the expression of $c_2$ can be obtained as in  \eqref{eq: N8a element values}, together with \eqref{eq: N8a eqn4} and \eqref{eq: N8a eqn8} yields the expression of $c_1$ as in \eqref{eq: N8a element values} and
\begin{equation} \label{eq: N8a x}
x = \frac{a_3}{d_0d_3}.
\end{equation}
Substituting \eqref{eq: N8a x} and the expressions of $c_1$ and $c_2$ into \eqref{eq: N8a eqn7} yields
\begin{equation} \label{eq: N8a L1plusL2}
k_1^{-1} + k_2^{-1} = \frac{a_3(d_1 - a_0 b_1)}{d_0d_3},
\end{equation}
which together with \eqref{eq: N8a eqn3} and \eqref{eq: N8a eqn5} implies the expressions of $k_1$ and $k_2$ as in \eqref{eq: N8a element values}. Substituting \eqref{eq: N8a x},   and the expressions of $c_2$, $k_1$ and $k_2$ into \eqref{eq: N8a eqn2} implies
\begin{equation} \label{eq: N8a equations}
c_1^{-1}(k_1^{-1} + k_2^{-1})b_1 = \frac{a_2 b_1 - d_3}{d_0 b_1}.
\end{equation}
Substituting \eqref{eq: N8a x}, \eqref{eq: N8a equations}, and the expressions of $c_2$ and $k_2$ into \eqref{eq: N8a eqn6} implies $a_0^2 \mathcal{B}_{33} b_1^2 - (\mathcal{B}_{13} \mathcal{M}_{13} - \mathcal{B}_{11} \mathcal{B}_{33}) b_1 + d_3^2 \mathcal{B}_{11} = 0$.
If $\mathcal{B}_{33} = 0$, then $b_1$  must satisfy $b_1 =  \Lambda_{1a}$, where $\Lambda_{1a}$ is defined in \eqref{eq: Lambda1a}. Then, $b_1 > 0$ implies that $\mathcal{B}_{11} \mathcal{B}_{13} > 0$.
Together with the element values in \eqref{eq: N8a element values} and $x$ in \eqref{eq: N8a x}, it follows from \eqref{eq: N8a eqn2} and \eqref{eq: N8a L1plusL2} that  $a_0^2 a_3 \Lambda_{1a}^3 - a_0a_3d_1 \Lambda_{1a}^2 + a_2d_0d_3 \Lambda_{1a} - d_0d_3^2 = 0$  and
  $a_0^3 \mathcal{M}_{13} \Lambda_{1a}^3 + a_0(2 \mathcal{B}_{11} \mathcal{M}_{13} + a_1a_3d_0^2) \Lambda_{1a}^2 + \mathcal{B}_{11} (d_3 \mathcal{B}_{11} - a_3d_0d_1) \Lambda_{1a} + a_3d_0^3d_3 = 0$.
If $\mathcal{B}_{33} \neq 0$, then $(\mathcal{B}_{13}^2 - \mathcal{B}_{11} \mathcal{B}_{33})(\mathcal{M}_{13}^2 - \mathcal{B}_{11} \mathcal{B}_{33}) \geq 0$, and $b_1$ can be solved as $b_1 = \Lambda_{1b}$, where $\Lambda_{1b}$ is defined in \eqref{eq: Lambda1b},
which by $k_1 > 0$ and $k_2 > 0$ implies that  $a_0^2 \Lambda_{1b} > \max\{0, -\mathcal{B}_{11} \}$. Similarly, together with the element values in \eqref{eq: N8a element values} and $x$ in \eqref{eq: N8a x}, it follows from \eqref{eq: N8a eqn2} and \eqref{eq: N8a L1plusL2} that  $a_0^2 a_3 \Lambda_{1b}^3 - a_0a_3d_1 \Lambda_{1b}^2 + a_2d_0d_3 \Lambda_{1b} - d_0d_3^2 = 0$  and
  $a_0^3 \mathcal{M}_{13} \Lambda_{1b}^3 + a_0(2 \mathcal{B}_{11} \mathcal{M}_{13} + a_1a_3d_0^2) \Lambda_{1b}^2 + \mathcal{B}_{11} (d_3 \mathcal{B}_{11} - a_3d_0d_1) \Lambda_{1b} + a_3d_0^3d_3 = 0$.

\emph{Sufficiency.} Suppose that Condition~1 of this lemma holds. Let $c_1$, $c_2$, $k_1$, and $k_2$ satisfy
\eqref{eq: N8a element values},  $b_1 = \Lambda_{1a}$, and $x$ satisfy \eqref{eq: N8a x}. Then, $\mathcal{B}_{11} \mathcal{B}_{13} > 0$ implies that $\Lambda_{1a} = d_3^3 \mathcal{B}_{11}/(\mathcal{B}_{13} \mathcal{M}_{13}) > 0$ and $a_0^2 \Lambda_{1a} + \mathcal{B}_{11} = a_3^2d_0^2 \mathcal{B}_{11}/(\mathcal{B}_{13} \mathcal{M}_{13}) > 0$, which
can guarantee that  the element values are positive and finite.
Since $\mathcal{B}_{33} = 0$, $a_0^2 a_3 \Lambda_{1a}^3 - a_0a_3d_1 \Lambda_{1a}^2 + a_2d_0d_3 \Lambda_{1a} - d_0d_3^2 = 0$, and
  $a_0^3 \mathcal{M}_{13} \Lambda_{1a}^3 + a_0(2 \mathcal{B}_{11} \mathcal{M}_{13} + a_1a_3d_0^2) \Lambda_{1a}^2 + \mathcal{B}_{11} (d_3 \mathcal{B}_{11} - a_3d_0d_1) \Lambda_{1a} + a_3d_0^3d_3 = 0$, it can be verified that conditions~\eqref{eq: N8a eqn1}--\eqref{eq: N8a eqn8} hold.
Therefore, $Z(s)$ is realizable as the configuration in Fig.~\ref{fig: N8a}.
Similar discussions can be made    when  Condition~2 of this lemma holds.

\section{Proof of Lemma~\ref{lemma: realizability condition of N9a}}
\label{appendix: N9a}
\setcounter{equation}{0}
\renewcommand{\theequation}{\thesection.\arabic{equation}}

By  the principle  of duality,
one only needs to prove that  the impedance $Z(s)$ of this lemma is realizable as the configuration in Fig.~\ref{fig: N9a}, if and only if
Condition~1 of this lemma holds, where $\Gamma_1$ and $\Phi_1$ are defined in \eqref{eq: Gamma1} and \eqref{eq: Phi1}, respectively.

\emph{Necessity.}
The impedance   of the configuration in Fig.~\ref{fig: N9a} is calculated as $Z(s) = a(s)/d(s)$, where
$a(s) = (c_1^{-1} + c_2^{-1})k_1^{-1}k_2^{-1}b_1 s^3 + (c_1^{-1}c_2^{-1}(k_1^{-1}+k_2^{-1})b_1 + k_1^{-1}k_2^{-1}) s^2 + (c_1^{-1}k_2^{-1} + c_2^{-1}k_1^{-1}) s + c_1^{-1} c_2^{-1}$ and $d(s) = k_1^{-1}k_2^{-1}b_1 s^3 + (c_1^{-1}k_1^{-1} + c_2^{-1} k_2^{-1}) b_1 s^2 + (c_1^{-1}c_2^{-1}b_1 + k_1^{-1} + k_2^{-1}) s + c_1^{-1} + c_2^{-1}$.
Then,
\begin{subequations}
\begin{align}
(c_1^{-1} + c_2^{-1})k_1^{-1}k_2^{-1}b_1    &=  x a_3,   \label{eq: N9a eqn1}    \\
c_1^{-1}c_2^{-1}(k_1^{-1}+k_2^{-1})b_1 + k_1^{-1}k_2^{-1}  &=  x a_2, \label{eq: N9a eqn2}    \\
c_1^{-1}k_2^{-1} + c_2^{-1}k_1^{-1}  &=  x a_1,   \label{eq: N9a eqn3}    \\
c_1^{-1} c_2^{-1}  &=  x a_0,   \label{eq: N9a eqn4}   \\
k_1^{-1}k_2^{-1}b_1  &=   x d_3, \label{eq: N9a eqn5}    \\
(c_1^{-1}k_1^{-1} + c_2^{-1} k_2^{-1}) b_1  &=  x d_2,  \label{eq: N9a eqn6}    \\
c_1^{-1}c_2^{-1}b_1 + k_1^{-1} + k_2^{-1}  &=  x d_1,  \label{eq: N9a eqn7}    \\
c_1^{-1} + c_2^{-1}  &=  x d_0,   \label{eq: N9a eqn8}
\end{align}
\end{subequations}
where $x > 0$. It follows from \eqref{eq: N9a eqn1} and \eqref{eq: N9a eqn5} that
\begin{equation} \label{eq: N9a R1 + R2}
c_1^{-1} + c_2^{-1} = \frac{a_3}{d_3},
\end{equation}
which together with \eqref{eq: N9a eqn8} implies that
\begin{equation} \label{eq: N9a x}
x = \frac{a_3}{d_0d_3}.
\end{equation}
By \eqref{eq: N9a eqn4}, \eqref{eq: N9a R1 + R2}, and \eqref{eq: N9a x}, one implies that
$d_0 d_3 c_1^{-2} - a_3 d_0 c_1^{-1} + a_0 a_3 = 0$,
  $c_2^{-1} = a_0a_3/(d_0d_3c_1^{-1})$, and  $c_2^{-1} = a_3/d_3 - c_1^{-1}$.
Furthermore, $a_3 d_0 - 4a_0 d_3 \geq 0$, and one obtains
$c_1 = 1/\Gamma_1$ and $c_2 = d_0d_3 \Gamma_1/(a_0a_3)$ as in \eqref{eq: N9a element values}, where
$\Gamma_1$ is defined in \eqref{eq: Gamma1}. Substituting \eqref{eq: N9a eqn4} and \eqref{eq: N9a x} into \eqref{eq: N9a eqn3} and \eqref{eq: N9a eqn5} yields
\begin{equation} \label{eq: N9a L2 & C1}
k_2 = \frac{d_0d_3 c_1^{-2}}{a_3(a_1 c_1^{-1} - a_0 k_1^{-1})}, ~~
b_1 = \frac{d_3 c_1^{-2}}{(a_1 c_1^{-1} - a_0 k_1^{-1}) k_1^{-1}}.
\end{equation}
Substituting  \eqref{eq: N9a x}, \eqref{eq: N9a L2 & C1}, $c_1 = 1/\Gamma_1$, and $c_2 = d_0d_3 \Gamma_1/(a_0a_3)$ into
\eqref{eq: N9a eqn6}
yields
$a_0a_3d_0d_2 \Gamma_1 k_1^{-2} + (d_0^2d_3^2 \Gamma_1^4 - a_1a_3d_0d_2 \Gamma_1^2 -
a_0^2a_3^2) k_1^{-1} + a_0a_1a_3^2 \Gamma_1 = 0$, which implies that
$(a_1a_3d_0d_2\Gamma_1^2 - (d_0d_3\Gamma_1^2 - a_0a_3)^2)(a_1a_3d_0d_2\Gamma_1^2
- (d_0d_3\Gamma_1^2 + a_0a_3)^2) \geq 0$ and $k_1 = 1/\Phi_1$  as in \eqref{eq: N9a element values}, where $\Phi_1$ is defined in
\eqref{eq: Phi1}.
Furthermore, substituting   $c_1 = 1/\Gamma_1$, $c_2 = d_0d_3 \Gamma_1/(a_0a_3)$, and $k_1 = 1/\Phi_1$   into \eqref{eq: N9a L2 & C1} implies
the expressions of $k_2$ and $b_1$ as in \eqref{eq: N9a element values}. The assumption that the element values are positive and finite implies that $0 < \Phi_1/\Gamma_1 < a_1/a_0$.
By    the  element values in \eqref{eq: N9a element values} and $x$ in
\eqref{eq: N9a x}, it follows from \eqref{eq: N9a eqn2} and \eqref{eq: N9a eqn7} that
$a_0^2d_0 \Phi_1^4 - 2 a_0a_1d_0 \Gamma_1 \Phi_1^3 +
d_0(a_1^2+a_0a_2)\Gamma_1^2 \Phi_1^2 + (a_0d_0d_3 \Gamma_1^2 - a_1a_2
d_0 \Gamma_1 - a_0^2a_3)\Gamma_1^2 \Phi_1 + a_0a_1a_3 \Gamma_1^3 = 0$ and
$a_0(d_0d_3\Gamma_1^2 - a_0a_3)\Phi_1^3 - (a_1d_0d_3 \Gamma_1^2 + a_0a_3d_1
\Gamma_1 - 2a_0a_1a_3) \Gamma_1 \Phi_1^2 + a_1a_3(d_1\Gamma_1 - a_1)\Gamma_1^2 \Phi_1 - a_0a_3d_3 \Gamma_1^4 = 0$
 hold.

\emph{Sufficiency.} Suppose that Condition~1 of this lemma holds. Let the values of the elements satisfy
\eqref{eq: N9a element values} and $x$ satisfy \eqref{eq: N9a x}.
Since $0 < \Phi_1/\Gamma_1 < a_1/a_0$, $a_3d_0 - 4a_0d_3 \geq 0$,
  $(a_1a_3d_0d_2\Gamma_1^2 - (d_0d_3\Gamma_1^2 - a_0a_3)^2)(a_1a_3d_0d_2\Gamma_1^2
- (d_0d_3\Gamma_1^2 + a_0a_3)^2) \geq 0$, it is clear that the element values as in \eqref{eq: N9a element values} can be positive and finite.
Since $a_0(d_0d_3\Gamma_1^2 - a_0a_3)\Phi_1^3 - (a_1d_0d_3 \Gamma_1^2 + a_0a_3d_1
\Gamma_1 - 2a_0a_1a_3) \Gamma_1 \Phi_1^2 + a_1a_3(d_1\Gamma_1 - a_1)\Gamma_1^2 \Phi_1 - a_0a_3d_3 \Gamma_1^4 = 0$, and
$a_0^2d_0 \Phi_1^4 - 2 a_0a_1d_0 \Gamma_1 \Phi_1^3 +
d_0(a_1^2+a_0a_2)\Gamma_1^2 \Phi_1^2 + (a_0d_0d_3 \Gamma_1^2 - a_1a_2
d_0 \Gamma_1 - a_0^2a_3)\Gamma_1^2 \Phi_1 + a_0a_1a_3 \Gamma_1^3 = 0$,   it can be verified that
conditions~\eqref{eq: N9a eqn1}--\eqref{eq: N9a eqn8} hold.
Therefore, $Z(s)$ is realizable as the configuration in Fig.~\ref{fig: N9a}.

\section{Proof of Lemma~\ref{lemma: realizability condition of N11}}
\label{appendix: N11}

\setcounter{equation}{0}
\renewcommand{\theequation}{\thesection.\arabic{equation}}

\emph{Necessity.}
The impedance   of the configuration in Fig.~\ref{fig: N11} is calculated as $Z(s) = a(s)/d(s)$, where
$a(s) = k_1^{-1}k_2^{-1}b_1b_2 s^4 + c_1^{-1} (k_1^{-1}+k_2^{-1}) b_1 b_2 s^3 + (k_1^{-1}b_2 + k_2^{-1}b_1) s^2 + c_1^{-1}(b_1 + b_2) s + 1$ and $d(s) = c_1^{-1} k_1^{-1} k_2^{-1} b_1 b_2 s^4 + k_1^{-1} k_2^{-1} (b_1 + b_2) s^3 + c_1^{-1} (k_1^{-1} b_1 + k_2^{-1} b_2) s^2 + (k_1^{-1} + k_2^{-1}) s + c_1^{-1}$.
Then, multiplying the numerator and denominator of the bicubic impedance $Z(s)$ in \eqref{eq: three-degree impedance} with a common factor $(Ts + 1)$ where  $T > 0$, it follows that
\begin{subequations}
\begin{align}
c_1^{-1}k_1^{-1}k_2^{-1}b_1b_2 &= x a_3 T,   \label{eq: N11 eqn1}   \\
k_1^{-1}k_2^{-1}(b_1 + b_2)   &=  x (a_2 T + a_3), \label{eq: N11 eqn2}    \\
c_1^{-1}(k_1^{-1}b_1 + k_2^{-1}b_2) &=  x (a_1 T + a_2),   \label{eq: N11 eqn3}    \\
k_1^{-1} + k_2^{-1} &=  x (a_0 T + a_1),   \label{eq: N11 eqn4}  \\
c_1^{-1} &=  x a_0,       \label{eq: N11 eqn5}    \\
k_1^{-1}k_2^{-1}b_1b_2     &=   x d_3 T, \label{eq: N11 eqn6}    \\
c_1^{-1}(k_1^{-1}+k_2^{-1})b_1b_2     &=  x (d_2T+d_3),  \label{eq: N11 eqn7}    \\
k_1^{-1}b_2 + k_2^{-1}b_1    &=  x (d_1 T + d_2),  \label{eq: N11 eqn8}    \\
c_1^{-1}(b_1 + b_2)    &=  x (d_0 T + d_1),   \label{eq: N11 eqn9}    \\
1 &= x d_0,  \label{eq: N11 eqn10}
\end{align}
\end{subequations}
where $x > 0$. It follows from \eqref{eq: N11 eqn1} and \eqref{eq: N11 eqn6} that the expression of $c_1$ can be expressed as in \eqref{eq: N11 element values}. The expression of $x$ can be directly obtained from \eqref{eq: N11 eqn10} as
\begin{equation} \label{eq: N11 x}
x = \frac{1}{d_0}.
\end{equation}
Then, substituting \eqref{eq: N11 x} and the expression of $c_1$ into \eqref{eq: N11 eqn5} implies that $\mathcal{B}_{13} = 0$.
Furthermore, it follows from \eqref{eq: N11 eqn4} that
\begin{equation}  \label{eq: N11 L1plusL2}
k_1^{-1} + k_2^{-1} = \frac{a_0 T + a_1}{d_0},
\end{equation}
which together with \eqref{eq: N11 eqn7}, \eqref{eq: N11 x}, and the expression of $c_1$ implies that
\begin{equation}  \label{eq: N11 C1C2}
b_1 b_2 = \frac{d_3 (d_2 T + d_3)}{a_3 (a_0 T + a_1)}.
\end{equation}
Then, substituting \eqref{eq: N11 x} and \eqref{eq: N11 C1C2} into \eqref{eq: N11 eqn6} yields
\begin{equation}  \label{eq: N11 L1L2}
k_1^{-1} k_2^{-1} = \frac{a_3 T (a_0 T + a_1)}{d_0 (d_2 T + d_3)},
\end{equation}
which together with \eqref{eq: N11 eqn2} and \eqref{eq: N11 x} implies that
\begin{equation}  \label{eq: N11 C1plusC2}
b_1 + b_2 = \frac{(a_2 T + a_3)(d_2 T + d_3)}{a_3 T (a_0 T + a_1)}.
\end{equation}
By \eqref{eq: N11 L1plusL2} and \eqref{eq: N11 L1L2}, it is implied that $k_1 = 1/y_1$ and $k_2 = 1/y_2$ are two positive roots of   equation \eqref{eq: N11 L1 L2 equation} in $y$, whose discriminant must be nonnegative. Therefore, it follows that
$a_0d_2 T^2 + (a_1d_2 - 3a_0d_3) T + a_1d_3 \geq 0$.
Similarly, by \eqref{eq: N11 C1C2} and \eqref{eq: N11 C1plusC2}, it is implied that $b_1 = z_1$ and $b_2 = z_2$ are two positive roots of   equation
\eqref{eq: N11 C1 C2 equation} in $z$, whose discriminant must be nonnegative. Therefore, it follows that $(a_2^2d_2 - 4a_0a_3d_3) T^3 + (a_2^2 d_3 + 2a_2a_3d_2 - 4a_1a_3d_3) T^2 + a_3(a_3d_2 + 2a_2d_3) T + a_3^2d_3 \geq 0$. Substituting \eqref{eq: N11 x}, \eqref{eq: N11 C1plusC2}, and the expression of $c_1$ into   \eqref{eq: N11 eqn9} implies $a_0d_0d_3 T^3 + (a_1d_0d_3 + a_0d_1d_3 - a_2d_0d_2) T^2 +
(a_1 d_1 d_3 - a_2 d_0 d_3 - a_3 d_0 d_2) T - a_3 d_0 d_3 = 0$. Substituting \eqref{eq: N11 eqn5}, \eqref{eq: N11 x}, $k_1 = 1/y_1$,
$k_2 = 1/y_2$, $b_1 = z_1$, and $b_2 = z_2$ into \eqref{eq: N11 eqn3} and \eqref{eq: N11 eqn8} implies $a_1 T + a_2 - a_0(y_1z_1 + y_2z_2) = 0$ and $d_1 T + d_2 - d_0(y_1z_2 + y_2z_1) = 0$.

\emph{Sufficiency.} Suppose that the condition of this lemma holds. Then,  $a_0d_2 T^2 + (a_1d_2 - 3a_0d_3) T + a_1d_3 \geq 0$ can imply that    equation \eqref{eq: N11 L1 L2 equation} in $y$ has two positive roots denoted as $y_1$ and $y_2$, and $(a_2^2d_2 - 4a_0a_3d_3) T^3 + (a_2^2 d_3 + 2a_2a_3d_2 - 4a_1a_3d_3) T^2 + a_3(a_3d_2 + 2a_2d_3) T + a_3^2d_3 \geq 0$ can imply that equation  \eqref{eq: N11 C1 C2 equation} in $z$ has two positive roots denoted as $z_1$ and $z_2$.
Let the element values satisfy \eqref{eq: N11 element values}, and $x$ satisfy \eqref{eq: N11 x}, which implies that the element values can be positive and finite. Since $\mathcal{B}_{13} = 0$,
 $a_0d_0d_3 T^3 + (a_1d_0d_3 + a_0d_1d_3 - a_2d_0d_2) T^2 +
(a_1 d_1 d_3 - a_2 d_0 d_3 - a_3 d_0 d_2) T - a_3 d_0 d_3 = 0$, $a_1 T + a_2 - a_0(y_1z_1 + y_2z_2) = 0$, and $d_1 T + d_2 - d_0(y_1z_2 + y_2z_1) = 0$ imply that \eqref{eq: N11 L1plusL2}--\eqref{eq: N11 C1plusC2} hold,
it can be verified that
conditions~\eqref{eq: N11 eqn1}--\eqref{eq: N11 eqn10} hold.
Therefore, $Z(s)$ is realizable as the configuration in Fig.~\ref{fig: N11}.

\end{appendices}

\vspace{0.2cm}

\newpage

\setcounter{figure}{0}
\setcounter{equation}{0}
\setcounter{section}{0}

\numberwithin{equation}{section}
\numberwithin{figure}{section}
\numberwithin{lemma}{section}
\numberwithin{theorem}{section}
\numberwithin{definition}{section}

\begin{center}
\LARGE Supplementary Material to: Passive Mechanical Realizations of Bicubic Impedances with No More Than Five Elements for Inerter-Based Control Design
\end{center}

\begin{center}
\large Kai~Wang  ~ and ~Michael~Z.~Q.~Chen 
\end{center}

\vspace{0.5cm}

\section{Introduction}
This report presents the proofs of some results in the paper    entitled   ``Passive network realizations of bicubic impedances with no more than five elements for inerter-based control design'' \cite{WC_sub}, which are omitted from the paper for brevity. It is assumed that
the numbering of lemmas, theorems, equations and figures in this report agrees with that in the original paper.

\section{Proof of Lemma~7}

By the principles of duality, frequency inversion, and frequency-inverse duality,
one only needs to prove that  the impedance $Z(s)$ of this lemma is realizable as the configuration in Fig.~2(a), if and only if  $\mathcal{B}_{13} = 0$, $\Delta_1 > 0$, and   $a_0 \mathcal{B}_{33} = a_1 \mathcal{B}_{23} > 0$.

\emph{Necessity.}
Suppose that $Z(s)$ is realizable as the configuration in Fig.~2(a). The impedance   of the configuration in Fig.~2(a) is calculated as
$Z(s) = a(s)/d(s)$, where
$a(s) = c_1^{-1} k_1^{-1} k_2^{-1} b_1 s^3 + c_1^{-1}c_2^{-1}k_1^{-1}b_1 s^2 + c_1^{-1}(k_1^{-1} + k_2^{-1}) s + c_1^{-1} c_2^{-1}$
and
$d(s) = k_1^{-1}k_2^{-1}b_1 s^3 + (c_1^{-1}k_1^{-1} + c_1^{-1}k_2^{-1} + c_2^{-1}k_1^{-1})b_1 s^2 + (c_1^{-1}c_2^{-1}b_1 + k_1^{-1} + k_2^{-1}) s + c_2^{-1}$.
Then, it follows   that
\begin{subequations}
\begin{align}
c_1^{-1} k_1^{-1} k_2^{-1} b_1  &=  x a_3,   \label{eq: N2a eqn1}    \\
c_1^{-1}c_2^{-1}k_1^{-1}b_1  &=  x a_2,   \label{eq: N2a eqn2}    \\
c_1^{-1}(k_1^{-1} + k_2^{-1})  &=  x a_1,   \label{eq: N2a eqn3}    \\
c_1^{-1} c_2^{-1}  &=  x a_0,   \label{eq: N2a eqn4}   \\
k_1^{-1}k_2^{-1}b_1  &=   x d_3, \label{eq: N2a eqn5}    \\
(c_1^{-1}k_1^{-1} + c_1^{-1}k_2^{-1} + c_2^{-1}k_1^{-1})b_1  &=  x d_2,  \label{eq: N2a eqn6}    \\
c_1^{-1}c_2^{-1}b_1 + k_1^{-1} + k_2^{-1}  &=   x d_1,   \label{eq: N2a eqn7}    \\
c_2^{-1}  &=   x d_0,   \label{eq: N2a eqn8}
\end{align}
\end{subequations}
where $x > 0$. Then, it follows from \eqref{eq: N2a eqn1} and \eqref{eq: N2a eqn5} that the expression of $c_1$ can be obtained as in (5), which together with  \eqref{eq: N2a eqn4}  and \eqref{eq: N2a eqn8} can imply that $\mathcal{B}_{13} := a_3 d_0 - a_0 d_3 = 0$.
Substituting  \eqref{eq: N2a eqn2},   \eqref{eq: N2a eqn3}, and the expression of $c_1$ into \eqref{eq: N2a eqn6} yields the expression of $b_1$ as in (5), which
by $b_1 > 0$
implies that $\mathcal{B}_{33} > 0$.
Substituting
\eqref{eq: N2a eqn3},   \eqref{eq: N2a eqn8}, and the expressions of $c_1$ and $b_1$
into \eqref{eq: N2a eqn7} yields
$a_3d_0 \mathcal{B}_{33} - a_1d_3 \mathcal{B}_{23} := a_3 d_0 (a_3 d_2 - a_2 d_3) - a_1d_3 (a_3d_1 - a_1d_3) = 0$, which is further equivalent to $a_0 \mathcal{B}_{33} = a_1 \mathcal{B}_{23}$ by $\mathcal{B}_{13} = 0$.
By   \eqref{eq: N2a eqn2},   \eqref{eq: N2a eqn8}, and the expressions of $c_1$ and $b_1$, one can derive the expression of $k_1$ as in (5),
which together with   \eqref{eq: N2a eqn3}, \eqref{eq: N2a eqn5},
$\mathcal{B}_{13} = 0$, and the expression of $b_1$
implies the expression of $k_2$ as in (5), and
\begin{equation}  \label{eq: N2a x}
x = \frac{a_1 a_2^2 a_3}{d_0 \mathcal{B}_{33} \Delta_1}.
\end{equation}
By $\mathcal{B}_{33} > 0$ and $x > 0$,
it is implied that $\Delta_1 > 0$.
Then, it follows from \eqref{eq: N2a eqn8}
 and \eqref{eq: N2a x} that  the value of $c_2$ can be expressed as in (5).
Therefore, the necessity part is proved.

\emph{Sufficiency.} Suppose that $\mathcal{B}_{13} = 0$, $\Delta_1 > 0$, and   $a_0 \mathcal{B}_{33} = a_1 \mathcal{B}_{23} > 0$.
Let the values of the elements satisfy (5)
 and $x$ satisfy \eqref{eq: N2a x}. Then,
$\Delta_1 > 0$ and $\mathcal{B}_{33} > 0$ can guarantee that the element values are positive and finite. Since $\mathcal{B}_{13} = 0$ and $a_0 \mathcal{B}_{33} = a_1 \mathcal{B}_{23}$,   it can be verified that conditions~\eqref{eq: N2a eqn1}--\eqref{eq: N2a eqn8} hold. Therefore, $Z(s)$ is realizable as the configuration in Fig.~2(a).

\section{Proof of Lemma~8}

By the principles  of duality, frequency inversion, and frequency-inverse duality,
one only needs to prove that  the impedance $Z(s)$ of this lemma is realizable as the configuration in Fig.~3(a), if and only if $\mathcal{B}_{33} \Delta_1 = a_2a_3 \mathcal{B}_{13} > 0$ and $a_2 \mathcal{B}_{33} = a_3 \mathcal{B}_{23} > 0$ (Condition~1 of this lemma).

\emph{Necessity.} Suppose that $Z(s)$ is realizable as the configuration in
Fig.~3(a).
The impedance   of the configuration in Fig.~3(a) is calculated as
$Z(s) = a(s)/d(s)$, where
$a(s) = c_1^{-1} k_1^{-1} k_2^{-1} b_1 s^3 + c_1^{-1}c_2^{-1}k_1^{-1}b_1 s^2 + c_1^{-1}(k_1^{-1}+k_2^{-1}) s + c_1^{-1}c_2^{-1}$ and
$d(s) = k_1^{-1}k_2^{-1}b_1 s^3 + (c_1^{-1}k_2^{-1} + c_2^{-1}k_1^{-1})b_1 s^2 + (c_1^{-1}c_2^{-1}b_1 + k_1^{-1} + k_2^{-1}) s + c_1^{-1} + c_2^{-1}$.
Then, it follows   that
\begin{subequations}
\begin{align}
c_1^{-1} k_1^{-1} k_2^{-1} b_1  &=  x a_3,   \label{eq: N3a eqn1}    \\
c_1^{-1}c_2^{-1}k_1^{-1}b_1  &=  x a_2,   \label{eq: N3a eqn2}    \\
c_1^{-1}(k_1^{-1}+k_2^{-1})  &=  x a_1,   \label{eq: N3a eqn3}    \\
c_1^{-1}c_2^{-1}  &=  x a_0,   \label{eq: N3a eqn4}   \\
k_1^{-1}k_2^{-1}b_1  &=   x d_3, \label{eq: N3a eqn5}    \\
(c_1^{-1}k_2^{-1} + c_2^{-1}k_1^{-1})b_1  &=  x d_2,  \label{eq: N3a eqn6}    \\
c_1^{-1}c_2^{-1}b_1 + k_1^{-1} + k_2^{-1}  &=   x d_1,   \label{eq: N3a eqn7}    \\
c_1^{-1} + c_2^{-1}  &=   x d_0,   \label{eq: N3a eqn8}
\end{align}
\end{subequations}
where $x > 0$. Then, it follows from \eqref{eq: N3a eqn1} and \eqref{eq: N3a eqn5} that
the value of
$c_1$ can be expressed as in (6).
Substituting the expression of $c_1$ into \eqref{eq: N3a eqn4} and \eqref{eq: N3a eqn8}, one can obtain
\begin{equation} \label{eq: N3a x}
x = \frac{a_3^2}{d_3 \mathcal{B}_{13}}
\end{equation}
and the expression of $c_2$ as in (6).
Then, it is implied from $c_2 > 0$
that $\mathcal{B}_{13}:= a_3d_0 - a_0d_3 > 0$. Furthermore, substituting    \eqref{eq: N3a eqn3},   \eqref{eq: N3a x}, and the expressions of $c_1$ and $c_2$
into \eqref{eq: N3a eqn7} yields the expression of $b_1$ as in (6),
which by $b_1 > 0$ implies that $\mathcal{B}_{23} := a_3d_1 - a_1d_3 > 0$.
Similarly, substituting   \eqref{eq: N3a x}  and the expressions of $c_1$, $c_2$, and $b_1$
into \eqref{eq: N3a eqn2} and \eqref{eq: N3a eqn5} implies the expressions of $k_1$ and $k_2$ as in (6).
Based on the element values in (6) and $x$ in \eqref{eq: N3a x}, it can be derived that  \eqref{eq: N3a eqn3} and \eqref{eq: N3a eqn6} are equivalent to
$\mathcal{B}_{33} \Delta_1  = a_2a_3 \mathcal{B}_{13}$
and
$a_2 \mathcal{B}_{33} = a_3 \mathcal{B}_{23}$. Therefore, the necessity part is proved.

\emph{Sufficiency.} Suppose that Condition~1 of this lemma holds.
Let the values of the elements satisfy
(6) and $x$ satisfy \eqref{eq: N3a x}. Then,
$\mathcal{B}_{13} > 0$ and $\mathcal{B}_{23} > 0$ can guarantee that  the element values are positive and finite. Since $a_2 \mathcal{B}_{33} = a_3 \mathcal{B}_{23}$ and
$\Delta_1 \mathcal{B}_{33} = a_2a_3 \mathcal{B}_{13}$, it  can be verified that conditions~\eqref{eq: N3a eqn1}--\eqref{eq: N3a eqn8} hold.
Therefore, $Z(s)$ is realizable as the configuration in Fig.~3(a).

\section{Proof of Lemma~9}

By the principles  of duality, frequency inversion, and frequency-inverse duality,
one only needs to prove that  the impedance $Z(s)$ of this lemma is realizable as the configuration in Fig.~4(a), if and only if
$\mathcal{B}_{13} \Delta_1  = a_1^2 \mathcal{B}_{23} > 0$ and $a_1 \mathcal{B}_{33} = a_3 \mathcal{B}_{13} > 0$ (Condition~1 of this lemma).

\emph{Necessity.} Suppose that $Z(s)$ is realizable as the configuration in
Fig.~4(a).
The impedance   of the configuration in Fig.~4(a) is calculated as
$Z(s) = a(s)/d(s)$, where $a(s) = c_1^{-1}k_1^{-1}k_2^{-1}b_1 s^3 + c_1^{-1}c_2^{-1}(k_1^{-1}+k_2^{-1})b_1 s^2 + c_1^{-1}k_1^{-1} s + c_1^{-1}c_2^{-1}$ and $d(s) = k_1^{-1}k_2^{-1}b_1 s^3 + (c_1^{-1}k_2^{-1} + c_2^{-1}k_1^{-1} + c_2^{-1}k_2^{-1}) b_1 s^2 + (c_1^{-1}c_2^{-1}b_1 + k_1^{-1}) s + c_1^{-1} + c_2^{-1}$.
Then, it follows   that
\begin{subequations}
\begin{align}
c_1^{-1}k_1^{-1}k_2^{-1}b_1    &=  x a_3,   \label{eq: N4a eqn1}    \\
c_1^{-1}c_2^{-1}(k_1^{-1}+k_2^{-1})b_1    &=  x a_2, \label{eq: N4a eqn2}    \\
c_1^{-1}k_1^{-1}    &=  x a_1,   \label{eq: N4a eqn3}    \\
c_1^{-1}c_2^{-1}    &=  x a_0,   \label{eq: N4a eqn4}   \\
k_1^{-1}k_2^{-1}b_1    &=   x d_3, \label{eq: N4a eqn5}    \\
(c_1^{-1}k_2^{-1} + c_2^{-1}k_1^{-1} + c_2^{-1}k_2^{-1}) b_1    &=  x d_2,  \label{eq: N4a eqn6}    \\
c_1^{-1}c_2^{-1}b_1 + k_1^{-1}  &=  x d_1,  \label{eq: N4a eqn7}    \\
c_1^{-1} + c_2^{-1}  &=  x d_0,   \label{eq: N4a eqn8}
\end{align}
\end{subequations}
where $x > 0$. Then, it follows from \eqref{eq: N4a eqn1} and \eqref{eq: N4a eqn5} that the expression of $c_1$ satisfies (7),  which together with \eqref{eq: N4a eqn4} and \eqref{eq: N4a eqn8} can further imply that
\begin{equation}   \label{eq: N4a x}
x = \frac{a_3^2}{d_3 \mathcal{B}_{13}},
\end{equation}
and the expression of $c_2$ satisfies (7). Therefore, it is implied from $x > 0$ that $\mathcal{B}_{13}:= a_3d_0 - a_0d_3 > 0$. Substituting  \eqref{eq: N4a x} and the expression of $c_1$ into \eqref{eq: N4a eqn3} yields  the expression of $k_1$ as in (7). Similarly, substituting \eqref{eq: N4a x} and the expressions of $c_1$, $c_2$, and $k_1$  into \eqref{eq: N4a eqn5} and \eqref{eq: N4a eqn7} can imply that the values of $k_2$ and $b_1$ can be expressed as in (7),   which by $k_2 > 0$ implies  that $\mathcal{B}_{23} := a_3d_1 - a_1d_3 > 0$.
Based on the element values  in  (7) and $x$ in \eqref{eq: N4a x}, it can be derived that
\eqref{eq: N4a eqn2} and \eqref{eq: N4a eqn6} can be equivalent to
$\mathcal{B}_{13} \Delta_1 = a_1^2 \mathcal{B}_{23}$
and
$a_1 \mathcal{B}_{33} = a_3 \mathcal{B}_{13}$. Therefore, the necessity part is proved.

\emph{Sufficiency.} Suppose that Condition~1 of this lemma holds.
Let the values of the elements satisfy
(7) and $x$ satisfy \eqref{eq: N4a x}. Then,
$\mathcal{B}_{13} > 0$ and $\mathcal{B}_{23} > 0$ can guarantee that  the element values are positive and finite. Since
$a_1 \mathcal{B}_{33} = a_3 \mathcal{B}_{13}$ and $\mathcal{B}_{13} \Delta_1  = a_1^2 \mathcal{B}_{23}$,
it  can be verified that conditions~\eqref{eq: N4a eqn1}--\eqref{eq: N4a eqn8} hold.
Therefore, $Z(s)$ is realizable as the configuration in Fig.~4(a).

\section{Proof of Lemma~10}

By the principles  of duality, frequency inversion, and frequency-inverse duality,
one only needs to prove that  the impedance $Z(s)$ of this lemma is realizable as the configuration in Fig.~5(a), if and only if
$a_3^2 d_0^2 \Delta_2 = d_2^2 \mathcal{B}_{12} \mathcal{B}_{13} > 0$ and
$a_0 d_2^2 \mathcal{B}_{12} = a_3 d_0^2 (a_1d_2 - a_3d_0) > 0$ (Condition~1 of this lemma).

\emph{Necessity.} Suppose that $Z(s)$ is realizable as the configuration in
Fig.~5(a).
The impedance   of the configuration in Fig.~5(a) is calculated as
$Z(s) = a(s)/d(s)$, where
$a(s) = c_2^{-1}k_1^{-1}k_2^{-1}b_1 s^3 + (c_1^{-1}c_2^{-1}b_1 + k_1^{-1})k_2^{-1} s^2 + (c_1^{-1}k_2^{-1}+c_2^{-1}k_1^{-1}) s + c_1^{-1}c_2^{-1}$ and $d(s) = k_1^{-1}k_2^{-1}b_1 s^3 + (c_1^{-1}+c_2^{-1})k_2^{-1}b_1 s^2 + (k_1^{-1} + k_2^{-1}) s + c_1^{-1} + c_2^{-1}$.
Then, it follows   that
\begin{subequations}
\begin{align}
c_2^{-1}k_1^{-1}k_2^{-1}b_1    &=  x a_3,   \label{eq: N5a eqn1}    \\
(c_1^{-1}c_2^{-1}b_1 + k_1^{-1})k_2^{-1}   &=  x a_2, \label{eq: N5a eqn2}    \\
c_1^{-1}k_2^{-1}+c_2^{-1}k_1^{-1}  &=  x a_1,   \label{eq: N5a eqn3}    \\
c_1^{-1}c_2^{-1}  &=  x a_0,   \label{eq: N5a eqn4}   \\
k_1^{-1}k_2^{-1}b_1  &=   x d_3, \label{eq: N5a eqn5}    \\
(c_1^{-1}+c_2^{-1})k_2^{-1}b_1  &=  x d_2,  \label{eq: N5a eqn6}    \\
k_1^{-1} + k_2^{-1}  &=  x d_1,  \label{eq: N5a eqn7}    \\
c_1^{-1} + c_2^{-1}  &=  x d_0,   \label{eq: N5a eqn8}
\end{align}
\end{subequations}
where $x > 0$. Then, it follows from \eqref{eq: N5a eqn1} and \eqref{eq: N5a eqn5} that the expression of $c_2$ satisfies (8),  which together with \eqref{eq: N5a eqn4} and \eqref{eq: N5a eqn8} can further imply that
\begin{equation}   \label{eq: N5a x}
x = \frac{a_3^2}{d_3 \mathcal{B}_{13}},
\end{equation}
and the expression of $c_1$ satisfies (8). Therefore, it is implied from $x > 0$
that $\mathcal{B}_{13}:= a_3d_0 - a_0d_3 > 0$. By \eqref{eq: N5a eqn5}, \eqref{eq: N5a eqn6}, and the expressions of $c_1$ and $c_2$, the expression of $k_1$ can be derived as in (8). Then, substituting  \eqref{eq: N5a x} and the expression of $k_1$  into \eqref{eq: N5a eqn7} yields the expression of $k_2$ as in (8), which by
$k_2 > 0$
implies $\Delta_2 > 0$.
Furthermore, substituting \eqref{eq: N5a x} and the expressions of $k_1$ and $k_2$ into \eqref{eq: N5a eqn5} implies the expression of $b_1$ as in (8). Based on the element values  in  (8)
and $x$ in \eqref{eq: N5a x}, it can be derived that
\eqref{eq: N5a eqn2} and \eqref{eq: N5a eqn3} can be equivalent to
$a_3^2 d_0^2 \Delta_2 = d_2^2 \mathcal{B}_{12} \mathcal{B}_{13}$ and
$a_0 d_2^2 \mathcal{B}_{12} = a_3 d_0^2 (a_1d_2 - a_3d_0)$. Therefore, recalling that $\mathcal{B}_{13} > 0$ and $\Delta_2 > 0$,  it is implied that $\mathcal{B}_{12} > 0$. Therefore, the necessity part is proved.

\emph{Sufficiency.} Suppose that Condition~1 of this lemma holds.
Let the values of the elements satisfy
(8) and $x$ satisfy \eqref{eq: N5a x}. Then, it is implied that $\Delta_2 > 0$ and $\mathcal{B}_{13} > 0$, which can guarantee that  the element values are positive and finite. Since
$a_3^2 d_0^2 \Delta_2 = d_2^2 \mathcal{B}_{12} \mathcal{B}_{13}$ and
$a_0 d_2^2 \mathcal{B}_{12} = a_3 d_0^2 (a_1d_2 - a_3d_0)$,
it  can be verified that conditions~\eqref{eq: N5a eqn1}--\eqref{eq: N5a eqn8} hold.
Therefore, $Z(s)$ is realizable as the configuration in Fig.~5(a).

\section{Proof of Lemma~13}

By the principles  of duality, frequency inversion, and frequency-inverse duality,
one only needs to prove that  the impedance $Z(s)$ of this lemma is realizable as the configuration in Fig.~7(a), if and only if
$\mathcal{B}_{13} > 0$, $\mathcal{B}_{23} > 0$, $\mathcal{B}_{13}(a_2d_2 - \mathcal{B}_{23}) - a_2^2 d_0 d_3 = 0$, and
      $\mathcal{B}_{13} \mathcal{B}_{23} \Delta_1   - a_2^2a_3^2d_0^2 = 0$
(Condition~1 of this lemma).

\emph{Necessity.} Suppose that $Z(s)$ is realizable as the configuration in
Fig.~7(a). The impedance   of the configuration in Fig.~7(a) is calculated as $Z(s) = a(s)/d(s)$, where
$a(s) = (c_1^{-1} + c_2^{-1})k_1^{-1}k_2^{-1}b_1 s^3 + c_1^{-1}c_2^{-1}k_1^{-1}b_1 s^2 + (c_1^{-1}+c_2^{-1})(k_1^{-1}+k_2^{-1}) s + c_1^{-1}c_2^{-1}$ and $d(s) = k_1^{-1}k_2^{-1}b_1 s^3 + (c_1^{-1}k_2^{-1}+c_2^{-1}(k_1^{-1}+k_2^{-1}))b_1 s^2 + (c_1^{-1}c_2^{-1}b_1 + k_1^{-1} + k_2^{-1}) s + c_1^{-1}$.
Then, it follows that
\begin{subequations}
\begin{align}
(c_1^{-1} + c_2^{-1})k_1^{-1}k_2^{-1}b_1  &=  x a_3,   \label{eq: N7a eqn1}    \\
c_1^{-1}c_2^{-1}k_1^{-1}b_1  &=  x a_2, \label{eq: N7a eqn2}    \\
(c_1^{-1}+c_2^{-1})(k_1^{-1}+k_2^{-1})  &=  x a_1,   \label{eq: N7a eqn3}    \\
c_1^{-1}c_2^{-1}  &=  x a_0,   \label{eq: N7a eqn4}   \\
k_1^{-1}k_2^{-1}b_1  &=   x d_3, \label{eq: N7a eqn5}    \\
(c_1^{-1}k_2^{-1}+c_2^{-1}(k_1^{-1}+k_2^{-1}))b_1  &=  x d_2,  \label{eq: N7a eqn6}    \\
c_1^{-1}c_2^{-1}b_1 + k_1^{-1} + k_2^{-1}  &=  x d_1,  \label{eq: N7a eqn7}    \\
c_1^{-1}  &=  x d_0,   \label{eq: N7a eqn8}
\end{align}
\end{subequations}
where $x > 0$. Then, it follows from \eqref{eq: N7a eqn4} and \eqref{eq: N7a eqn8} that the expression of $c_2$ can be obtained as in (11). Combining \eqref{eq: N7a eqn1} and \eqref{eq: N7a eqn5}, it is implied that
\begin{equation} \label{eq: N7a R1 + R2}
c_1^{-1} + c_2^{-1} = \frac{a_3}{d_3}.
\end{equation}
Substituting the expression of $c_2$ into \eqref{eq: N7a R1 + R2} yields the expression of $c_1$ as in (11), which together with \eqref{eq: N7a eqn8} implies
\begin{equation}  \label{eq: N7a x}
x = \frac{\mathcal{B}_{13}}{d_0^2d_3}.
\end{equation}
Therefore, it follows from $x > 0$
that $\mathcal{B}_{13} > 0$. Together with  \eqref{eq: N7a x} and the expressions of $c_1$ and $c_2$, the expression of $b_1$ can be obtained from \eqref{eq: N7a eqn3} and \eqref{eq: N7a eqn7} as in (11), which by $b_1 > 0$
implies  $\mathcal{B}_{23} > 0$. Then, substituting \eqref{eq: N7a x} and the expressions of $c_1$, $c_2$, and $b_1$ into \eqref{eq: N7a eqn2} can yield the expression of $k_1$ as in (11), which together with \eqref{eq: N7a eqn5} yields the expression of $k_2$ as in (11). Together with the element values in (11) and $x$ in \eqref{eq: N7a x}, it follows from \eqref{eq: N7a eqn3} and \eqref{eq: N7a eqn6} that   $\mathcal{B}_{13} \mathcal{B}_{23} \Delta_1 - a_2^2a_3^2d_0^2 = 0$ and $\mathcal{B}_{13}(a_2d_2 - \mathcal{B}_{23}) - a_2^2 d_0 d_3 = 0$. Therefore, the necessity part is proved.

\emph{Sufficiency.} Suppose that Condition~1 of this lemma holds. Let the values of the elements satisfy
(11) and $x$ satisfy \eqref{eq: N7a x}. Then, $\mathcal{B}_{13} > 0$ and $\mathcal{B}_{23} > 0$ can guarantee that  the element values are positive and finite.  Since $\mathcal{B}_{13}(a_2d_2 - \mathcal{B}_{23}) - a_2^2 d_0 d_3 = 0$ and $\mathcal{B}_{13} \mathcal{B}_{23} \Delta_1 - a_2^2a_3^2d_0^2 = 0$, it can be verified that
conditions~\eqref{eq: N7a eqn1}--\eqref{eq: N7a eqn8} hold.
Therefore, $Z(s)$ is realizable as the configuration in Fig.~7(a).

\section{Proof of Lemma~16}

By  the principle  of duality,
one only needs to prove that  the impedance $Z(s)$ of this lemma is realizable as the configuration in Fig.~10(a), if and only if
$0 < \Psi_1 < a_1/d_0$, $a_3d_0 - 4a_0d_3 \geq 0$,  $(a_1d_2 - a_3d_0)^2 - 4a_0a_3d_0d_3 \geq 0$, $d_0^2 \Psi_1^3 - a_1d_0 \Psi_1^2 + a_2d_0\Gamma_1 \Psi_1 - a_0a_3\Gamma_1 = 0$, and $d_0^2 \Psi_1^3 + d_0(d_1\Gamma_1 - 2a_1) \Psi_1^2 - a_1(d_1 \Gamma_1 - a_1) \Psi_1 + a_0d_3 \Gamma_1^2 = 0$ (Condition~1 of this lemma), where
$\Gamma_1$ and $\Psi_1$ are defined in (15) and (17), respectively.

\emph{Necessity.} Suppose that $Z(s)$ is realizable as the configuration in
Fig.~10(a). The impedance   of the configuration in Fig.~10(a) is calculated as $Z(s) = a(s)/d(s)$, where
$a(s) = (c_1^{-1} + c_2^{-1})k_1^{-1}k_2^{-1}b_1 s^3 + (c_1^{-1}c_2^{-1}b_1 + k_1^{-1})k_2^{-1} s^2 + ((c_1^{-1}+c_2^{-1})k_1^{-1} + c_1^{-1}k_2^{-1}) s + c_1^{-1}c_2^{-1}$ and $d(s) = k_1^{-1}k_2^{-1}b_1 s^3 + ((c_1^{-1}+c_2^{-1})k_1^{-1}+c_2^{-1}k_2^{-1})b_1 s^2 +
(c_1^{-1}c_2^{-1}b_1 + k_2^{-1}) s + c_1^{-1} + c_2^{-1}$.  Then, it follows that
\begin{subequations}
\begin{align}
(c_1^{-1} + c_2^{-1})k_1^{-1}k_2^{-1}b_1     &=  x a_3,   \label{eq: N10a eqn1}    \\
(c_1^{-1}c_2^{-1}b_1 + k_1^{-1})k_2^{-1}  &=  x a_2,   \label{eq: N10a eqn2}    \\
(c_1^{-1}+c_2^{-1})k_1^{-1} + c_1^{-1}k_2^{-1} &=  x a_1,   \label{eq: N10a eqn3}    \\
c_1^{-1}c_2^{-1}  &=  x a_0,   \label{eq: N10a eqn4}   \\
k_1^{-1}k_2^{-1}b_1  &=  x d_3,  \label{eq: N10a eqn5}    \\
((c_1^{-1}+c_2^{-1})k_1^{-1}+c_2^{-1}k_2^{-1})b_1  &=  x d_2,  \label{eq: N10a eqn6}    \\
c_1^{-1}c_2^{-1}b_1 + k_2^{-1}  &=  x d_1,   \label{eq: N10a eqn7}    \\
c_1^{-1} + c_2^{-1}  &=      x d_0,  \label{eq: N10a eqn8}
\end{align}
\end{subequations}
where $x > 0$. It follows from \eqref{eq: N10a eqn1} and \eqref{eq: N10a eqn5} that
\begin{equation} \label{eq: N10a R1 + R2}
c_1^{-1} + c_2^{-1} = \frac{a_3}{d_3},
\end{equation}
which together with \eqref{eq: N10a eqn8} implies that
\begin{equation} \label{eq: N10a x}
x = \frac{a_3}{d_0d_3}.
\end{equation}
By \eqref{eq: N10a eqn4}, \eqref{eq: N10a R1 + R2}, and \eqref{eq: N10a x}, it is implied that
$d_0d_3 c_1^{-2} - a_3d_0 c_1^{-1} + a_0a_3 = 0$ and $c_2^{-1} = a_0a_3/(d_0d_3 c_1^{-1}) = a_3/d_3 -  c_1^{-1}$.
Furthermore, $a_3 d_0 - 4a_0 d_3 \geq 0$, and one obtains
$c_1 = 1/\Gamma_1$ and $c_2 = d_0d_3 \Gamma_1/(a_0a_3)$  as in (19), where
$\Gamma_1$ is defined in (15). Together with
\eqref{eq: N10a R1 + R2}, and \eqref{eq: N10a x}, it follows from  \eqref{eq: N10a eqn3}
and \eqref{eq: N10a eqn5} that
\begin{equation}   \label{eq: N10a L2 & C1}
k_2 = \frac{d_0d_3 c_1^{-1}}{a_3 (a_1 - d_0 k_1^{-1})}, ~~
b_1 = \frac{d_3 c_1^{-1}}{(a_1 - d_0 k_1^{-1}) k_1^{-1}}.
\end{equation}
Then, substituting \eqref{eq: N10a R1 + R2}--\eqref{eq: N10a L2 & C1} into \eqref{eq: N10a eqn6} implies
$d_0d_2 k_1^{-2} + (d_0d_3 c_1^{-1} - d_0d_3 c_2^{-1} - a_1 d_2) k_1^{-1} + a_1d_3 c_2^{-1} = 0$,
which together with \eqref{eq: N10a eqn4}, \eqref{eq: N10a R1 + R2}, and \eqref{eq: N10a x} implies that $(a_1d_2 - a_3d_0)^2 - 4a_0a_3d_0d_3 \geq 0$ and $k_1 = 1/\Psi_1$. By \eqref{eq: N10a L2 & C1}, the expressions of $k_2$ and $b_1$  can be directly obtained as in (19), which by $k_1 > 0$ and $k_2 > 0$
implies that $0 < \Psi_1 < a_1/d_0$. By $c_1 = 1/\Gamma_1$, $k_1 = 1/\Psi_1$,  \eqref{eq: N10a eqn4},  \eqref{eq: N10a x}, and
\eqref{eq: N10a L2 & C1}, it follows from \eqref{eq: N10a eqn2} and \eqref{eq: N10a eqn7} that $d_0^2 \Psi_1^3 - a_1d_0 \Psi_1^2 + a_2d_0\Gamma_1 \Psi_1 - a_0a_3\Gamma_1 = 0$, and $d_0^2 \Psi_1^3 + d_0(d_1\Gamma_1 - 2a_1) \Psi_1^2 - a_1(d_1 \Gamma_1 - a_1) \Psi_1 + a_0d_3 \Gamma_1^2 = 0$ hold. Therefore, the necessity part is proved.

\emph{Sufficiency.} Suppose that Condition~1 of this lemma holds. Let the values of the elements satisfy
(19) and $x$ satisfy \eqref{eq: N10a x}. Since $0 < \Psi_1 < a_1/d_0$, $a_3d_0 - 4a_0d_3 \geq 0$, and $(a_1d_2 - a_3d_0)^2 - 4a_0a_3d_0d_3 \geq 0$, it is clear that the element values can be positive and finite.
Since  $d_0^2 \Psi_1^3 - a_1d_0 \Psi_1^2 + a_2d_0\Gamma_1 \Psi_1 - a_0a_3\Gamma_1 = 0$, and $d_0^2 \Psi_1^3 + d_0(d_1\Gamma_1 - 2a_1) \Psi_1^2 - a_1(d_1 \Gamma_1 - a_1) \Psi_1 + a_0d_3 \Gamma_1^2 = 0$,  it can be verified that
conditions~\eqref{eq: N10a eqn1}--\eqref{eq: N10a eqn8} hold.
Therefore, $Z(s)$ is realizable as the configuration in Fig.~10(a).

\section{Conclusion}

In this report, the proofs of some results in the original
paper \cite{WC_sub} have been presented,  which are omitted from the paper for brevity.


\end{document}